\newtheorem{theorem}{Theorem}[section]
\newtheorem{lemma}[theorem]{Lemma}
\newtheorem{proposition}[theorem]{Proposition}
\newtheorem{corollary}[theorem]{Corollary}
\theoremstyle{definition}     
\newtheorem{definition}[theorem]{Definition}
\newtheorem{example}[theorem]{Example}
\newenvironment{thm-A}
{{\vspace{0.1cm} \noindent \bf Main Theorem$\,$}\it}{\vspace{0.1cm}}
\newenvironment{thm-B}
{{\vspace{0.1cm} \noindent \bf Theorem B.$\,$}\it}{\vspace{0.1cm}}
\theoremstyle{remark}
\newtheorem{remark}[theorem]{Remark}
\newcommand{\calA}{\mathcal{A}}
\newcommand{\calE}{\mathcal{E}}
\newcommand{\frakm}{\mathfrak{m}}
\newcommand{\frako}{\mathfrak{o}}
\newcommand{\calL}{\mathcal{L}}
\newcommand{\calN}{\mathcal{N}}
\newcommand{\calO}{\mathcal{O}}
\newcommand{\calP}{\mathcal{P}}
\newcommand{\calQ}{\mathcal{Q}}
\newcommand{\calR}{\mathcal{R}}
\newcommand{\be}{\mathcal{\begin{equation}}}
\newcommand{\ee}{\mathcal{\end{equation}}}
\newcommand{\bbE}{\mathbb{E}}
\newcommand{\bbH}{\mathbb{H}}
\newcommand{\bbK}{\mathbb{K}}
\newcommand{\bbF}{\mathbb{F}}
\newcommand{\bbC}{\mathbb{C}}
\newcommand{\bbP}{\mathbb{P}}
\newcommand{\bbQ}{\mathbb{Q}}
\newcommand{\bbR}{\mathbb{R}}
\newcommand{\bbZ}{\mathbb{Z}}
\newcommand{\bfe}{\mathbf{e}}
\newcommand{\frakd}{\mathfrak{d}}
\newcommand{\fraka}{\mathfrak{a}}
\newcommand{\frakh}{\mathfrak{h}}
\newcommand{\frakr}{\mathfrak{r}}
\newcommand{\bfF}{\mathbf{F}}
\newcommand{\Or}{\textup{O}}
\newcommand{\SL}{\textup{SL}}
\newcommand{\SO}{\textup{SO}}
\newcommand{\Spin}{\textup{Spin}}
\newcommand{\Spec}{\textup{Spec}}
\newcommand{\Pic}{\textup{Pic}}
\newcommand{\PGL}{\textup{PGL}}
\newcommand{\id}{\textup{id}}
\newcommand{\Jac}{\textup{Jac}}
\newcommand{\rank}{\textup{rank}}
\newcommand{\cha}{\textup{char}}
\newcommand{\la}{\langle}
\newcommand{\ra}{\rangle}
\newcommand{\half}{\tfrac{1}{2}}
\newcommand{\Aut}{\textup{Aut}}
\newcommand{\tr}{\textup{tr}}
\newcommand{\calHa}{{\sf{Halp}}}
\newcommand{\gen}{\textup{gen}}
\newcommand{\fib}{\textup{fib}}
\newcommand{\beq}{\begin{equation}}
\newcommand{\eeq}{\end{equation}}
\newcommand{\bfmu}{\boldsymbol{\mu}}
\newcommand{\bfSO}{\mathbf{SO}}
\newcommand{\bfSpin}{\mathbf{Spin}}
\newcommand{\Num}{\textup{Num}}
\newcommand{\bl}{\textup{b}}
\date{}
\title[Automorphisms and Coble surfaces]{Rational surfaces with a large group of automorphisms}
\author{Serge Cantat and Igor Dolgachev}
\address{IRMAR, UMR 6625 du CNRS et Universit\'e de Rennes 1, B\^at. 22-23 du Campus de Beaulieu, F-35042 Rennes cedex; DMA, UMR 8553 du CNRS, \'Ecole Normale Sup\'erieure de Paris, 45 rue d'Ulm, F-75230 Paris cedex 05}
\address{Department of Mathematics, University of Michigan, 525 E. University Av., Ann Arbor, Mi, 49109}
\email{serge.cantat@univ-rennes1.fr}
\email{idolga@umich.edu}
\begin{document}
\maketitle

\begin{abstract} 
We classify rational surfaces $X$ for which the image of the automorphisms
group $\Aut(X)$ in the group of linear transformations of  the Picard group $\Pic(X)$ 
is the largest possible. This answers a question 
raised by Arthur Coble in 1928, and can be rephrased in terms of periodic 
orbits of  birational actions of  infinite Coxeter groups. 
 \end{abstract}

\setcounter{tocdepth}{1}
\tableofcontents

\section{Introduction} 

Let $\bbK$ be an algebraically closed field, and $X$ be a projective surface defined over $\bbK$.
The group of automorphisms $\Aut(X)$ acts  on the N\'eron-Severi group   of $X$. This action 
preserves the intersection form and the canonical class $K_X$, and therefore provides a morphism from 
$\Aut(X)$ to the group of integral isometries $\Or(K_X^\perp)$ of the orthogonal complement $K_X^\perp$. When $X$ is rational, the image
satisfies further constraints: It is contained in an explicit Coxeter subgroup $W_X$ of $\Or(K_X^\perp)$, and $W_X$ has infinite index in $\Or(K_X^\perp)$ as soon as the rank $\rho(X)$ of the N\'eron-Severi group of $X$ exceeds $11$.

A natural problem is to describe all projective surfaces 
$X$ for which $\Aut(X)$ is infinite  and its image in this orthogonal or Coxeter group is of finite index. 
When $\bbK$ is the field of complex numbers, the problem asks for a classification of 
 complex projective surfaces with maximal possible groups of isotopy classes of holomorphic 
diffeomorphisms.

We solve this problem  when $X$ is a rational surface. It turns out that this is the most interesting and difficult case. In Section \ref{par:NRS} we briefly discuss other types of surfaces; one can treat them by more or less standard arguments. 

\subsection{Automorphisms of rational surfaces} Let $X$ be a rational surface  defined over~$\bbK$. The N\'eron-Severi group of $X$ coincides with the  Picard group $\Pic(X)$; its rank $\rho(X)$ is the Picard number of $X$.  
We denote by $\Aut(X)^*$ the image of $\Aut(X)$ in the orthogonal group $\Or(\Pic(X))$. 
There are two alternative possibilities for $\Aut(X)$ to be infinite. 

The first  occurs when
the kernel $\Aut(X)^0$ of the action of $\Aut(X)$ on $\Pic(X)$  is infinite. In this case, $\Aut(X)^0$ is a linear algebraic group of positive dimension and $\Aut(X)^*$ is a finite group (see \cite{Harbourne}).
All such examples are easy to describe because the surface $X$ is obtained from a minimal rational surface  by a sequence of $\Aut(X)^0$-equivariant  blow-ups. Toric surfaces provide examples of this kind.  

In the second case, the group $\Aut(X)^*$ is infinite, $\Aut(X)^0$ is finite, and then  
$X$ is obtained from the projective plane $\bbP^2$ by blowing up a sequence of points 
$p_1, \ldots, p_n$, with $n\geq 9$ (see \cite{Nagata:II}). 
The existence of such an infinite  group $\Aut(X)^*$ imposes drastic constraints on the 
point set $\calP=\{p_1, \ldots, p_n\}$ and leads to nice geometric properties of this set. 
There are classical examples of this kind as well as very recent constructions 
(see \cite{Dolgachev-Ortland:Ast}, \cite{Bedford-Kim:2006},
\cite{McMullen:2007},  \cite{Takenawa:2001}). Our goal is to classify point sets $\calP$ for which
the group $\Aut(X)^*$ is the largest possible, in a sense which we now make more precise. 

\subsection{The hyperbolic lattice}\label{par:Minkowski-Lattice} Let $\bbZ^{1,n}$ denote the standard odd unimodular lattice of  signature~$(1,n)$.
It is generated by an orthogonal basis $(\bfe_0, \bfe_1, \ldots, \bfe_n)$ with
\[
\bfe_0^2=1, \quad {\text{and}}\quad \bfe_i^2=-1 \quad {\text{for}}\quad i\geq 1.
\]
The orthogonal complement of the vector  
\[
k_n = -3\bfe_0+(\bfe_1+\cdots +\bfe_n)
\]
is a sublattice $\bbE_n\subset \bbZ^{1,n}$. A basis of $\bbE_n$ is formed by the vectors 
\[
\boldsymbol{\alpha}_0 = \bfe_0-\bfe_1-\bfe_2-\bfe_3, \; {\text{and}} \;\, \boldsymbol{\alpha}_i = \bfe_i-\bfe_{i+1}, \ i = 1,\ldots, n-1.
\]
The intersection matrix  $(\boldsymbol{\alpha}_i\cdot \boldsymbol{\alpha}_j)$ is equal to $ \Gamma_{n} - 2I_{n}$, where $\Gamma_{n}$ is the incidence matrix of the graph
$T_{2,3,n-3}$ from Figure \ref{fig:Dynkin}. In particular, each class $\boldsymbol{\alpha}_i$ has self-intersection $-2$ and
determines an involutive isometry of $\bbZ^{1,n}$ by
\[
\boldsymbol{s}_i:x\mapsto x+ (x\cdot \boldsymbol{\alpha}_i) \boldsymbol{\alpha}_i.
\]
By definition, these involutions generate the Coxeter (or Weyl)  group $W_n$. 

\begin{figure}[ht]
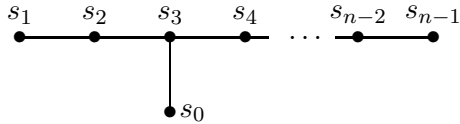

\xy
(-35,0)*{};
@={(0,0),(10,0),(20,0),(30,0),(45,0),(55,0),(20,-10)}@@{*{\bullet}};
(0,0)*{};(33,0)*{}**\dir{-};(42,0)*{};(55,0)*{}**\dir{-};
(20,0)*{};(20,-10)*{}**\dir{-};(38,0)*{\ldots};
(0,3)*{s_1}; (10,3)*{s_{2}}; (20,3)*{s_3};(30,3)*{s_4}; (45,3)*{s_{n-2}};(55,3)*{s_{n-1}};(23,-10)*{s_0};
\endxy
\caption{{\sf{Coxeter-Dynkin diagram of type $T_{2,3,n-3}$}}}\label{fig:Dynkin}
\end{figure}

\subsection{Automorphisms and Coxeter groups}\label{par:WXWn}

>From now on, $X$ is a rational surface for which $\Aut(X)^*$ is infinite. We write $X$ as the blow-up of $\bbP^2$ at $n$ points 
$p_1$, ..., $p_n$ with $n\geq 9$; some
of them can be infinitely near points and, by convention, $j\geq i$ if $p_j$ is infinitely near   $p_i$.  
We now describe known constraints on the structure of the group $\Aut(X)^*$. 

A basis $(e_0,\ldots,e_n)$ of $\Pic(X)$ is obtained by taking for $e_0$ the class of the total transform of a line in $\bbP^2$, and for $e_i$, $1\leq i\leq n,$ the class of the total transform of the exceptional divisor obtained by blowing up $p_i$; in particular, the 
Picard number $\rho(X)$  is equal to $n+1$.
This basis
is orthogonal with respect to the intersection form: $e_0^2=1$, $e_i^2=-1$ for $i\geq 1$, and $e_i\cdot e_j=0$ if $i\neq j$. We call such a basis of $\Pic(X)$ a {\bf geometric basis}. A geometric basis makes  $\Pic(X)$  isometric to the lattice
$\bbZ^{1,n}$, by an isometry which maps  $e_i$ to $\bfe_i$. Under this isomorphism, the canonical class 
\[
K_X=-3e_0+e_1+\cdots + e_n
\]
is mapped to the element  $k_n\in \bbZ^{1,n}$. We denote by $(\alpha_i)$ the basis of $K_X^\perp$ corresponding
to $(\boldsymbol{\alpha}_i)$ under this isomorphism, i.e. 
\beq\label{basis}
\alpha_0= e_0-e_1 -e_2 -e_3,\   \alpha_1 = e_1-e_2,\ \ldots, \ \alpha_{n-1} = e_{n-1} - e_{n},
\eeq
and by $s_i$ the involutive  isometry of $\Pic(X)$ which is conjugate to $\boldsymbol{s}_i$. 
By definition, the group $W_X$ is the group of isometries of $\Pic(X)$ generated by these $n$ involutions; 
thus, $W_X$ is isomorphic to the Coxeter group $W_n$. It is known that the group $W_X$ does not depend on a choice of a geometric basis (see \cite{Dolgachev:2008}, Theorem~5.2, page 27). 

The group $\Aut(X)^*$ acts by isometries on $\Pic(X)$ and preserves the canonical class.
According to Kantor-Nagata's theorem (see \cite{Kantor}, Theorem XXXIII, \cite{Nagata:II}, p. 283, or \cite{Dolgachev:2008}, Theorem 5.2), 
the group $\Aut(X)^*$ is contained in  $W_X$. Thus, we get a series of  inclusions 
\[
\Aut(X)^*\subset W_X \subset \Or(\Pic(X); K_X) \subset \Or(\Pic(X)),
\]
where  $\Or(\Pic(X))$ is the orthogonal group of $\Pic(X)$ with respect to the intersection form, and $\Or(\Pic(X); K_X)$ is the stabilizer of the canonical class $K_X$.

\subsection{Cremona special point sets}
When $n\leq 8$, $W_n$ is a finite group.  We say that  the point set $\calP:=\{p_1, ..., p_n\}$ 
is {\bf{Cremona special}} if $n\geq 9$ and the group $\Aut(X)^*$ has finite index in $W_X$.\footnote{ Coble called such subsets \emph{special}, so we somewhat deviate from his terminology.} 
A rational surface obtained by blowing up a Cremona special set will be called Cremona special. In this sense,   Cremona special surfaces with fixed Picard number are rational surfaces  with largest possible discrete automorphism groups among all rational surfaces with the same rank of the Picard group.  

Our goal is to classify Cremona special point sets, a problem that is already mentioned by Arthur Coble
in his book \cite{Coble:Book}, p. 278 (see also \cite{Dolgachev-Ortland:Ast} and  \cite{Harbourne}).

Two kinds of such sets are known since the beginning of the last century. They are general Halphen sets of $9$ points and general Coble sets of $10$ points (see \cite{Coble:Book}).  Brian Harbourne showed in \cite{Harbourne2} that,  in characteristic $p > 0$, and for any integer $n\geq 9$, a general set of $n$ nonsingular points on an irreducible cuspidal cubic curve is Cremona special; for this, he employed the fact that all such points are $p$-torsion points in the group law on the set of  nonsingular points on the cubic curve. 
When $n=9$, Harbourne sets are particular
cases of Halphen sets. 

We discuss the geometry of Halphen and Coble point sets  in Sections \ref{Halphen} and \ref{Coble}, prove that the general ones  are indeed Cremona special, and describe precisely what  ``general" means in this context. By definition, the point set is {\bf unnodal} if its blow-up  does not contain  smooth rational curves with self-intersection  equal to $-2$ (also called nodal or $(-2)$-curves). This terminology is borrowed from the theory of Enriques surfaces, where it is known that the isomorphism classes of unnodal surfaces form an open subset in the moduli space. A surface obtained by blowing up an unnodal set is called unnodal. We show that unnodal sets form an open Zariski  subset in the variety of point sets defining Halphen and Coble surfaces, and that Cremona special Halphen and Coble surfaces are exactly the unnodal ones. 

Harbourne examples are defined at the beginning of Section \ref{par:GC} and are discussed in Section \ref{par:Harbourne}. 

Then, our main result shows that the examples constructed by Halphen, Coble, and Harbourne 
exhaust all possibilities of Cremona special point sets. As a corollary, if a point set $\calP$ is Cremona special, then it is unnodal.

\begin{thm-A}\label{thm:Main}  
Let $\bbK$ be an algebraically closed field.
Let $\calP$ be a Cremona special point set in  $\bbP^2_\bbK$. Then $\calP$ is unnodal and one of the following cases occurs 
\begin{itemize}
\item  $n = 9$ and  $\calP$ is a  Halphen set;
\item $n = 10$ and $\calP$ is a  Coble set;
\item $n \ge 10$,  $\cha(\bbK) > 0$, and $\calP$ is a Harbourne set. 
\end{itemize}
Conversely, any such unnodal set is Cremona special.
\end{thm-A}

As a corollary, {\sl{if a rational surface $X$ is Cremona special, then $-K_X$ or $-2K_X$ is effective}}. 

\begin{remark}\label{Rem:Intro}a.-- As explained in Section \ref{Section:GeneralFields}, there is a stronger version of this theorem which does not 
assume that $\bbK$ is algebraically closed, but this requires a careful definition of  Cremona special point sets. Non rational surfaces are dealt with 
in Section \ref{par:NRS}.

1.1.$\,$ b.-- When $W_X$ is infinite, it is Zariski dense in the real algebraic group $\Or(K_X^\perp\otimes \bbR)$. Thus, a natural question is the following. If $X$ is a rational 
surface and $\Aut(X)^*$ is infinite and  Zariski dense in $\Or(K_X^\perp\otimes \bbR)$, does it follow that $X$ is Cremona special ? 
In other words, is it possible to generalize our Main Theorem under the weaker assumption that $\Aut(X)^*$ is  infinite and is Zariski 
dense in $\Or(K_X^\perp \otimes \bbR)$ ? Since $W_9$ contains a finite index, free abelian group of rank $8$, every Zariski dense
subgroup of $W_9$ has finite index. Thus, the problem concerns rational surfaces with Picard number at least $10$. 

1.1.$\,$ c.-- There is a notion of Cremona special point sets in projective spaces of higher dimension and in their Cartesian products. 
Interesting examples of such sets are known 
(see \cite{Coble:Book},  \cite{Dolgachev-Ortland:Ast}, \cite{Dolgachev:Hannover}). 
Unfortunately, the methods of this paper are specific to dimension $2$ and do not extend to the higher-dimensional case.
\end{remark}

\subsection{An action of $W_n$ on point sets and its periodic orbits}\label{1.5}
Consider the variety $(\bbP^2)^n$ and the diagonal action of $\PGL_3$ on it.   Consider the GIT-quotient  ${\sf{P}}^2_n$ of the action. It turns out that the group $W_n$ acts on ${\sf{P}}^2_n$ by birational transformations; this {\bf{Cremona action}}
is described in chapter VI of \cite{Dolgachev-Ortland:Ast}.

Let $\Gamma$ be a subgroup of $W_n$. Let $(p_1, \ldots, p_n)$ be an ordered stable set of distinct points representing a  point $\sf{p}\in {\sf{P}}^2_n$.
  Let $X_{\sf{p}}$
be the surface obtained by blowing up the projective plane at $p_1, \ldots, p_n$; its isomorphism class  depends only on $\sf{p}$. The group $\Pic(X_{\sf{p}})$ is isomorphic to $\bbZ^{1,n}$, with an isomorphism depending only on $\sf{p}$; 
 we fix such an isomorphism, and the corresponding isomorphism
between $W_X$ and $W_n$.

If ${\sf{p}}$ is contained in the domain of definition of   $\gamma$ and $\gamma({\sf{p}})={\sf{p}}$ for all $\gamma$
in $\Gamma$, then there exists a subgroup $\Gamma'\subset \Aut(X_{\sf{p}})$ such that the action of $\Gamma'$ 
on $\Pic(X_{\sf{p}})$ and the identification $W_n\sim W_{X_{\sf{p}}}$ provide an isomorphism $\Gamma' \to \Gamma$. In other 
words, {\sl{points ${\sf{p}}\in {\sf{P}}^2_n$ which are fixed by the group $\Gamma$ correspond to rational surfaces on which $\Gamma$ is
represented by a subgroup of $\Aut(X_{\sf{p}})$}}.

Thus,  our Main Theorem {\sl{classifies periodic orbits of the group $W_n$, for $n\geq 9$}}, i.e. for infinite Coxeter
groups $W_n$. This provides a dynamical interpretation of the Main Theorem in terms of birational actions
of Coxeter groups.

\begin{remark} There are other nice examples of algebraic dynamical systems for which periodic orbits are 
related to the construction of interesting geometric objects. 

One of them is given by Thurston's pull back map. If $F:S^2\to S^2$ 
is a (topological) orientation preserving branched covering map of the sphere $S^2$ with a finite post-critical set\footnote{The post-critical set is the union of the images of the set of critical points of $f$ under positive iterations of $f$.} $\calP_F$ of cardinality $n$,  
one can ask whether $F$ is equivalent to a holomorphic endomorphism
 $f$ of the Riemann sphere $\bbP^1(\bbC)$, in the sense that 
 \[
 F=\phi \circ f \circ \psi
 \]
 where $\phi$ and $\psi$ are homeomorphisms which are isotopic relatively to $\calP_F$. The   map $\sigma_F$ defined by Thurston acts on the Teichm\"uller
 space of $S^2$ with $n$-marked points;  fixed points of $\sigma_F$ correspond to holomorphic structures on the sphere
 for which $F$ is realized by an endomorphism $f$. This situation is similar to the one studied here, with $\sigma_F$ in
 place of $W_n$ and the Teichm\"uller space replacing ${\sf{P}}^2_n$. We refer to \cite{Douady-Hubbard} for a precise description 
 of Thurston's construction.
 
 Another similar situation, with the mapping class group of a surface $\Sigma$ (in place of $W_n$) acting on the character variety
 of the fundamental group $\pi_1(\Sigma)$ (in place of ${\sf{P}}^2_n$)  is related to hyperbolic structures on three dimensional manifolds, 
 and to algebraic solutions of Painlev\'e sixth equation (see \cite{Cantat:Duke} and references therein).
 \end{remark}
 
\subsection{Acknowledgement} Thanks to A. Chambert-Loir, J.-L. Colliot-Th\'el\`ene, Y. de Cornulier, M. W. Davis, M. H. Gizatullin, D. Harari, J. Keum, S. Kond\={o}, C.T. McMullen, V. Nikulin, and G. Prasad for interesting discussions on the topics of this paper. 
We thank the thorough referees for their careful reading and their comments which allowed us to clarify  the exposition and to correct some of the arguments of the previous version of this paper.


\section{Halphen surfaces}\label{Halphen}


In this section, we describe Halphen surfaces, Halphen pencils of genus $1$ curves, and
their associated point sets. We then show that unnodal Halphen point sets are Cremona
special. Most results in this section are known to experts, but may be hard to find in the literature, 
and will be used in the following sections.

We assume some familiarity with the theory of elliptic fibrations over fields of arbitrary characteristic and refer
to \cite{Dolgachev-Cossec:Book}, Chapter V, for this topic.

\subsection{Halphen surfaces of index m}\label{par:Halphen} 

By definition, a {\bf{ $(-n)$-curve}} on a smooth projective surface $X$ is a smooth rational curve with self-intersection $-n$. The genus formula shows the following. 

\begin{lemma}\label{-ncurves}
Let $X$ be a smooth projective surface. Let $n$ and $l$ be positive integers.
\begin{enumerate}
\item Assume $-K_X$ is nef. If $E$ is a $(-n)$-curve, 
then $n=1$, or $n=2$ and $E\cdot K_X=0$.
\item Assume that the linear system $\vert -lK_X\vert $ contains a reduced, irreducible curve $C$ with $C^2<0$. If
$E$ is a $(-n)$-curve, then $n=1$, or $n=2$ and $E\cdot C=0$, or $E=C$.
\end{enumerate}
\end{lemma}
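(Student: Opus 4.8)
The plan is to derive both statements from the adjunction (genus) formula applied to the smooth rational curve $E$. Since $E$ is smooth and rational, $g(E)=0$, so the formula $2g(E)-2 = E^2 + E\cdot K_X$ combined with $E^2=-n$ yields the single numerical identity
\[
E\cdot K_X = n-2.
\]
Everything then follows by pairing $E$ against a suitable nef or effective class and invoking nonnegativity of intersection numbers; no deeper input is needed.

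For part (1), I would observe that $-K_X$ nef means precisely $K_X\cdot D\leq 0$ for every effective divisor $D$. Applying this to the irreducible curve $E$ gives $E\cdot K_X\leq 0$, hence $n-2\leq 0$. Since $n$ is a positive integer, this forces $n\in\{1,2\}$, and the boundary case $n=2$ is exactly the statement $E\cdot K_X=0$.

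For part (2), I would first set aside the possibility $E=C$, which is the third alternative and requires no further argument. So assume $E\neq C$. Then $E$ and $C$ are two distinct irreducible curves on a smooth surface, so their intersection number is nonnegative, $E\cdot C\geq 0$. Because $C\in\vert -lK_X\vert$, the curve $C$ is linearly, hence numerically, equivalent to $-lK_X$, so
\[
E\cdot C = -l\,(E\cdot K_X) = -l(n-2) = l(2-n).
\]
With $l>0$ and $E\cdot C\geq 0$ this gives $n\leq 2$, i.e. $n\in\{1,2\}$; and when $n=2$ the same computation yields $E\cdot C=0$, which is the second alternative.

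The whole argument reduces to a two-line computation once the genus formula is in hand. I do not expect a genuine obstacle; the only point demanding care is the trichotomy in part (2), where one must peel off the case $E=C$ \emph{before} invoking $E\cdot C\geq 0$, since that inequality is valid only for distinct irreducible curves.
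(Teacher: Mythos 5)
Your argument is correct and is precisely the one the paper intends: the paper gives no written proof beyond the remark ``The genus formula shows the following,'' and your derivation of $E\cdot K_X=n-2$ from adjunction, followed by pairing against the nef class $-K_X$ in part (1) and against the effective curve $C\sim -lK_X$ (after setting aside $E=C$) in part (2), is exactly that genus-formula argument. Your cautionary note about peeling off the case $E=C$ before using $E\cdot C\geq 0$ is well taken and correctly handled.
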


A smooth rational projective surface $X$ is a {\bf{Halphen surface}} if there exists an integer $m > 0$ 
such that the linear system $| - m K_X|$ is of dimension $1$, has no fixed component, and has no base point.
The {\bf{index}} of a Halphen surface is the smallest possible value for such a  positive integer $m$.

Let $X$ be a Halphen surface of index $m$. Then $K_X^2 = 0$ and, by the genus formula, 
the linear system $| - m K_X|$ defines a genus $1$ fibration $f:X\to \bbP^1$, 
which is elliptic or quasi-elliptic if $\cha(\bbK) = 2$ or $3$.  
This fibration is {\bf{relatively minimal}} in the sense that there is  no $(-1)$-curve contained in a fiber.

\begin{proposition}\label{pro:halphen} Let $X$ be a smooth projective rational surface. Let $m$ be a positive integer. The following four properties are equivalent:
\begin{itemize}
\item[(i)] $X$ is a Halphen surface of index $m$;
\item [(ii)] $|-K_X|$ is nef and contains a  curve $F_0$ such that $\calO_{F_0}(F_0)$ is of order $m$ in $\Pic(F_0)$;
\item[(iii)] there exists a relatively minimal elliptic or  quasi-elliptic fibration $f:X\to \bbP^1$; it has  no multiple fibers when $m = 1$ and a unique  multiple fiber, of multiplicity $m$, when $m > 1$;
\item[(iv)] there exists an irreducible  pencil of curves of degree $3m$ with $9$ base points of multiplicity $m$ in $\bbP^2$, such that $X$ is the blow-up of the $9$ base points and $|-mK_X|$ is the proper transform of this pencil (the base point set may contain infinitely near points).
\end{itemize}
\end{proposition}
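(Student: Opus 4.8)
The plan is to treat (ii)$\Leftrightarrow$(i) as the analytic core, proved by a single cohomological computation, and to read off (iii) and (iv) from the resulting geometric picture. I would first record the features shared by (i) and (iii): if $|-mK_X|$ is a base-point-free pencil then $(-mK_X)^2=0$ forces $K_X^2=0$, the genus formula gives arithmetic genus $1$ for every member, so the induced morphism $f\colon X\to\bbP^1$ is a genus $1$ (elliptic or quasi-elliptic) fibration. It is relatively minimal, since a $(-1)$-curve $E$ contained in a fibre would satisfy $E\cdot K_X=0$, contradicting $E\cdot K_X=-1$. Moreover $-mK_X$ nef gives $-K_X$ nef, and Riemann--Roch with $K_X^2=0$ gives $\chi(\calO_X(-K_X))=1$ and $h^2=h^0(2K_X)=0$, hence $|-K_X|\neq\emptyset$; I pick $F_0\in|-K_X|$ and note $\omega_{F_0}=\calO_{F_0}$ by adjunction, so $F_0$ has arithmetic genus $1$ and a degree-$0$ line bundle on $F_0$ has a nonzero section only if it is trivial.

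The core is the chain of restriction sequences
$$0\to \calO_X(-(j-1)K_X)\to \calO_X(-jK_X)\to \calO_{F_0}(jF_0)\to 0,$$
where $-K_X\sim F_0$ identifies the twist on $F_0$ with $\calO_{F_0}(jF_0)\in\Pic^0(F_0)$ (of degree $F_0^2=K_X^2=0$). Riemann--Roch gives $\chi(\calO_X(-jK_X))=1$ for all $j$, while $h^0(\calO_{F_0}(jF_0))$ is $0$ when $\calO_{F_0}(jF_0)$ is nontrivial and $1$ when it is trivial. If $\calO_{F_0}(F_0)$ has order exactly $m$, induction on $j$ yields $h^0(-jK_X)=1$ and $h^1(-jK_X)=0$ for $0\le j\le m-1$; feeding $h^1(-(m-1)K_X)=0$ into the sequence for $j=m$ makes the restriction surjective, so $h^0(-mK_X)=2$. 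The two generators are a section with divisor $mF_0$ and one nonvanishing along $F_0$; their zero loci are disjoint, so $|-mK_X|$ is base-point-free, and $h^0(-jK_X)=1$ for $j<m$ shows the index is exactly $m$. This is (ii)$\Rightarrow$(i). Conversely, under (i), $-K_X$ is nef and $mF_0$ is a fibre, so $\calO_{F_0}(mF_0)\cong\calO_X(F')|_{F_0}$ for a disjoint fibre $F'$, hence trivial; were the order some $d<m$, the same computation would exhibit $|-dK_X|$ as a base-point-free pencil, contradicting minimality of the index. Thus the order is exactly $m$, giving (i)$\Rightarrow$(ii).

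For (iii) it remains to count multiple fibres. The canonical bundle formula for a relatively minimal genus $1$ fibration on a rational surface (where $\chi(\calO_X)=1$; see \cite{Dolgachev-Cossec:Book}) reads $-K_X\sim F-\sum_i(m_i-1)F_i$ with $m_iF_i$ the multiple fibres and $m_iF_i\sim F$. Comparing with $-K_X\sim F_0$ (equivalently $mF_0\sim F$) forces $F_0$ to be the unique multiple fibre, of multiplicity $m$, yielding (i)/(ii)$\Leftrightarrow$(iii). For (iv) I would blow down $(-1)$-curves to a minimal model; since $X$ is rational with $K_X^2=0$ this presents $X$ as $\bbP^2$ blown up at $9$ points, whence $-mK_X=\pi^*\calO_{\bbP^2}(3m)-m\sum_iE_i$, so the members of $|-mK_X|$ are proper transforms of a pencil of degree-$3m$ plane curves with the $9$ points as base points of multiplicity $m$; irreducibility of the general fibre gives irreducibility of the pencil. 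The ascent (iv)$\Rightarrow$(i) is the reverse: blowing up the $9$ base points resolves the base locus and realises the proper transform as the base-point-free pencil $|-mK_X|$.

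The main obstacle is the positive-characteristic analysis. The clean identification of the order of $\calO_{F_0}(F_0)$ with both the index and the fibre multiplicity relies on the theory of multiple fibres, and in characteristic $2$ or $3$ one must control quasi-elliptic fibrations and possible wild fibres, where multiplicity and the order of the normal bundle may diverge; this is precisely where Lemma~\ref{-ncurves} and the nef-ness of $-K_X$ have to be invoked carefully. The second delicate point is the descent in (iv): checking that the minimal model can be taken to be $\bbP^2$ rather than a Hirzebruch surface, and that the $9$ base points—including infinitely near ones—are exactly resolved by the blow-ups producing $X$.
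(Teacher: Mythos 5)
Your treatment of (i)$\Leftrightarrow$(ii) is essentially the paper's argument: the same restriction sequences, the same use of Riemann--Roch on the arithmetic-genus-one curve $F_0$ (where one needs, as the paper notes, that $\calO_{F_0}(nF_0)$ has degree zero on every component because $F_0$ is nef with $F_0^2=0$) to identify the index with the order of $\calO_{F_0}(F_0)$, and the same disjointness argument for base-point-freeness. The overall architecture for (iii) and (iv) also matches. However, the two steps you yourself flag as ``delicate'' are genuinely left open, and they are precisely where the paper has to work.

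First, the count of multiple fibres. You derive it from the canonical bundle formula $-K_X\sim F-\sum_i(m_i-1)F_i$, which in characteristic $p>0$ is valid only in the absence of wild fibres; you name this obstacle without closing it. The paper avoids the formula altogether: since $X$ is rational and non-minimal, there is a $(-1)$-curve $E$ with $E\cdot K_X=-1$, so $K_X$ is a primitive class; any multiple fibre $nD$ is linearly equivalent to the fibre class $-mK_X$, hence $n$ divides $m$ and $\tfrac{m}{n}F_0-D$ is torsion, hence zero because $\Pic(X)$ is torsion-free, which forces $mF_0$ to be the only multiple fibre. (If you insist on the canonical bundle formula, you must at least record that $h^1(X,\calO_X)=0$ excludes wild fibres --- this is the content of the remark following the proposition --- but the primitivity argument is shorter and characteristic-free.) Second, in (iii)$\Rightarrow$(iv), ``blow down to a minimal model, which is $\bbP^2$'' is not automatic: the minimal model could a priori be a Segre--Hirzebruch surface $\bfF_n$. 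The paper rules out $n\ge 3$ via Lemma \ref{-ncurves} (nefness of $-K_X$ forbids $(-n)$-curves with $n\geq 3$), and for $n=2$ and $n=0$ modifies the morphism by an elementary transformation, respectively a stereographic projection, to land on $\bbP^2$; it then uses nefness of $-K_X$ again to see that each exceptional configuration is a chain, which is what justifies ``$9$ base points of multiplicity $m$, possibly infinitely near.'' Neither step is hard, but as written your proof does not contain them.
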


In the proof of (iii)$\Rightarrow $(iv) below,  the classification of minimal rational surfaces is used. Recall that a minimal rational surface
is isomorphic to $\bbP^2$ or to one of the Segre-Hirzebruch surfaces $\bfF_n = \bbP(\calO\oplus \calO(-n))$,\footnote{Here we adopt Grothendieck's definition of the projective bundle associated to a locally free sheaf.}  with $n\geq 0$ and $n\neq 1$. If $n=0$,  the
surface is isomorphic to $\bbP^1\times \bbP^1$ and, if $n=1$,  the surface is not minimal since it is isomorphic to the blow-up of $\bbP^2$ at one
point.  For all $n\geq 1$ there is a unique irreducible curve on $\bfF_n$ with negative self-intersection (equal to $-n$). It is defined by a section of the  $\bbP^1$-bundle $\bfF_n\to \bbP^1$ corresponding to  the surjection $\calO\oplus \calO(-n)\to \calO(-n)$ (\cite{Hartshorne:book}, \S V.2). 

\begin{proof}[Proof of Proposition \ref{pro:halphen}] Under assumption (i),  Riemann-Roch formula  on a rational surface and Serre's Duality
$$h^0(D)+h^0(K_X-D) = h^1(D)+\half D\cdot (D-K_X)+1$$
and $K_X^2=0$ imply that $h^0(-K_X) > 0$. Let $F_0$ be an element of the linear system $|-K_X|$. 

We now prove (i)$ \Leftrightarrow$(ii). The exact sequence
\beq\label{ex1}
0\to \calO_X\to \calO_X(nF_0)\to \calO_{F_0}(nF_0)\to 0
\eeq
together with  $h^1(X, \calO_X) = 0$, because $X$ is rational, show that 
\[
h^0(\calO_X(nF_0)) = 1+ h^0(\calO_{F_0}(nF_0)).
\]
Since $F_0$ is a nef divisor and $F_0^2=0$, the restriction of $\calO_X(nF_0)$ to each irreducible component of 
$F_0$ is an invertible sheaf of degree zero. The curve $F_0$ is of arithmetic genus 1, so we can apply 
the Riemann-Roch Theorem on $F_0$ (see \cite{Mumford}, Lecture 11)  to conclude that $h^0(\calO_{F_0}(nF_0)) > 0$ 
if and only if $\calO_{F_0}(nF_0) \cong \calO_{F_0}$, if and only if  $h^0(\calO_{F_0}(nF_0))= 1$ . 

This shows that the index $m$ can be characterized by the property
\[
m = \min\{n:h^0(\calO_{X}(-nK_X)) = 2\} = \min\{n:h^0(\calO_{F_0}(nF_0)) > 0\},
\]
and the equivalence (i)$ \Leftrightarrow$(ii) follows from this characterization. 
 
(i)$ \Rightarrow$ (iii) The pencil $|-mK_X|$ defines a morphism $f:X\to \bbP^1$ with general fiber of arithmetic genus $1$. The generic fiber $X_\eta$ is an irreducible curve of arithmetic genus $1$ over the field $\bbK(\eta)$ of rational functions on the curve $\bbP^1$. Since $X$ is smooth, $X_\eta$ is a regular curve over $\bbK(\eta)$. It is known that it is smooth if $\cha(\bbK) \ne 2,3$, so that in this case $f$ is an elliptic fibration (see \cite{Dolgachev-Cossec:Book}, Proposition 5.5.1). 
If it is not smooth, then a general fiber of $f$ is an irreducible cuspidal curve, so that $f$ is a quasi-elliptic fibration. As explained above (just before Proposition \ref{pro:halphen})  the fibration $f$ is relatively minimal. 

Since $X$ is a rational non-minimal surface, there exists a $(-1)$-curve $E$ on $X$ satisfying $E\cdot K_X = -1$. This shows that $K_X$ is a primitive divisor class, i.e. $K_X$ is not a multiple of any other divisor class. If $m = 1$, this implies that there are no multiple fibers. If $m > 1$, this implies that the multiplicity $n$ of any multiple fiber $nD$ divides $m$. Since $|mF_0|$ is a multiple fiber of multiplicity $m$,
the class of the divisor $\frac{m}{n}F_0-D$ is a torsion element in the Picard group  of $X$. Since $X$ is a rational surface, this class must be trivial, and we conclude that $f$ has a unique multiple fiber, namely $mF_0$.

(iii)$\Rightarrow $(iv) 
Since the fibration $f$ is relatively minimal, the canonical class is proportional to 
the class of the fibers of $f$ (see \cite{BPVDVH}, \S V.12); in particular, 
$K_X^2=0$ and $-K_X$ is nef.

Let $\pi:X\to Y$ be a birational morphism to a minimal ruled surface. Suppose  that $Y$ is not isomorphic to $\bbP^2$;  then $Y$ is isomorphic to  a  surface $\bfF_n$, $n \ne 1$.  
Let $E_0$ be the section of $Y$ with $E_0^2 = -n$ and let $E$ be its proper transform on $X$. We have  $E^2 \le -n$ and $E^2 = -n$ if and only if $\pi$ is an isomorphism in an open neighborhood of $E_0$. Since $-K_X$ is nef, Lemma \ref{-ncurves} shows that $n=0$ or $n=2$. 
Assume $n = 2$. 
Then $\pi$ is an isomorphism over $E_0$, hence it factors  through a birational  morphism $\pi:X\to X_1$, where $X_1$ is the blow-up of $\bfF_2$ at  a point $x\not\in E_0$. Let $X_1\to \bfF_{1}$ be the blow-down of the fiber of the ruling $\bfF_2\to \bbP^1$ passing through $x$. Then we obtain a birational morphism $X\to \bfF_1\to \bbP^2$. Assume now that $n = 0$. The morphism factors through $X\to X_2$, where $X_2$ is the blow-up of a point $y$ on $\bfF_0$. Then we compose $X\to X_2$ with the birational morphism $X_2\to \bbP^2$ which is given by the stereographic projection of $\bfF_0$ onto $\bbP^2$ from the point $y$. 
As a consequence, changing $Y$ and $\pi$, we can always assume that $Y=\bbP^2$. 

Since $K_X^2 = 0$, the morphism $\pi:X\to \bbP^2$ is the blow-up of $9$ points $p_1,\ldots,p_9$, where some of them may be infinitely near. Since $-K_X$ is nef, any smooth rational curve has   self-intersection $\ge -2$. This implies that the set of points $\{p_1,\ldots,p_9\}$ can be written in the form
\beq\label{set}
\{ p_{1}^{(1)},p_{1}^{(2)},\ldots,p_{1}^{(a_1)}; \ldots  ; p_k^{(1)},p_k^{(2)},\ldots, p_k^{(a_k)}\},
\eeq
where the $p_{j}^{(1)}$ are points in $\bbP^2$, and $p_j^{(b+1)}$ is infinitely near, of the first order, to the previous point $p_j^{(b)}$ for $j = 1,\ldots, k$ and $b = 1,\ldots,a_j-1$. Equivalently, the exceptional curve 
\[
E_j = \pi^{-1}(p_j^{(1)})
\] 
is a chain of $(-2)$-curves of length $(a_j-1)$ with one more $(-1)$-curve at the end of the chain.

The formula for the canonical class of the blow-up of a nonsingular surface at a closed point shows that 
\beq
K_X = -3e_0+e_1+\cdots +e_9,
\eeq
where $e_0 = c_1(\pi^*(\calO_{\bbP^2}(1))$ and $e_j$ is the divisor class of  $E_j,\  j = 1,\ldots,9.$ This implies that 
$$|-mK_X| = |3me_0-m(e_1+\cdots+e_9)|,$$
hence the image of the pencil $|-mK_X|$ in the plane is the linear system of curves of degree $3m$ with singular points of multiplicity $m$ at $p_i^{(1)}$, $1\leq j \leq k$. 

(iv)$\Rightarrow$ (i) Let $X$ be the blow-up of the base points of the pencil. The proper transform  of the pencil on $X$ is the linear system  $|3me_0-m(e_1+\cdots+e_9)|$. The formula for the canonical class on $X$ shows that this system is equal to $|-mK_X|$.  Since the pencil is irreducible, $|-mK_X|$ is a pencil with no fixed component and no base point, so $X$ is a Halphen surface.
\end{proof}

\begin{remark} The proof of the proposition shows that the multiplicity $m$ of the multiple fiber $mF_0$ of the genus one fibration is equal to the order of $\calO_{F_0}(F_0)$ in $\Pic(F_0)$. This property characterizes non-wild fibers of elliptic fibrations (see \cite{Dolgachev-Cossec:Book}, Proposition 5.1.5). It is a consequence of the vanishing of $H^1(X,\calO_X)$.  It always holds if the multiplicity is prime to the characteristic.
\end{remark}

\subsection{Halphen pencils of index m}\label{par:Halphen pencil} 
The following lemma is well-known and its proof is left to the reader.

\begin{lemma}\label{wk} Let $\phi:S'\to S$ be the blow-up of a point $x$ on a smooth projective surface $S$ and let $C'$ be the proper transform of a curve passing through $x$ with multiplicity $1$. Then 
$\calO_{C'}(C'+E) \cong (\phi _{|{C'}})^*\calO_C(C)$, where $E = \phi^{-1}(x)$
is the exceptional divisor.
\end{lemma}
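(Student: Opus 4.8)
The plan is to reduce the statement to the standard blow-up formula for the total transform of $C$, followed by a purely formal application of the functoriality of pullback of line bundles; no deep input is needed.

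First I would compute the total transform of $C$. Working in a local analytic (or \'etale) neighbourhood of $x$, choose a local equation $f$ for $C$; the hypothesis that $C$ passes through $x$ with multiplicity $1$ means $\ord_x(f)=1$, i.e. $x$ is a smooth point of $C$. The pullback $\phi^*f$ then factors as the product of a local equation for $C'$ with the $(\mult_x C)$-th power of a local equation for $E$, so that the general formula $\phi^*C = C' + (\mult_x C)\,E$ specializes here to the equality of \emph{effective divisors}
\[
\phi^*C = C' + E .
\]
It is important that this is an equality on the nose and not merely up to linear equivalence. Passing to the associated invertible sheaves gives $\calO_{S'}(C'+E)\cong \calO_{S'}(\phi^*C)\cong \phi^*\calO_S(C)$.

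Next I would restrict to $C'$. Let $\iota_{C'}\colon C'\hookrightarrow S'$ and $\iota_C\colon C\hookrightarrow S$ denote the inclusions; by construction of the proper transform one has the commutative square $\phi\circ\iota_{C'} = \iota_C\circ(\phi_{|C'})$. Applying functoriality of pullback to $\calO_S(C)$ along this square yields
\[
\bigl(\phi^*\calO_S(C)\bigr)\big|_{C'}
= \iota_{C'}^*\phi^*\calO_S(C)
= (\phi_{|C'})^*\iota_C^*\calO_S(C)
= (\phi_{|C'})^*\calO_C(C).
\]
Combining this with the isomorphism of the previous step gives $\calO_{C'}(C'+E)\cong(\phi_{|C'})^*\calO_C(C)$, which is exactly the claim. (One may also note that, the multiplicity being $1$, the induced map $\phi_{|C'}\colon C'\to C$ is the blow-up of the smooth point $x$ on the curve $C$, hence an isomorphism; but this is not needed for the argument.)

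The only step requiring genuine care is the first one, namely establishing $\phi^*C = C'+E$ as effective divisors rather than merely up to rational equivalence, since it is this equality that lets me identify $\calO_{S'}(C'+E)$ with $\phi^*\calO_S(C)$ directly; here the multiplicity-one hypothesis is exactly what is used. Everything after that is formal and should present no obstacle.
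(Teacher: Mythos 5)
Your proof is correct, and since the paper explicitly leaves this lemma's proof to the reader, your argument is precisely the standard one the authors have in mind: the identity $\phi^*C=C'+(\mult_x C)E=C'+E$ of divisors, followed by functoriality of pullback of invertible sheaves along the square $\phi\circ\iota_{C'}=\iota_C\circ\phi_{|C'}$. One small remark: the insistence that $\phi^*C=C'+E$ hold as an equality of effective divisors rather than merely up to linear equivalence is unnecessary for the conclusion, since linear equivalence already yields the isomorphism $\calO_{S'}(C'+E)\cong\calO_{S'}(\phi^*C)$; but the equality does hold and the extra care is harmless.
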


In the plane $\bbP^2$, an irreducible pencil of elliptic curves of degree $3m$ with $9$ base points
of multiplicity $m$ is called a {\bf{Halphen pencil of index $\mathbf{m}$}}. If $C_0$ is a cubic curve through the base points, then $C_0$ is the image of a curve 
$F_0\in |-K_X|$; such a curve is unique if $m > 1$ and moves in the pencil if $m=1$. 

The classification of fibers of  genus $1$ fibrations shows that $\calO_{F_0}(F_0)\not\cong \calO_{F_0}$ implies that $F_0$ is a reduced divisor of type $I_m$ in Kodaira's notation, unless $\cha(\bbK)$ divides $m$ (see \cite{Dolgachev-Cossec:Book}, Proposition 5.1.8). We further assume that $F_0$ is irreducible if $m > 1$; this will be enough for our applications. Thus $F_0$ is a smooth or nodal curve, unless the characteristic of $\bbK$ divided $m$ in which case it could be a cuspidal curve.  Under this assumption, the restriction of $\pi$ to $F_0$ is an isomorphism $F_0\cong C_0$; in particular, no base point $p_i^{(j)}$ is a singular point of $C_0$. 

In the notation of Equation \eqref{set}, consider the divisor class in $\Pic(C_0)$ given by
\[
\frakd = 3\frakh-a_1 p_1^{(1)}-\cdots-a_k p_k^{(1)},
\]
where $\frakh$ is the intersection of $C_0$ with a line in the plane. 
Since $\calO_{C_0}(C_0) \cong \calO_{C_0}(3\frakh)$, Lemma \ref{wk} gives  
\[
\calO_{F_0}(F_0) \cong \calO_{F_0}(3e_0-e_1-\cdots-e_9) \cong (\pi|_{F_0})^*(\calO_{C_0}(\frakd)).
\]
This implies that $\calO_{F_0}(F_0)$ is of order $m$ in $\Pic(F_0)$ if and only if $\frakd$ is of order $m$ in $\Pic(C_0)$. If we choose the group law $\oplus$ on the set $C_0^\#$ of regular points of $C_0$ with a nonsingular inflection point $o$ as the zero point, then the latter condition is equivalent to 
\beq\label{sum}
a_1p_1^{(1)}\oplus \cdots \oplus a_k p_k^{(1)} = \epsilon_m
\eeq
where $\epsilon_m$ is a point of order $m$ in the  group $(C_0^\#, \oplus)$. 

This provides a way to construct Halphen pencils and the corresponding Halphen surfaces (under our assumptions that $F_0$ is irreducible). Start with an irreducible plane cubic $C_0$, and choose $k$ points $p_1^{(1)},\ldots,p_k^{(1)}$ in $C_0^\#$ satisfying  Equation \eqref{sum} with $a_1+\ldots+a_k = 9$. Then blow up the points $p_1^{(1)},\ldots,p_k^{(1)}$ together with infinitely near points $p_{j}^{(i)}, i = 2,\ldots,a_j,$ to arrive at a rational surface $\pi:X\to \bbP^2$. Then $|-K_X| = |F_0|$, where $F_0$ is the proper transform of $C_0$. Since the $p_i^{(1)}$ satisfy Equation \eqref{sum},  $\calO_{F_0}(F_0) \cong (\pi|_{F_0})^*\calO_{C_0}(\epsilon_m)$ is of order $m$ in $\Pic (F_0)$. Since $F_0$ is irreducible, $|-mK_X| = |mF_0|$ is nef. Consequently, Proposition   \ref{pro:halphen} shows that $X$ is a Halphen surface.

\subsection{Unnodal Halphen surfaces}\label{general}

By Lemma \ref{-ncurves}, Halphen surfaces  contain no $(-n)$-curves
with $n\geq 3$. Recall that a Halphen surface is {\bf{unnodal}} if it has no $(-2)$-curves. Since a $(-2)$-curve $R$ satisfies $R\cdot K_X = 0$, it  must be an irreducible component of a fiber of the genus $1$ fibration $f:X\to \bbP^1$. 
Conversely, all reducible fibers of $f$ contain $(-2)$-curves, because
$f$ is a relatively minimal elliptic fibration.
Thus {\sl{$X$ is unnodal if and only if all members of the pencil $|-mK_X|$ are irreducible}}.

In this case all the curves $E_i$ are $(-1)$-curves; in particular, there are no infinitely near points in the Halphen set.  Also, in this case, the morphism $f:X\to \bbP^1$ is an elliptic fibration because any quasi-elliptic fibration on a rational surface has a reducible fiber whose irreducible components are $(-2)$-curves (this follows easily from \cite{Dolgachev-Cossec:Book}, Proposition 5.1.6, see the proof of Theorem  5.6.3). The fibers of $f$ are irreducible curves of arithmetic genus $1$.  This shows that {\sl{a Halphen surface is unnodal if and only if it arises from a Halphen pencil with irreducible members}}.  

\begin{proposition}\label{gen}Let $X$ be a Halphen surface of index $m$. Then $X$ is unnodal if and only if the following conditions are satisfied.
\begin{itemize}
\item[(i)] There is no infinitely near point in the 
Halphen set  $\calP = \{p_1,\ldots,p_9\}$;
\item[(ii)] the divisor classes 
\begin{eqnarray*}
&{}&-dK_X+e_i-e_j, \ i \ne j,\quad 0\le 2d \le m,\\
&{}&-dK_X\pm(e_0-e_i-e_j-e_k), \ i < j < k, \  0\le 2(3d\pm 1) \le 3m,
\end{eqnarray*}
are not effective.
\end{itemize}
\end{proposition}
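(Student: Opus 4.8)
The plan is to characterize unnodality through the irreducibility of the fibers of the genus $1$ fibration, using the discussion immediately preceding the proposition. Recall we have established that $X$ is unnodal if and only if every member of $|-mK_X|$ is irreducible, equivalently if and only if $X$ contains no $(-2)$-curve. Since a $(-2)$-curve $R$ satisfies $R\cdot K_X = 0$ and lies in a fiber, the strategy is to translate ``no $(-2)$-curve'' into the non-effectivity conditions (i) and (ii) by analyzing which classes in $\Pic(X)$ can be represented by $(-2)$-curves or, more precisely, by the effective divisors whose presence forces a reducible fiber.

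First I would dispose of condition (i). If there is an infinitely near point in $\calP$, then by the analysis in the proof of Proposition \ref{pro:halphen} (see Equation \eqref{set}) the exceptional preimage contains a chain of $(-2)$-curves, so $X$ is nodal; conversely, in the unnodal case the discussion just before the proposition shows every $E_i$ is a $(-1)$-curve and there are no infinitely near points. Thus (i) is equivalent to the absence of $(-2)$-curves coming from the blow-up structure itself. Next I would turn to condition (ii), which is the substantive part. The idea is that a $(-2)$-curve $R$ is an irreducible component of some fiber in $|-mK_X|$, hence $0 = R\cdot K_X$ and $R^2 = -2$; writing $R = a e_0 - \sum b_i e_i$ in the geometric basis, these two numerical constraints plus the Hodge-index / Riemann-Roch positivity should pin down $R$, up to adding multiples of $-K_X$ (which has square zero and is nef), to one of the two families of classes listed. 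Concretely, I would argue that modulo the nef class $-K_X$ every potential $(-2)$-class reduces to a class of the form $e_i - e_j$ or $\pm(e_0 - e_i - e_j - e_k)$; reintroducing the $-dK_X$ shifts and the self-intersection bookkeeping produces exactly the displayed ranges $0 \le 2d \le m$ and $0 \le 2(3d \pm 1) \le 3m$.

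The key computation is the intersection-form bookkeeping: for a class $D = -dK_X + v$ with $v \in \{e_i-e_j,\ \pm(e_0-e_i-e_j-e_k)\}$, one checks $D\cdot K_X = 0$ automatically (since $K_X^2 = 0$ and both $v$-types are orthogonal to $K_X$), computes $D^2$ using $v^2 = -2$ in both cases, and verifies that the inequalities on $d$ are exactly those making $D$ a candidate effective class with the right numerical profile of a sum of a $(-2)$-curve and fiber components. The effectivity of such a $D$ is then equivalent to the existence of a reducible member of $|-mK_X|$, i.e. to nodality. I would make this equivalence precise by a Riemann-Roch argument on $X$ (as used in the proof of Proposition \ref{pro:halphen}) showing $D$ effective forces a $(-2)$-curve in its support, and conversely any $(-2)$-curve, after adding the appropriate multiple $d$ of a fiber to land in an effective range, yields one of these classes.

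The main obstacle I anticipate is the precise matching of the index ranges for $d$, and in particular the asymmetric bound $0 \le 2(3d \pm 1) \le 3m$ for the second family: one must carefully track how the $\pm(e_0-e_i-e_j-e_k)$ summand interacts with the multiplicity-$m$ multiple fiber and with the requirement that $D$ be effective rather than merely numerically admissible. The subtlety is that $-K_X$ itself need not be effective beyond $F_0$, and the multiple fiber of multiplicity $m$ constrains which multiples $-dK_X$ can appear; getting the endpoints of the inequalities exactly right, rather than off by one, will require the order-$m$ torsion analysis of $\calO_{F_0}(F_0)$ from Section \ref{par:Halphen pencil} together with a clean Hodge-index argument to rule out any stray $(-2)$-class not of the listed form. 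I would handle this by reducing every $(-2)$-class modulo $-K_X$ to a finite explicit list using that $K_X^\perp / \langle K_X\rangle$ is the root lattice $\bbE_9$, whose roots are exactly represented by the two displayed families.
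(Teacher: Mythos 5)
Your skeleton is right --- reduce the class of a $(-2)$-curve modulo $K_X$ to one of the two root families $e_i-e_j$, $\pm(e_0-e_i-e_j-e_k)$ of $\bbE_9$, and observe that effectivity of one of the listed classes forces a $(-2)$-curve in its support --- but there is a genuine gap exactly where you anticipated one: you never produce the bounds $0\le 2d\le m$ and $0\le 2(3d\pm1)\le 3m$, and the mechanisms you propose for them will not work. The "self-intersection bookkeeping" is vacuous here: $D=-dK_X+v$ has $D^2=-2$ and $D\cdot K_X=0$ for \emph{every} integer $d$, since $K_X^2=0$ and $v\cdot K_X=0$, so no numerical computation on $D$ alone singles out a range of $d$. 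Likewise the order-$m$ torsion of $\calO_{F_0}(F_0)$ and the Hodge index theorem say nothing about which shift $d$ makes the class of a given $(-2)$-curve land in the displayed list. Without the bound, the direction "(i) and (ii) $\Rightarrow$ unnodal" fails: a $(-2)$-curve could a priori have class $-dK_X+v$ with $d$ outside the listed range, which condition (ii) does not exclude.

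The paper's source of the bound is elementary and purely about degrees in the plane. Once (i) and the $d=0$ cases of (ii) guarantee that $\pi:X\to\bbP^2$ contracts no component of a member of $|-mK_X|$, a $(-2)$-curve $R$ is a component of a reducible member, whose total image has degree $3m$; replacing $R$ by a complementary component if necessary (all components of a reducible fiber of a relatively minimal genus $1$ fibration are $(-2)$-curves), one may assume $2\deg\bar{R}\le 3m$. Since $\deg\bar{R}=r\cdot e_0$ equals $3d$ in the first family and $3d\pm1$ in the second, this single inequality is precisely $2d\le m$, respectively $2(3d\pm1)\le 3m$. You need this step, or an equivalent substitute, to close the argument; as written, your proof establishes only that nodality is detected by \emph{some} class of the form $-dK_X+v$ with $d$ unbounded, which is a strictly weaker (and not finitely checkable) statement than the proposition.
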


\begin{remark}
Since $K_X=-3e_0+\sum e_i$, the two types of divisor classes in condition
(ii) are equal to 
\begin{eqnarray*}
&{}&3de_0-d(e_1+\cdots+e_9)+e_i-e_j, \ i \ne j,\\
&{}&3de_0-d(e_1+\cdots+e_9)\pm(e_0-e_i-e_j-e_k), \ i < j < k.
\end{eqnarray*}
\end{remark}

\begin{example}
When $m=1$, the inequalities $2d\leq m$ and $2(3d\pm 1) \leq 3m$ lead
to $d=0$ and the conditions are respectively redundant with (i), or exclude triples of collinear
points in the set $\{ p_1, \ldots, p_9\}$.

When $m=2$, the inequality $2d\leq m$ reads $d=0$ or $1$, and we have 
to exclude a cubic through $8$ points with a double point at one of them.
The inequality $2(3d\pm 1) \leq 3m$ gives rise to curves of degree $1$ (for $d=0$)
and degree $2$ (for $d=1$): We have to exclude triples of collinear points,
and sets of six points on a conic.
\end{example}

\begin{proof}[Proof of Proposition \ref{gen}] Suppose the conditions are satisfied. Since no class $e_i-e_j$ is effective, the exceptional curves $E_i$ are $(-1)$-curves. Thus the morphism $\pi:X\to \bbP^2$ is the blow-up of $9$ points, none of which is infinitely near another. This implies that $\pi$ does not contract any component of a member of $|-mK_X|$. 
Let $R$ be a $(-2)$-curve on $X$. It must be an irreducible component of a member of $|-mK_X|$, hence $\bar{R} = \pi(R)$ is an irreducible component of a curve of degree $3m$. Taking a complementary component, we may assume that 
\beq\label{assumption}
2\deg(\bar{R}) \le 3m.
\eeq
The divisor class $r = [R]$ belongs to $K_X^\perp$ and satisfies $r^2 = -2$. Since $(r+dK_X)^2 = -2$, we can change $r$ into $r'=r+dK_X$ in such a way that $r' = d'e_0-k_1'e_1-\cdots-k_9'e_9$ with $|d'|  \le 1$. All such vectors $r'$ can be listed: 
\beq
r' = e_i-e_j,\  i\ne j, \quad {\text{ or }} \quad r'=\pm (e_0-e_i-e_j-e_k), \ i < j < k.
\eeq
Thus 
\begin{eqnarray*}
r & = & -dK_X +e_i-e_j, \quad {\text{or}} \\
 r & = &  -dK_X\pm (e_0-e_i-e_j-e_k), \ i < j < k.
 \end{eqnarray*}
Since $\deg \bar{R} = r\cdot e_0 > 0$ and $-K_X\cdot e_0=3$, the curve $\bar{R}$  is of degree $3d$ in the first case, and of degree $3d\pm 1$ in the second case. Thus,  inequality \eqref{assumption} shows that $2d\leq m$ (resp. $2(3d\pm 1)\leq 3m$). From 
(ii), we deduce that $\bar{R}$ and $R$ do not exist, and that $X$ is general.

Conversely, if one of these divisor classes is effective, then $X$ contains a $(-2)$-curve. This proves the proposition.
\end{proof}

\begin{remark} Let $f:X\to \bbP^1$ be a Halphen elliptic surface of index $m > 1$. The generic fiber $X_\eta$ is 
a genus one curve over the field $\bbK(\eta)$ which has no rational point over this field. The Jacobian variety $\Jac(X_\eta)$ of $X_\eta$ (equal to the connected component of the identity of the Picard scheme of $X_\eta$ over $\eta$) is an abelian variety of dimension 1  over $\bbK(\eta)$. Applying the theory of relative minimal models one can construct an elliptic surface $j:J\to \bbP^1$ with  generic fibers $J_\eta$ isomorphic to 
$\Jac(X_\eta)$.  It is called the Jacobian elliptic surface of $X\to \bbP^1$. In our case, this elliptic surface is a Halphen surface of index $1$. The curve $X_\eta$ is a torsor over $J_\eta$. Its class in the group of isomorphism classes of torsors over $J_\eta$ is uniquely determined by a choice of a closed point $y\in \bbP^1$ and an element $\alpha$ of order $m$ in $\Pic(J_y)$. The fiber  $X_y = mF_y$ is the unique multiple fiber of $f$, the curves $F_y$ and $J_y$ can be canonically identified and the isomorphism class of $\calO_{F_y}(F_y)$ coincides with $\alpha$. Since Halphen surfaces of index 1 are parameterized by an open subset of the Grassmannian $G(2,10)$ of pencils of plane cubic curves, their moduli space is an irreducible variety of dimension 8. The construction of the Jacobian surface shows that the moduli space of Halphen surfaces of index $m > 1$ is a fibration over the moduli space of Halphen surfaces of index 1 with one-dimensional fibers. It is expected 
 to be an irreducible variety of dimension 9.

\end{remark}

\begin{remark} It follows from Proposition \ref{gen} that unnodal Halphen sets of given index form a proper Zariski open subset in the set of all Halphen sets of this index. So, one can say that unnodal Halphen sets or the corresponding Halphen surfaces are {\bf general}  in the sense of moduli.
\end{remark}

\subsection{Automorphisms of a Halphen surface}\label{autoha}
We now discuss a result of Coble (see a modern proof in  \cite{Gizatullin:1980}) which describes the automorphism group of an unnodal Halphen surface $X$ and its image in the group $W_X$. 

Let $X$ be a Halphen surface of index $m$. Since the group $\Aut(X)$ preserves the canonical class $K_X$, it preserves
the linear system $\vert -mK_X\vert$ and permutes the fibers of the Halphen fibration $f:X\to \bbP^1$.
As explained in the Introduction, see Section \ref{par:WXWn}, we identify  $\Pic(X)$ with $\bbZ^{1,9}$ and $W_X$ with the Coxeter group $W_9$. 
The lattice $\bbE_9\cong K_X^\perp $ is isomorphic to the root lattice  of affine type $E_8$; the radical of  $\bbE_9$ is generated by the vector $k_9$ and  the lattice $\bbE_8\cong \bbE_9/\bbZ k_9$ is isomorphic to the root lattice of finite type $E_8$.
Consequently, the Weyl group $W_9$ is isomorphic to the affine Weyl group of type $E_8$, and fits in the extension
\beq\label{iota2}
0\to \bbE_8 \overset{\iota}{\longrightarrow} W_9 \to W_8 \to 1,
\eeq
where $\iota: \bbE_8 \to W_9$ is defined 
by the formula
\beq\label{iota}
\iota(w)(v) = v+(v,k_9)w-\bigl((w,v)+\half(v,k_9)(w,w)\bigr)k_9,
\eeq
and $W_8$ is a finite group of order  $2^7\cdot 3^3\cdot 5\cdot 8!$. \footnote{It is an exercise to check that  $\iota(w+w') = \iota(w)\circ \iota(w')$.}

The following theorem shows that the size of $\Aut(X)$ depends on the
existence of reducible fibers for $f$ (see \cite{Gizatullin:1980} for a more precise statement).  We identify the lattice $\bbE_8$  with $K_X^\perp/\bbZ K_X$ and the map $\iota$ with the homomorphism $K_X^\perp/\bbZ K_X \to W_X$ defined by $\iota(D)(A) = A- (A\cdot D)K_X$. 

\begin{theorem}\label{Halphen-CS}  Let $X$ be a Halphen surface of index $m$.  
If $X$ is unnodal, then $\Aut(X)^*$ contains a subgroup $G$  whose image in $W_9$ is equal to $\iota(m\bbE_8) \subset \iota(\bbE_8)$;  in particular, an unnodal Halphen set is Cremona special.
If $X$ is not unnodal then the index of $\Aut(X)^*$ in $W_X$ is infinite, and thus
$X$ is not Cremona special.
\end{theorem}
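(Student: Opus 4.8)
The statement has two halves, and I would prove them by opposite strategies. For unnodal $X$ the goal is to \emph{produce} automorphisms, and I would obtain them as translations coming from the Mordell--Weil group of the associated Jacobian fibration, then compute their image in $W_9$. For non-unnodal $X$ the goal is the reverse: to show $\Aut(X)^*$ is \emph{confined} to a subgroup of $W_X$ of infinite index, namely the stabiliser of the finite set of classes of $(-2)$-curves.

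For the unnodal case, recall from Section \ref{general} that $f\colon X\to\bbP^1$ is a relatively minimal elliptic fibration with all fibres irreducible, and that by Proposition \ref{pro:halphen}(ii) the reduced anticanonical curve $F_0\in|-K_X|$ satisfies that $\calO_{F_0}(F_0)$ has order exactly $m$. I would first pass to the Jacobian fibration $j\colon J\to\bbP^1$, a Halphen surface of index $1$ with a section. Since $X$ has no reducible fibres, neither does $J$, so by the Shioda--Tate formula (no reducible-fibre contributions, and $\bbE_8$ unimodular) the Mordell--Weil group is the full lattice $MW(J)=\bbE_8$; this is the one place where unnodality is essential. Each $P\in MW(J)$ acts by translation on the generic fibre $X_\eta$, a torsor under $J_\eta$, and since $f$ is relatively minimal this extends to $\tau_P\in\Aut(X)$; let $G$ be the resulting subgroup. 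To compute the image in $W_9$, note that $\tau_P$ preserves every fibre, so $\tau_P^*$ fixes $K_X$ and lies in the translation subgroup $\iota(\bbE_8)$; writing $\tau_P^*=\iota(w_P)$ one has $\tau_P^*(D)=D-(w_P\cdot D)K_X$ for $D\in K_X^\perp$. Restricting to $F_0$ and using that a translation acts trivially on $\Pic^0(F_0)$ gives $(w_P\cdot D)\,(K_X|_{F_0})=0$ in $\Pic^0(F_0)$; as $K_X|_{F_0}$ has order $m$, this forces $w_P\cdot D\equiv 0\pmod m$ for all $D$, hence $w_P\in m\bbE_8$ by unimodularity. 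The \textbf{main obstacle} is the quantitative converse, that $w_P=m\bar P$ exactly, so that $P\mapsto\tau_P^*$ identifies $MW(J)=\bbE_8$ with $\iota(m\bbE_8)$; I would derive this from the classical formula for the action of a section on the N\'eron--Severi group of $J$, transported to $X$ via the torsor structure, the factor $m$ reflecting that the classes $e_i$ are $m$-sections of $f$. Granting this, $G\subset\Aut(X)^*$ has image $\iota(m\bbE_8)$; since $\iota(\bbE_8)$ has index $|W_8|$ in $W_9$ and $m\bbE_8$ has index $m^8$ in $\bbE_8$, this image is of finite index, so $X$ is Cremona special.

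For the non-unnodal case I would argue directly. By hypothesis $X$ carries a $(-2)$-curve; by Lemma \ref{-ncurves} every $(-2)$-curve $R$ satisfies $R\cdot K_X=0$, hence lies in a fibre of $f$, so the set $N$ of classes of irreducible $(-2)$-curves is finite and non-empty. Since $\Aut(X)$ preserves $K_X$, the intersection form, and irreducibility, $\Aut(X)^*$ permutes $N$, whence $\Aut(X)^*\subset\St_{W_9}(N)$, the stabiliser of $N$. It then suffices to show $\St_{W_9}(N)$ has infinite index. Fix $r\in N$; because $r\in K_X^\perp$ but $r\notin\bbZ K_X$ (as $r^2=-2$), the linear form $w\mapsto w\cdot r$ on $\bbE_8$ is non-zero and so takes infinitely many values. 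For $w\in\bbE_8$ the formula \eqref{iota} gives $\iota(w)(r)=r-(w\cdot r)K_X$, and for $n\ne 0$ the class $r-nK_X$ is not in $N$: if $C$ were an irreducible $(-2)$-curve with $[C]=r-nK_X$ then $[C]\cdot r=r^2-n(K_X\cdot r)=-2<0$, which forces $C=R$ and hence $n=0$. Choosing $w_n\in\bbE_8$ with $w_n\cdot r=n$ for infinitely many distinct $n$, the elements $\iota(w_n)$ lie in pairwise distinct cosets of $\St_{W_9}(N)$, since $\iota(w_n-w_{n'})$ moves $r$ out of $N$ whenever $n\ne n'$. Thus $[W_X:\St_{W_9}(N)]=\infty$, a fortiori $[W_X:\Aut(X)^*]=\infty$, and $X$ is not Cremona special.
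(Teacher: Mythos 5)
Your overall strategy coincides with the paper's: translations coming from the Mordell--Weil/Jacobian structure in the unnodal case, and confinement to the stabiliser of the finite set of $(-2)$-classes in the nodal case. The second half of your argument is complete and correct (indeed slightly more explicit than the paper's, which merely observes that the fixator of a nonzero class of $K_X^\perp\setminus\bbZ K_X$ meets $\iota(\bbE_8)$ in a subgroup of rank $\le 7$). Your containment argument in the first half --- restricting $\tau_P^*(D)=D-(w_P\cdot D)K_X$ to the half-fibre $F_0$, using that a translation acts trivially on $\Pic^0(F_0)$ and that $K_X|_{F_0}$ has order exactly $m$, then invoking unimodularity of $\bbE_8$ --- is a clean way to get $w_P\in m\bbE_8$ that the paper does not use in this form.

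The genuine gap is exactly the step you flag as the ``main obstacle'': proving that the image is \emph{all} of $\iota(m\bbE_8)$, i.e.\ that $P\mapsto w_P$ is surjective onto $m\bbE_8$ (even the weaker statement that it has finite cokernel, which is what ``Cremona special'' requires, is not established by anything you actually prove --- a priori the image could be $\{0\}$). Appealing to ``the classical formula for the action of a section on $\mathrm{NS}(J)$, transported via the torsor structure'' is not a proof: the transport is precisely where the work lies, since $X_\eta$ has no section and every multisection has relative degree divisible by $m$. The paper closes this gap by working on $X$ directly: it constructs the restriction homomorphism $\tr\colon K_X^\perp\to J_\eta(\eta)$, shows it is surjective with kernel $\Pic_{\fib}(X)=\bbZ K_X$ when $X$ is unnodal (this is where unnodality enters, replacing your Shioda--Tate step), and then computes, for $A\in K_X^\perp$ with $\tr(A)=\fraka$ and any separable effective $D$, that $t_\fraka(D)\sim D-m(D\cdot K_X)A+\bigl[m(D\cdot A)-\tfrac{m^2}{2}(D\cdot K_X)A^2\bigr]K_X$, whence $t_\fraka(D)\sim D+m(D\cdot A)K_X$ on $K_X^\perp$. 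Surjectivity of $\tr$ then gives that the image is exactly $\iota(mK_X^\perp/\bbZ K_X)=\iota(m\bbE_8)$. You would need to carry out this computation (or an honest version of the height/section formula transported through the degree-$m$ torsor) to complete your proof; also note that your assertion that $\tau_P^*$ lies in the translation subgroup $\iota(\bbE_8)$ itself relies on the same trace map (one needs $\tau_P^*(D)-D\in\bbZ K_X$ for $D\in K_X^\perp$, which the paper gets from $\deg_f(D)=-mD\cdot K_X=0$ and $\ker\tr=\bbZ K_X$), so it is not independent of the missing computation.
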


\begin{proof} Let $f:X\to \bbP^1$ be the elliptic fibration defined by $|-mK_X|$ and $X_\eta$ be its generic fiber. 
Assume $X$ is unnodal. Then $X_\eta$ is a smooth elliptic curve over the field $\bbK(\eta)$ of rational functions on $\bbP^1$ (see the 
beginning of Section \ref{general}). The closure in $X$ of a closed point $\zeta\in X_\eta$ of degree $d(\zeta) = [\bbK(\zeta):\bbK(\eta)]$ is an irreducible curve $\bar{\zeta}$ such that the restriction of $\pi$ to the curve is a finite cover of degree $d(\zeta)$. Conversely, any irreducible curve $C$ of relative degree $d$ (a $d$-multi-section) intersects $X_\eta$ at a closed point of degree $d$. In the case of an unnodal Halphen surface, any exceptional curve $E_i$ is of relative degree $m$ and the degree of any closed point on $X_\eta$ is a multiple of $m$. In particular, $X_\eta$ has a structure of an abelian variety only if $m = 1$. 

Let $J_\eta = \mathbf{Pic}_{X_\eta/\eta}^0$ be the Jacobian variety of $X_\eta$. Now it is an abelian curve which acts on $X_\eta \cong \mathbf{Pic}_{X_\eta/\eta}^1$ by translations. In particular, $J_\eta(\eta)$ acts by $\eta$-automorphisms on $X_\eta$. Explicitly, the action is defined by the formula
\beq\label{action}
t_\fraka(x) = x' \sim   x+\fraka, \quad \fraka\in J_\eta(\eta),
\eeq
where $x$ is a closed point of $X_\eta$ over the algebraic closure of $\bbK(\eta)$. Since $\fraka$ is defined over $\bbK(\eta)$,  this guarantees that the action is defined over $\eta$. 

It follows from the theory of relative minimal models that the action of $J_\eta(\eta)$ on $X_\eta$ extends to a biregular action of $X/\bbP^1$. 
Thus we obtain an injective morphism
\[
J_\eta(\eta)  \hookrightarrow \Aut(X).
\]
By taking the closure of a divisor on $X_\eta$, we have a natural surjective restriction homomorphism 
\[
\tr:K_X^\perp \to J_\eta(\eta).
\]
Its kernel is equal to the subgroup $\Pic_{\fib}(X)$ of $\Pic(X)$ generated by irreducible components of fibers.  If $X$ is unnodal, $\Pic_{\fib}(X) = \bbZ K_X$, hence $J_\eta(\eta)\cong \bbZ^8$. This proves that $\Aut(X)$ contains a subgroup isomorphic to $\bbZ^8$. 

It remains to find out  how this abelian group $J_\eta(\eta)$ acts on $\Pic(X)$. Any effective divisor $D$ on $X$ is a sum of irreducible components of fibers and the rest, which we call the {\bf{horizontal part}}. The restriction of the fibration $f:X\to \bbP^1$ to the horizontal part is a finite cover of degree equal to 
\[
\deg_f(D) = -mD\cdot K_X.
\]
 We say that an effective divisor is {\bf{separable}} if its  horizontal part is a separable cover of $\bbP^1$. If $\cha(\bbK)$ does not divide $m$,  any effective divisor is separable. 

For any divisor class $D$ on $X$ let $\tr(D)$ be its scheme-theoretical intersection  with the generic fiber$ X_\eta$. If $D$ is separable and irreducible, then $\tr(D)$ is a closed point of $X_\eta$ which is equal to  the sum of $\deg_f(D)$  closed points over the algebraic closure of $\bbK(\eta)$. Thus  
$t_\fraka(\tr(D)) = \tr(D)+\deg_f(D)\fraka$. Let $A$ be an element of $K_X^\perp$ such that $\tr(A) = \fraka$. Then  
\[
t_\fraka(D) \sim   D-m(D\cdot K_X)A \mod \Pic_{\fib}(X).
\]
At this point, we assume that $X$ is unnodal, so that 
\[
t_\fraka(D) \sim   D-m(D\cdot K_X)A +\lambda K_X
\]
for some integer $\lambda$. Intersecting both sides with $D$ and $D' = t_\fraka(D)$, and using that 
$D^2 = D'{}^2$, we obtain $\lambda = \frac{m}{2}(D\cdot A+D'\cdot A)$. Intersecting both sides with $A$, we obtain  $D'\cdot A = D\cdot A-m(K\cdot D)A^2$. Combining the two  formulas, we get 
\beq
t_\fraka(D) \sim D-m(D\cdot K_X)A+[m(D\cdot A)-\frac{m^2}{2}(D\cdot K_X)A^2]K_X.
\eeq
(cf. \cite{Gizatullin:1980}, Proposition 9, where the sign in front of $m^2$ must be changed). This should be 
compared with formula \eqref{iota}.
Since $\Pic(X)$ is generated by separable effective divisors (e.g. by the divisor classes $e_0,\ldots,e_9$), this determines the action of $J_\eta(\eta)$ on $\Pic(X)$. Restricting to $K_X^\perp$, we obtain
$$t_\fraka(D) \sim D+m(D\cdot A)K_X.$$
So, the image  of $K_X^\perp/\bbZ K_X$ in $\Aut(X)$ acts on $K_X^\perp$ as the subgroup $\iota(mK_X^\perp)$ of $W_X$. This proves the first assertion of the proposition.

Assume now that $X$ is not unnodal. The set of $(-2)$-curves on $X$ is not empty, 
and coincides with the set of irreducible components of reducible  fibers of $f$. The group $\Aut(X)$ permutes the elements of this finite set. Thus, a finite index 
subgroup of $\Aut(X)$ fixes all the divisor classes of these $(-2)$-curves. The subgroup of all elements of $\bbE_8\subset W_9$ that fix such a class has infinite index in $\bbE_8$; more precisely, the rank of this free abelian group is at most 
$8-k$, where $k$ is the dimension of the subspace which is spanned by classes of $(-2)$-curves.
 This implies that $\Aut(X)$ has infinite index in $W_9$, and therefore that $X$ is not Cremona 
special.
\end{proof}

\begin{remark} Consider the variety  $\widetilde{\calHa}(m)^{\gen} \subset (\bbP^2)^9$ of ordered unnodal Halphen sets $(p_1,\ldots, p_9)$. It follows from the Hilbert-Mumford numerical criterion of stability that the GIT quotient $\widetilde{\calHa}^{\gen}/\!/\SL(3)$ exists and parameterizes the orbits of unnodal Halphen ordered sets of index $m$. The group $W_9$ acts on this space regularly by means of the Coble action (see \cite{Dolgachev-Ortland:Ast}).  Since  two Halphen surfaces are isomorphic if and only if the corresponding Halphen sets are projectively equivalent, we obtain that the subgroup $m\bbE_8$ acts trivially on this variety, hence the quotient group $G = W_9/\bbE_8 \cong (\bbE_8/m\bbE_8)\rtimes W_8$ acts on the orbit space. 

Since all automorphisms of a Halphen surface preserve the elliptic pencil, we have a natural homomorphism $\rho:\Aut(X)\to \Aut(\bbP^1)$ whose image is a finite subgroup 
preserving the set of points corresponding to singular non-multiple fibers. If $m > 1$, it must also fix the multiple fiber, so the group is a cyclic group which  has orbits of cardinality $\le 12$.   The kernel of $\rho$ is a subgroup of $\Aut(X_\eta)$ which is a finite extension of $J_\eta$ by a cyclic group of order dividing 24 (6 if $\cha(\bbK) \ne 2,3$). If $m \le 2$ the group of order 2 is always present. It corresponds to the automorphism $x\mapsto -x$ of the generic fiber if $m = 1$ and the double cover $X_\eta\to \bbP_\eta^1$ given by a 2-section if $m = 2$. For $X$ general enough, we have  $\Aut(X) \cong \bbZ^8\rtimes (\bbZ/2\bbZ)$ if $m \le 2$ and $\Aut(X) \cong \bbZ^8$ otherwise. The image of the generator of $(\bbZ/2\bbZ)$ in $\Aut(X)^*$ is equal to an element of $W_9$ which is mapped to the center of the group $W_8 = W_9/\iota(\bbE_8)$. 

\end{remark}


\section{Coble surfaces}\label{Coble}


The construction of Cremona special point sets with $10$ points is due to Coble (see \cite{CoblePaper}, \cite{Coble:Book}). 
A {\bf{Coble surface}} is a rational smooth surface $X$ such that the linear system $|-K_X|$ is empty, but $|-2K_X|$ is not. The classification of such surfaces can be found in \cite{DolgachevZhang}. In what follows, we only need the special case, where we additionally assume  that $K_X^2 = -1$ and $|-2K_X|$ consists of an irreducible curve $C$.  So, in this paper, a Coble surface is always assumed to be such a surface. A {\bf{Coble set}}  is a point set $\calP$ such that the blow-up of $\calP$ is a Coble surface.

In this section, we study Coble surfaces, and show that unnodal Coble surfaces  are Cremona special. We
provide a  proof, which works in any characteristic. Most arguments and constructions of this section are used in 
Section \ref{par:GC} where we prove the Main Theorem.

\subsection{From Coble   to Halphen surfaces} 
Let $X$ be a Coble surface, and $C$ be an irreducible curve in the linear system $| -2K_X|$.
By definition, the arithmetic genus $p_a(C)$ satisfies
$$p_a(C) = 1+\frac{1}{2}(C^2+C\cdot K_X) = 1+K_X^2 = 0.$$
Thus $C$ is a smooth rational curve with self-intersection $C^2 = 4K_X^2 = -4$.

\begin{proposition}\label{CobletoHalphen} Let $X$ be a Coble surface and $\pi_E:X\to Y$ be the blowing down of a $(-1)$-curve $E$.
Then 
\begin{itemize}
\item  $Y$ is a Halphen surface of index 2;
\item  $C$ is the proper transform of the fiber $F$ containing $y_0 = \pi_E(E)$;
\item the fiber $F$ is irreducible, and $y_0$ is its unique singular point.
\end{itemize}
Conversely, the blow-up of a singular point of an irreducible non-multiple fiber of a Halphen surface of index 2
is a Coble surface.
\end{proposition}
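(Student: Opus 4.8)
The plan is to track the canonical class and the systems $|-K|$ and $|-2K|$ through the blow-down $\pi_E\colon X\to Y$, using $K_X=\pi_E^*K_Y+E$ together with the projection-formula identity $h^0(Y,L)=h^0(X,\pi_E^*L)$.

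For the direct implication, note first that $E^2=K_X\cdot E=-1$, so $K_Y^2=(K_X-E)^2=K_X^2+2(K_X\cdot E)-E^2=-1+2-1=0$, and $Y$ is a smooth rational surface. The heart of the matter is to prove that $|-2K_Y|$ is a base-point-free pencil and that the index is exactly $2$. I would first pin down $h^0(-K_Y)=1$: Riemann--Roch on the rational surface $Y$ gives $h^0(-K_Y)\ge K_Y^2+1=1$, while the sequence $0\to\calO_X(-K_X)\to\calO_X(-K_X+E)\to\calO_E\to0$ (the restriction $(-K_X+E)|_E$ has degree $0$) yields $h^0(-K_Y)=h^0(X,-K_X+E)\le h^0(-K_X)+1=1$, the vanishing $h^0(-K_X)=0$ being exactly the Coble hypothesis. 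Thus $|-K_Y|=\{F_0\}$. Pushing $C$ forward, $\pi_{E*}[C]=\pi_{E*}(-2K_X)=-2K_Y$, so $F:=\pi_E(C)$ is an irreducible member of $|-2K_Y|$; together with the non-reduced member $2F_0$ this gives $\dim|-2K_Y|\ge1$. Since $F$ is reduced and irreducible, $|-2K_Y|$ has no fixed component, and $F\cdot F_0=(-2K_Y)\cdot(-K_Y)=2K_Y^2=0$ forces $F\cap F_0=\emptyset$, so $|-2K_Y|$ is base-point-free. As $(-2K_Y)^2=4K_Y^2=0$ and $|-2K_Y|$ has an irreducible member, the associated morphism has one-dimensional image, whence $\dim|-2K_Y|=1$; so $Y$ is Halphen, of index $2$ because $|-K_Y|$ is $0$-dimensional.

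The second and third bullets follow by bookkeeping. Comparing $[C]=-2K_X=\pi_E^*F-2E$ with $[C]=\pi_E^*F-(\mult_{y_0}F)E$ gives $\mult_{y_0}F=2$, and $C\cdot E=-2K_X\cdot E=2>0$ shows $y_0=\pi_E(E)\in F$; since the pencil is base-point-free, $F$ is the unique fiber through $y_0$ and $C$ is its proper transform. Because $C\cong\bbP^1$ is smooth and $\pi_E|_C\colon C\to F$ is birational, $C$ is the normalization of $F$, and the genus-drop formula $p_a(C)=p_a(F)-\binom{2}{2}=1-1=0$ shows the total $\delta$-invariant of $F$ equals $1$. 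Hence $F$ has a single node or cusp, necessarily located at $y_0$, and is smooth elsewhere.

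For the converse, let $\pi\colon X\to Y$ be the blow-up at a singular point $y_0$ of an irreducible non-multiple fiber $F\sim -2K_Y$ of a Halphen surface $Y$ of index $2$, with $E=\pi^{-1}(y_0)$ and $F_0\in|-K_Y|$ the half-fiber. Then $X$ is smooth rational and $K_X^2=K_Y^2-1=-1$. Being non-multiple, $F$ is disjoint from the multiple fiber $2F_0$, so $y_0\notin F_0$; restricting $0\to\calO_X(-K_X)\to\calO_X(\pi^*(-K_Y))\to\calO_E\to0$ to $E$ shows the map $H^0(\pi^*(-K_Y))\to H^0(\calO_E)$ is an isomorphism, so $h^0(-K_X)=0$ and $|-K_X|=\emptyset$. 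Since $\mult_{y_0}F=2$ (as $F$ is irreducible of arithmetic genus $1$ with a singular point, hence $\delta=1$), the proper transform $C=\pi^*F-2E$ represents $-2K_X$ and is irreducible, so $|-2K_X|\ne\emptyset$. Finally, filtering $\pi^*F$ by $E$ and using $h^0(-2K_Y)=2$ together with the fact that a general fiber avoids $y_0$ while $F$ absorbs $E$ with multiplicity $2$, one gets $h^0(\pi^*F-E)=1$ and hence $h^0(-2K_X)=h^0(\pi^*F-2E)\le1$; thus $|-2K_X|=\{C\}$ is a single irreducible curve and $X$ is a Coble surface.

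The main obstacle is to control the \emph{exact} dimensions of $|-K_Y|$, $|-2K_Y|$ (and, dually, $|-K_X|$, $|-2K_X|$) rather than mere (non)effectivity; this is achieved by pairing Riemann--Roch with the restriction sequences along $E$, with the intersection identity $F\cdot F_0=0$ providing base-point-freeness. In positive characteristic the only change is that the singular point of $F$ may be a cusp, as in the quasi-elliptic case, which the $\delta$-invariant argument handles uniformly.
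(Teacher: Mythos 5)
Your proof is correct and follows essentially the same route as the paper's: compute $K_Y^2=0$, show $h^0(-K_Y)=1$ with $F_0$ avoiding $y_0$ (using the Coble hypothesis $|-K_X|=\emptyset$), exhibit the disjoint members $F$ and $2F_0$ of $|-2K_Y|$ to get a base-point-free irreducible pencil, and reverse the bookkeeping for the converse. The only differences are cosmetic refinements on your side — obtaining $h^0(-K_Y)\le 1$ via the restriction sequence along $E$ rather than the paper's "a second member of $|-K_Y|$ could be forced through $y_0$" argument, and the $\delta$-invariant computation pinning down the exact dimensions and the uniqueness of the singular point, which the paper leaves implicit.
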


\begin{proof} Since $K_X^2=-1$, we get $K_Y^2=0$. Let $y_0$ be the image of $E$  by $\pi_E$.
Let $F$ be the image of $C$; since $C$ is irreducible, so is $F$. 
Since $C\cdot E=-2K_X\cdot E=2$, the curve $F$ is singular at $y_0$.
>From $\pi_E^*(F)=C+2E$, we deduce that $F$ is an irreducible curve in the linear system $|-2K_Y|$; in particular, 
$F^2=0$ and $-K_Y$ is nef.

By Riemann-Roch, $h^0(-K_Y) > 0$. If $h^0(-K_Y) > 1$, we can find a curve $G$ from $|-K_Y|$ passing through $y_0$, and thus 
$\pi^*(G)-E$ is effective and  $|-K_X|\ne \emptyset$, contradicting the definition of a Coble surface. Thus $h^0(-K_Y) = 1$ and the unique effective divisor $F_0$ in $|-K_Y|$ does not pass through $y_0$. This implies that $|-2K_Y|$ contains two linearly independent divisors, namely  $2F_0$ and $F$, hence $h^0(-2K_Y) \ge 2$. Since $K_Y^2=0$, $2F_0$ and $F$ are disjoint, and the linear system $|-2K_Y|$ has no fixed components; since
it contains the reduced and irreducible curve $F$, this pencil is irreducible. Thus $Y$ is a Halphen surface of index $2$.

Conversely, let $X$ be obtained from a Halphen surface $Y$ of index 2 as indicated in the assertion of the proposition. Since the irreducible fiber $F$ belongs to $|-2K_Y|$ and  its singular point is blown up, the linear system $|-2K_X|$ is not empty   and contains the proper transform of $F$, which is a smooth rational curve. Moreover,  $|-K_X|$ is empty because $|-K_Y|$ consists of the unique multiple fiber $F_0$ of $Y$ and $F_0$ does not pass through the point which we blow up. So, $X$ is a Coble surface.
\end{proof}

Let $\pi:X\to \bbP^2$ be the composition of $\pi_E:X\to Y$ and the blow-down morphism $\pi':Y\to \bbP^2$ which is described in Proposition \ref{pro:halphen}. The image $F$ of $C$ in $Y$ belongs to $|-2K_Y| = |6e_0-2(e_1+\cdots+e_9)|$, hence its image in the plane is an irreducible plane curve of degree $6$ with $10$ singular points $p_1,\ldots, p_{10}$,  maybe with some infinitely near points. This  set $\{p_1,\ldots,p_{10}\}$ is a Coble set of $10$ points.  It contains a Halphen set of index $2$; the remaining tenth point corresponds to the singular point of $F$. Conversely, starting from a set of $10$ singular points $p_1,\ldots, p_{10}$ of an irreducible curve of degree  $6$, we choose a point $p_i$ such that  no other point is infinitely near it.  The remaining set of $9$ points is a Halphen point set 
of index 2.

\subsection{Unnodal Coble surfaces}

By Lemma \ref{-ncurves}, a Coble surface $X$  has no $(-n)$-curves with $n \ge 3$ except the unique curve in $|-2K_X|$ (with self-intersection $-4$).  
If $X$ is unnodal and $\pi_E:X\to Y$ is the blowing down map of a $(-1)$-curve $E$, then $Y$ is an unnodal Halphen surface of index $2$. Otherwise the pre-image, in $X$, of a component of a reducible fiber would define a $(-2)$-curve or a $(-3)$ curve if $\pi_E(E)$ is a singular point of a reducible fiber of the elliptic pencil on $Y$. 
However, the converse is not generally true; one needs to impose more conditions on the point set to ensure that $X$ is unnodal. 

\begin{theorem}\label{discr} A Coble surface is unnodal if and only if it is obtained from a Coble set $\{p_1,\ldots,p_{10}\}$ satisfying the following 496 conditions:
\begin{itemize}
\item[(i)] no points among the ten points are infinitely near;
\item [(ii)] no three points are collinear;
\item [(iii)] no six points lie on a conic;
\item [(iv)] no plane cubic passes through $8$ points with one of them being a singular point of the cubic;
\item[(v)] no plane quartic curve passes through the $10$ points with one of them being a triple point.
\end{itemize}
\end{theorem}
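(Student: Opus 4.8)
The plan is to treat the two implications separately, the substance lying in sufficiency. Write a prospective irreducible $(-2)$-curve in the geometric basis as $R=de_0-\sum_{i=1}^{10}m_ie_i$; from $R\cdot K_X=0$ and $R^2=-2$ one gets
\[
\sum_{i=1}^{10}m_i=3d,\qquad \sum_{i=1}^{10}m_i^2=d^2+2,
\]
so $\bar R=\pi(R)$ is a rational plane curve of degree $d$ with ordinary multiple points $m_i$ at the $p_i$ and no other singularity. For necessity I would argue by contraposition: if one of (i)--(v) fails, such a curve is produced outright---the class $e_i-e_j$ for an infinitely near point, $e_0-e_i-e_j-e_k$ for three collinear points, $2e_0-\sum_6 e_i$ for six points on a conic, $3e_0-2e_a-\sum_7 e_b$ for a nodal cubic through eight points, and $4e_0-3e_a-\sum_9 e_b$ for a quartic with a triple point through all ten, each a smooth rational $(-2)$-curve.

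For sufficiency, assume (i)--(v) and suppose $R$ exists. The key device is the contraction of Proposition \ref{CobletoHalphen}. Whenever some $m_i=0$, the curve $\bar R$ avoids $p_i$, and contracting the $(-1)$-curve $E_i$ (a $(-1)$-curve by (i), with $C\cdot E_i=2$) presents $R$ as an irreducible $(-2)$-curve on the index-$2$ Halphen surface $Y_i$ built from the nine remaining points. But (i)--(iv) restricted to $\calP\setminus\{p_i\}$ are precisely the hypotheses of Proposition \ref{gen}, so $Y_i$ is unnodal and no such curve exists. This single step kills every class that omits a point: the lines, conics and nodal cubics of (ii)--(iv), as well as the quartic $4e_0-2(e_a+e_b+e_c)-\sum_6 e_d$, which has a zero multiplicity. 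Hence $\bar R$ must pass through all ten points; then $3d=\sum m_i\ge 10$ gives $d\ge 4$, and for $d=4$ the only solution of the identities is the triple quartic $4e_0-3e_a-\sum_9e_b$, excluded by (v).

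It remains to exclude an irreducible $(-2)$-curve through all ten points with $d\ge5$. I would first record the numerical lemma that, with $m_1\ge\cdots\ge m_{10}$, the two identities force $m_1+m_2+m_3>d$: were $m_1+m_2+m_3\le d$, then $m_i\le d/3$ for $i\ge3$, and a short optimization of $\sum m_i^2$ under the constraint $\sum m_i=3d$ yields $\sum m_i^2\le d^2<d^2+2$. Thus the quadratic transformation $T$ centred at $p_1,p_2,p_3$ (distinct and non-collinear by (i),(ii)) strictly lowers $\deg\bar R$. Since $T$ carries the ten-nodal sextic $\bar C\in|-2K_X|$ to another irreducible ten-nodal sextic, it carries $\calP$ to a Coble set $T(\calP)$ and $R$ to an irreducible $(-2)$-curve of strictly smaller degree on the isomorphic Coble surface. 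Iterating returns us to the situation $d\le4$ or to a curve omitting a point, where the previous paragraph applies.

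The genuine difficulty is to make this induction bite, because the finite list of $496$ classes in (i)--(v) is not $W_{10}$-invariant: I must know that $T(\calP)$ is again subject to (i)--(v). I would organise this around the equivalence "$\calP$ satisfies (i)--(v) $\Longleftrightarrow$ every nine-point contraction $Y_i$ is an unnodal Halphen surface and (v) holds," which repackages (i)--(iv) as unnodality statements to which Proposition \ref{gen} applies uniformly (a collinear triple, a conic through six, or a nodal cubic through eight always omits some $p_i$ and so is seen by the corresponding $Y_i$). Cremona-stability of the right-hand side is then checked by following the action of the single reflection $s_0\in W_{10}$ on the listed classes, verifying that each is sent either to another listed class or to one---such as the reducible quartic above---whose effectivity already violates (ii) through the descent step. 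Equivalently, and most cleanly, one exploits that on the unnodal Halphen surface $Y_i$ the translations of Theorem \ref{Halphen-CS} are honest automorphisms fixing the node $y_0$, hence lift to automorphisms of $X$, so that the reductions can be carried out by genuine isomorphisms of Coble surfaces; the verification that these isomorphisms preserve conditions (i)--(v) is exactly where the main work lies.
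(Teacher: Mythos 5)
Your reduction is attractive and partially overlaps the paper's: the observation that any irreducible $(-2)$-curve missing one of the ten points descends to the nine-point Halphen surface $Y_i$, which is unnodal by Proposition \ref{gen} under (i)--(iv), correctly disposes of every root with a zero coefficient (the paper uses exactly this descent for the class $4e_0-(e_1+\cdots+e_{10})-e_i-e_j-e_k+e_s$), and your enumeration for $d=4$ is right. But the case you yourself flag --- an irreducible $(-2)$-curve of degree $d\ge 5$ through all ten points --- is a genuine gap, not a technicality. The Cremona descent that works in Proposition \ref{P3} works there because the surface is \emph{assumed} Cremona special, so that $\calR_X=\emptyset$ is available at every stage; here you only know the finite list (i)--(v), which is not $W_{10}$-stable, so after one quadratic transformation you have no hypothesis left to run the induction. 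Your proposed repair via the translations of Theorem \ref{Halphen-CS} lifted to $X$ (they do fix the node of the distinguished fiber, so they lift) only replaces the problem by another unproved one: you would need to show that the subgroup of $W_{10}$ generated by the ten copies of $\iota(2\bbE_8)$ moves every root into the orbit of one with a zero coefficient or of the triple-point quartic. That is essentially a Coble-type generation/orbit statement about $W_{10}(2)$, and proving it is the missing content; invoking Theorem \ref{autocoble} instead would be circular, since its proof uses Lemma \ref{-2}, which assumes the absence of $(-2)$-curves.

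The paper closes this case by a completely different mechanism. It introduces $\Delta=-3K_X+e_0=\tfrac13(f_1+\cdots+f_{10})$, shows (using that the $f_i$ are irreducible and, via condition (v), that $|\Delta|$ restricts to a complete degree-$6$ system on the curves $F_i$) that $\Delta$ is nef and big and defines a birational morphism contracting only $C$; then for a $(-2)$-curve $R$ one has $R\cdot\Delta=m_0=\deg\bar R$, the cases $m_0\le 4$ are enumerated and excluded by (i)--(v) plus the descent to $Y_s$, and for $m_0>4$ one shows (following Cossec--Dolgachev) that $R\cdot w(\Delta)>4$ for \emph{all} $w\in W_X$, whereupon $w=s_R$ gives $R\cdot s_R(\Delta)=-R\cdot\Delta<0$, a contradiction. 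Some version of this orbit-plus-reflection argument (or of the generation statement for $W_{10}(2)$) is what your proof still needs to supply for $d\ge 5$.
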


This result is due to A. Coble \cite{CoblePaper}, (10). A modern proof was sketched  in \cite{Cossec-Dolgachev}, Remark 4.7. We supply the full details here. 

\begin{proof}[Proof of Theorem \ref{discr}]   It is known (and is easy to check) that the lattice $\bbE_{10} \cong K_X^\perp$   is a unimodular even lattice of signature $(1,9)$, hence isomorphic to the orthogonal sum $\bbE_8\perp \bbH$, where $\bbH$ is the hyperbolic plane defined by the matrix 
{\tiny $
\begin{pmatrix}0&1\\
1&0\end{pmatrix}.
$}
Explicitly, the $\bbE_8$-part is generated by the vectors $\boldsymbol{\alpha}_i, i = 0,\ldots, 7$.
The sublattice generated by $\bbE_8$ and $\boldsymbol{\alpha}_{8}$ is isomorphic to $\bbE_9$. The radical of this copy of
$\bbE_9$ and the vector $\boldsymbol{\alpha}_{9}$ generate the $\bbH$-part. 
Suppose  conditions (i)-(v) are satisfied. 
Since there are no infinitely near points, $X$ is obtained from $\bbP^2$ by blowing up 
$10$ distinct points~$p_i$. We denote by $E_i$ the corresponding $10$ exceptional divisors,
and denote by $(e_0, e_1, \ldots, e_{10})$ the standard basis of $\Pic(X)$ (see Section \ref{par:WXWn}).

Let 
$$f_i = 3e_0-(e_1+\cdots+e_{10})+e_i, \ i = 1,\ldots,10.$$
We have $f_i\cdot f_j = 1-\delta_{ij}$. Let $Y_i$ be the Halphen surface of index 2 obtained from $X$ by blowing down the curve $E_i$. Then $f_i$ is the divisor class of the proper transform of the half-fiber of the elliptic fibration on $Y_i$. By Proposition \ref{gen}, the first four conditions guarantee that the  Halphen surfaces $Y_i$  are unnodal. The pencil $|2f_i|$ is equal to the   pre-image of the elliptic pencil on $Y_i$; it contains only one reducible fiber, namely $C+2E_i$. 

We have 
\[
f_1+\cdots+f_{10} = 30 e_0-9(e_1+\cdots+e_{10}) = 3\Delta,
\]
with
\[
\Delta = 10e_0-3(e_1+\cdots+e_{10}) = -3K_X+e_0, \quad \Delta^2 = 10, \quad \Delta\cdot K_X = 0.
\]
The linear system $|\Delta|$ is the proper transform of the linear system of plane  curves of degree $10$ passing through the points $p_i$ with multiplicities $\ge 3$. By counting constants, or applying Riemann-Roch, $\dim |\Delta| \ge 5$. Since the divisor classes $f_i$ are represented by irreducible curves, the divisor class $\Delta $ is nef and big. An  irreducible curve $R$ on $X$ with $R\cdot \Delta = 0$ satisfies $R\cdot f_i = 0,$ for all $i = 1,\ldots,10$, hence must coincide with the curve $C$. Thus $|\Delta|$ defines a  morphism 
\[
\phi:X\to \bbP^{\dim |\Delta|}
\] 
that contracts $C$ onto a point $z$. 

Consider the restriction of the linear system $| \Delta |$ to a general member $F_i$ of $|2f_i|$. It is a linear series of degree $6$. Since 
\[
\Delta-F_i \sim \Delta-2f_i = 4e_0-(e_1+\cdots+e_{10})-2e_i,
\]
by condition (v), this divisor class can not be effective. This shows that the restriction of $\phi$ to $F_i$ is given by a complete linear system of
degree $6$. Thus, $\phi_{| F_i}$ is an isomorphism onto a normal elliptic curve of degree $6$ spanning  $\bbP^{\dim |\Delta|}$, 
$\dim |\Delta| = 5$ and $\phi$ is a birational isomorphism onto a  surface of degree $10$. This surface has a quotient
singularity of type $\frac{1}{4}(1,1)$ at $z=\phi(C)$ .

The remaining arguments follow the proof of Theorem 4.4 from \cite{Cossec-Dolgachev}, almost verbatim. Let $R$ be a $(-2)$-curve and 
$ m_0e_0-m_1e_1-\ldots-m_{10}e_{10}$ be its divisor class. We have $m_0 = R\cdot e_0 \ge 0$ and $m_i = R\cdot E_i \ge 0$. Since
$\Delta = -3K_X+e_0$, we have $\Delta\cdot R = e_0\cdot R = m_0$. Suppose first that $m_0 \le 4$. Then listing all solutions of the diophantine equations
\begin{eqnarray*}
3m_0 & = & m_1+ \ldots + m_{10}, \\
m_0^2 + 2 & = & m_1^2+ \ldots + m_{10}^2
\end{eqnarray*}
with $m_0 \le 4$, we find that $R$ is contained in an exceptional curve coming from an infinitely near point, or is equal to the proper transform of one of the curves corresponding to conditions $(ii)$ to $(v)$, or belongs to the class 
$4e_0-(e_1+\ldots +e_{10})-e_i-e_j-e_k+e_s$ with four distinct indices $i,j,k,s$. All  cases except the last one are prohibited by the assumptions of the theorem. In the last case, the curve $R$ does not intersect  the exceptional curve $E_s$; hence $R$ is coming from a $(-2)$-curve on the Halphen surface $Y_s$ obtained by blowing down $E_s$. It follows from Proposition \ref{gen}, that in this case there exists a conic passing through the six points $p_a$ with $a \ne i,j,k,s$. This is prohibited by condition (iii). Thus, there is no 
$(-2)$-curve $R$ with $m_0\leq 4$.

So, assume now that $m_0 > 4$ for any $(-2)$-curve $R$.  Repeating the argument of the proof of Theorem 4.4 from \cite{Cossec-Dolgachev},  $R\cdot w(\Delta) > 4$ for all $w\in W_X$. Taking $w$ to be the reflection with respect to $R$, we obtain $R\cdot w(\Delta) = w(R)\cdot \Delta = -R\cdot \Delta < 0$, a contradiction. 
\end{proof}

\begin{remark} Theorem \ref{discr} shows that unnodal Coble sets form a Zariski open subset in the set of all Coble sets. In particular, an unnodal Coble surface is a general Coble surface in sense of moduli.
\end{remark}
\subsection{Effective divisors on unnodal Coble and Halphen surfaces}\label{par:roots}
Recall that an element of $\bbE_n$ of norm $-2$ is called a {\bf root}. The lattice is spanned by (simple) roots $\boldsymbol{\alpha}_i$ (cf. Section \ref{par:Minkowski-Lattice}). A {\bf real root} is  a root which belongs to the $W_n$-orbit of one (or any) of these simple roots. It is known that all roots are real if and only if $n \le 10$ (see, for example,  \cite{Dolgachev-Ortland:Ast}, Remark 5, p. 79). 
Let $X$ be a rational surface obtained by blowing up a point set $\calP$ with $\vert \calP \vert=n$. 
Using an isomorphism  $\bbE_n\to K_X^\perp$ defined by a choice of a geometric basis on $X$, we can transfer these definitions to elements of the lattice $K_X^\perp$; since $W_X$ acts transitively on the set of geometric bases, the definitions do not depend on the choice of a geometric basis. 

We say that a root $\alpha$ in $K_X^\perp$  is {\bf effective}, if it can be represented by the divisor class of an effective divisor. Clearly, the divisor class of a $(-2)$-curve is an example of an effective  root.

Later on we shall use the following result   due to M. Nagata \cite{Nagata:II} which applies to Coble and Halphen surfaces. For the sake of completeness, we include a proof.

\begin{lemma}\label{-2} Let $X\to \bbP^2$ be  the blow-up of $n \ge 9$ points such that  $|-mK_X|$  contains an irreducible curve for some $m > 0$. Suppose that $X$ has no $(-2)$-curves. Then,
\begin{itemize}
\item[(0)] for any effective divisor $D$ and any $w\in W_X$, the class $w([D])$ is effective;
\item [(i)]  for any $(-1)$-curve $E$ and any $w\in W_X$, the linear system  $|w(E)|$ consists of  a unique $(-1)$-curve;
\item[(ii)] for any simple root $\alpha_i$ and any $w\in W_X$, the linear system  $|w(\alpha_i)|$  is empty;
\item[(iii)] for any primitive isotropic effective divisor class $f$ and any $w\in W_X$,  the divisor class $w(f)$ is  a primitive isotropic effective divisor class.
\end{itemize}
\end{lemma}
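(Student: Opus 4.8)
The plan is to deduce all four statements simultaneously by inducting on the length of $w$ as a word in the generators $s_i$, where $s_i(x)=x+(x\cdot\alpha_i)\alpha_i$. The induction is powered by the fact that each $s_i$ is a reflection in a vector $\alpha_i\in K_X^\perp$, so $s_i(K_X)=K_X$; hence every $w\in W_X$ preserves the intersection form and fixes $K_X$. Consequently $W_X$ permutes the three numerically defined sets appearing in the statement: roots (square $-2$ in $K_X^\perp$), exceptional classes (square $-1$ with $v\cdot K_X=-1$), and primitive isotropic classes. Thus $w(\alpha_i)$ is a root, $w(E)$ an exceptional class, and $w(f)$ primitive isotropic automatically, and only the \emph{effectivity} and \emph{irreducibility} assertions carry content. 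Writing $w=s_i w'$ and applying the inductive hypothesis to $w'$, everything reduces to a single generator $s_i$ applied to an honest effective divisor, or to an honest $(-1)$-curve, on $X$.

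Two standing inputs feed every generator step. First, by hypothesis there is an irreducible curve $\Gamma\in|-mK_X|$; since $W_X$ fixes $K_X$, we have $w(\Gamma)=\Gamma$ for all $w$. Second, because $\Gamma$ is irreducible, every irreducible curve $B\ne\Gamma$ satisfies $B\cdot\Gamma\ge0$, i.e. $B\cdot K_X\le0$. Feeding this into the adjunction formula $B^2=2p_a(B)-2-B\cdot K_X$ shows that the only irreducible curves of negative self-intersection are $(-1)$-curves, $(-2)$-curves, and possibly $\Gamma$ itself; as $X$ has no $(-2)$-curve, the list reduces to \emph{$(-1)$-curves and $\Gamma$}. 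This classification is the combinatorial backbone of the whole argument.

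For (0) I would reduce the generator step to irreducible curves: if $D=\sum a_jC_j$ then $s_i(D)=\sum a_j s_i(C_j)$, so it suffices to show $s_i(C)$ effective for each irreducible $C$. By the classification, $C$ is either $\Gamma$ (and $s_i(\Gamma)=\Gamma$), a $(-1)$-curve (handled by (i) in the simultaneous induction), or a curve with $C^2\ge0$. In the last case $C$ lies in the closed positive cone; since $s_i$ fixes a vector of positive square in $\alpha_i^\perp$, it preserves the positive cone, so $s_i(C)$ again lies in it, and Riemann--Roch gives $h^0(s_i(C))\ge 1+\tfrac12(C^2-C\cdot K_X)\ge1$ once $h^0(K_X-s_i(C))=0$, which follows from $(K_X-s_i(C))\cdot A\le K_X\cdot A<0$ for an ample $A$ (using $-K_X\cdot A=\tfrac1m\Gamma\cdot A>0$). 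Parts (ii) and (iii) are then corollaries of (0): since $w^{-1}\in W_X$, the class $w(\alpha_i)$ is effective iff $\alpha_i$ is, and a simple root is never effective because an effective representative would be an honest $(-2)$-curve (the proper transform of a line through three of the points for $\alpha_0$, or the exceptional $(-2)$-curve of an infinitely near point for $\alpha_i=e_i-e_{i+1}$); this gives $|w(\alpha_i)|=\emptyset$. For (iii), $w(f)$ is primitive isotropic by the structural remarks and effective by (0).

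The hard part, and the main obstacle, is the generator step of (i): for an honest $(-1)$-curve $E$, the exceptional class $D'=s_i(E)$ must be represented by a \emph{unique, irreducible} $(-1)$-curve (the case $E\cdot\alpha_i=0$ being trivial since then $D'=E$). Effectivity comes from Riemann--Roch, $h^0(D')+h^0(K_X-D')\ge1$, once the vanishing $h^0(K_X-D')=0$ is secured; irreducibility must be extracted from the absence of $(-2)$-curves together with the classification above (a nontrivial decomposition of an effective class of square $-1$ forces a $(-2)$-curve into the support); and uniqueness is then automatic, since an irreducible curve of negative self-intersection spans a zero-dimensional linear system. The genuinely delicate point is the vanishing. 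When $K_X^2=0$ (the Halphen case $n=9$), $-K_X=\tfrac1m\Gamma$ is nef, so any effective $G\equiv K_X-D'$ would give $0\le G\cdot(-K_X)=-K_X^2-1=-1$, a contradiction. When $K_X^2<0$ (so $n\ge10$, including the Coble case), $-K_X$ is no longer nef and $\Gamma$ has negative self-intersection $\Gamma^2=m^2(9-n)$; here I would rule out $K_X-D'$ by intersecting a hypothetical effective representative with $\Gamma$ and peeling off the forced copies of $\Gamma$ via the mechanism of Lemma~\ref{-ncurves}(2), or, if a uniform argument proves awkward, reduce the Coble case to the already-settled Halphen case by blowing down a $(-1)$-curve as in Proposition~\ref{CobletoHalphen}. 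This boundary behaviour for $n\ge10$ is where the absence of $(-2)$-curves does its essential work and where the argument needs the most care.
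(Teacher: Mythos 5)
Your overall architecture (simultaneous induction, the classification of negative curves into $(-1)$-curves and $\Gamma$, deducing (ii) and (iii) from (0), and Riemann--Roch for effectivity) is sound and close in spirit to the paper's, but the proof has a genuine gap exactly where you flag ``the most care'' is needed, and the paper's mechanism for closing it is absent from your proposal. The paper does not try to make $-K_X$ or an ample class do the vanishing $h^0(K_X-w(E))=0$; instead it first proves, by a separate induction on word length with an explicit computation at the generator $s_{\alpha_0}$, the inequality \eqref{claim}: for any curve $Z$ with $Z^2\ge -1$ and any $w\in W_X$, one has $w([Z])\cdot e_0>0$ unless $w([Z])=e_j$. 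Since $e_0=\pi^*\calO_{\bbP^2}(1)$ is nef and $(K_X-w([Z]))\cdot e_0=-3-w([Z])\cdot e_0<0$, the class $K_X-w([Z])$ is never effective, uniformly in $n$ and for \emph{all} $w$ at once (no induction on generators is needed for this step). Your substitute for $n\ge 10$ --- ``peel off forced copies of $\Gamma$'' or ``reduce to the Halphen case by blowing down'' --- is not an argument; in the Coble case $\Gamma\cdot K_X=m(n-9)>0$ and $-K_X$ is not nef, so nothing forces $(K_X-D')\cdot\Gamma$ to have a useful sign, and the blow-down does not obviously transport the statement for an arbitrary $w\in W_X$.

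The second gap is your parenthetical claim that ``a nontrivial decomposition of an effective class of square $-1$ forces a $(-2)$-curve into the support.'' This is not justified and is not clear: by your own classification the support of such a decomposition consists of $(-1)$-curves, $\Gamma$, and curves of nonnegative square, and for $n\ge 10$ combinations such as $a G+s\Gamma+Z$ with $G$ a $(-1)$-curve are not excluded by any numerical identity you have established. The paper avoids proving irreducibility ``locally'' altogether: it writes an effective representative of $w(E)$ as $sC+\sum a_iG_i+Z$, applies $w^{-1}$, invokes part (0) for $w^{-1}$ to make every summand effective, and then uses $|E|=\{E\}$ (and $E\ne C$) to force $s=0$, $Z=0$, $k=1$. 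This transport-by-$w^{-1}$ trick is the idea you are missing; without it (or a genuine proof of your parenthetical), part (i) --- and hence the $(-1)$-curve case of your generator step for (0), which quotes (i) --- does not close.
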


\begin{proof}  Let $C$ be an element of $|-mK_X|$. For any irreducible curve $Z\ne C$, we have $Z\cdot C = -mZ\cdot K_X\ge 0$. By adjunction formula, $Z^2 \ge -2$; since $X$ does not contain $(-2)$-curves, all irreducible curves $Z\neq C$ satisfy $Z^2\geq -1$. Moreover, $Z^2 = -1$ implies that $Z$ is a $(-1)$-curve.

Let $Z$ be a curve with $Z^2 \ge -1$ and let $de_0-d_1e_1-\ldots-d_ne_n$ be its divisor class. We have $Z^2 = d^2-d_1^2-\ldots-d_n^2\geq -1$, and $d = [Z]\cdot e_0 > 0$ unless $Z$ is one of the exceptional curves $E_i$. 
We claim that, for any $w\in W_X$, 
\beq\label{claim}
w([Z])\cdot e_0 > 0, \quad {\text{ unless }}\;  w([Z]) = e_i \; {\text{ for some }}\;  i > 0.
\eeq
We use induction on the length $l(w)$ of $w$ as a word in simple reflections. Write $w = s_{i}w'$ with $l(w') < l(w)$, where $s_i$ 
is the reflection given by the simple root $\alpha_i$ (see \S \ref{par:WXWn}). 
By induction $w'([Z])\cdot e_0 > 0$ or $w'([C]) = e_j$ for some $j > 0$.  In the latter case, $s_{i}(e_j)$ is either equal to  $e_k$ for 
some index $k$ or to $e_0-e_a-e_b$ for some indices $a\neq b>0$; so the assertion  is true in the latter case. In the first case, the claim is 
obvious for $\alpha_i\neq \alpha_0$; so we  assume $\alpha_i = \alpha_0$. Let  $w'([Z]) = de_0-d_1e_1-\ldots-d_ne_n$ with $d\geq 1$. Then $w([Z]) = w'([Z])+(w'([Z])
\cdot \alpha_0)\alpha_0$ implies that $w([Z])\cdot e_0 = 2d-d_1-d_2-d_3.$ 
Assume it is negative, i.e. 
$1\leq d < (d_1+d_2+d_3)/2$. Then 
$$-1\le d^2-d_1^2-\cdots-d_n^2 < \frac{1}{4}(d_1+d_2+d_3)^2-d_1^2-\cdots-d_n^2 $$
$$< \frac{3}{4}(d_1^2+d_2^2+d_3^2)-d_1^2-\ldots-d_n^2.$$ 
This gives
$$-4 < -d_1^2-d_2^2-d_3^2-4(d_4^2-\ldots-d_n^2),$$
hence $d_4 = \ldots=d_n = 0$ and $d_1,d_2,d_3 \le 1$ ; since $2\leq 2 d < d_1+d_2+ d_3$ we obtain 
\[
w'([Z]) = e_0-e_1-e_2-e_3=\alpha_0.
\]
This contradicts $Z^2\geq -1$ and proves the claim. 

(0) Let $[D]$ be the class of an irreducible curve, with $[D]\not \in \bbQ [C]$. For all $w\in W_X$, we get 
$w([D])\cdot K_X  =  [D]\cdot w^{-1}(K_X) = D\cdot K_X = -\frac{1}{m}D\cdot C\le 0$; moreover, $D^2\geq -1$.
By Riemann-Roch, $w([D])$ or $[K_X-w(D)]$ is effective. Intersecting $[K_X-w(D)]$ with $e_0$, and using \eqref{claim}, we see that $[K_X-w(D)]$ is not effective; hence $w([D])$ is effective.  

Let now $D$ be any effective divisor. Write $D$ as a sum of irreducible components 
\[
D=sC+\sum_{j=1}^{k} b_j Z_j
\]
where $s\geq 0$, $b_j>0$, and $Z_j$ is an irreducible curve with $[Z_j]\not \in \bbQ [C]$ for all $1\leq j\leq k$.
Then all classes $w([Z_j])$ are effective. Since $w([D])=s[C]+\sum_j b_j w([Z_j])$, the class $w([D])$ is effective.

(i) Let $E$ be a $(-1)$-curve which we identify with its divisor class. As we saw above, $w(E)$ is effective.  Write an effective representative of $w(E)$ as a sum of irreducible components. Since $w(E)^2 = -1 < 0$, some of the components must have negative self-intersection. Thus we can write 
\beq\label{eff}
w(E) \sim sC+a_1G_1+\ldots+a_k G_k+Z,
\eeq
where $G_1,\ldots,G_k$ are $(-1)$-curves, $Z$ is a sum of irreducible curves $Z_j$ with $Z_j^2 \ge 0$, and at least one of the coefficients $s, a_1, \ldots, a_k$ is positive. Applying $w^{-1}$ we get 
$$
E \sim sC+a_1w^{-1}(G_1)+\ldots+a_kw^{-1}(G_k)+w^{-1}(Z).
$$
>From Property (0), we know that the classes $w^{-1}(G_i)$, $1\leq i\leq k$, and $w^{-1}(Z)$ are effective.
Since $E$ is a $(-1)$-curve,  we have $|E| = \{E\}$. Also $E \ne C$ since $E^2 = E\cdot K_X = -1$ but 
$C^2 = m^2K_X^2$ and $C\cdot K_X = -mK_X^2$ cannot be both equal to $-1$. This implies $s = 0, Z = 0, k = 1$ and $E = w^{-1}(G_1)$; thus $|w(E)|=\{G_1\}$ and Property (i) is proved.

(ii) A simple root $\alpha_i = e_i-e_{i+1}$ is effective if and only if there are infinitely near points, which is excluded since we know  that $X$ has no $(-n)$-curves with $n \ge 2$. The simple root $\alpha_0 = e_0-e_1-e_2-e_3$ is effective if and only if the points $p_1,$ $p_2$, $p_3$ are collinear. The proper transform of the corresponding line is a $(-2)$-curve, hence this root is not effective. Suppose $w(\alpha_i)$ is effective for some index $i\geq 0$ and some element $w$ in $W_X$.
Write an effective representative as in Equation \eqref{eff}: 
\[
w(\alpha) \sim sC+a_1G_1+\ldots+a_k G_k+Z.
\]
Applying $w^{-1}$, and using the same argument as in (i), we get a contradiction.

(iii)  By (0) we know that $f'=w(f)$ is effective. This class is isotropic because $w$ preserves the intersection form. It is primitive because otherwise $f=w^{-1}(f')$ is not primitive. 
\end{proof}

\subsection{Automorphisms of unnodal Coble surfaces} \label{par:Unnodal-Coble}

Here we provide a  proof, valid in any characteristic,  of the following theorem which is implicitly contained in \cite{CoblePaper} (expressed in terms  of projective orbits of point sets, see \S \ref{1.5}); several steps of the proof are used in Section \ref{par:GC}.

\begin{theorem}\label{autocoble} Let $X$ be an unnodal Coble surface. Then $\Aut(X)^*$ contains the subgroup of $W_X$ which is isomorphic to
\[
W_{10}(2) := \{w\in W_{10}: w(v)-v\in 2\bbE_{10}\quad  \text{for all $v\in \Pic(X)$}\}
\]
under the natural identification of $W_X$ with $W_{10}$.
\end{theorem}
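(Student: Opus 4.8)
The plan is to realize the whole congruence subgroup $W_{10}(2)$ inside $\Aut(X)^*$ by producing, for \emph{every} $(-1)$-curve on $X$, a rank-$8$ lattice of translation automorphisms coming from an auxiliary Halphen surface of index $2$, and then to show that these lattices generate $W_{10}(2)$. Fix a $(-1)$-curve $E$ on $X$ and let $\pi_E\colon X\to Y$ be its contraction. By Proposition \ref{CobletoHalphen}, $Y$ is a Halphen surface of index $2$, and since $X$ is unnodal $Y$ is unnodal as well: a $(-2)$-curve on $Y$ would pull back to a $(-2)$- or $(-3)$-curve on $X$. Theorem \ref{Halphen-CS} then supplies a subgroup of $\Aut(Y)^*$ whose image in $W_Y\cong W_9$ is $\iota(2\bbE_8)$, realized by the translations $t_\fraka$ of the Jacobian $\Jac(Y_\eta)$. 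Each such translation commutes with the fibration and fixes the unique singular point $y_E=\pi_E(E)$ of the special fibre (it acts on that fibre through the group law on its smooth locus), so it lifts to an automorphism of $X$ fixing $E$. Denote by $T_E\subset\Aut(X)^*$ the resulting group, isomorphic to $2\bbE_8\cong\bbZ^8$.

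The next step is to check $T_E\subset W_{10}(2)$. Setting $m=2$ in the translation formula from the proof of Theorem \ref{Halphen-CS} gives $t_\fraka(D)-D\in 2\Pic(Y)$ for every $D$, i.e. $t_\fraka\equiv\id$ modulo $2$ on $\Pic(Y)$. A lift $\widetilde{g}$ of $t_\fraka$ fixes $[E]$ and acts on the orthogonal summand $\pi_E^*\Pic(Y)=[E]^\perp$ through $t_\fraka$, hence $\widetilde{g}(v)-v\in 2\Pic(X)$ for all $v\in\Pic(X)$. Since $\widetilde{g}$ preserves $K_X$, the difference $\widetilde{g}(v)-v$ lies in $K_X^\perp$; together with $2\Pic(X)\cap K_X^\perp=2\bbE_{10}$ this yields $\widetilde{g}(v)-v\in 2\bbE_{10}$, so $\widetilde{g}^{*}\in W_{10}(2)$.

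Now let $\calT\subset\Aut(X)^*$ be generated by all the $T_E$ as $E$ ranges over the $(-1)$-curves of $X$. The embedding defining $T_E$ is given by a $W_{10}$-equivariant lattice formula, so $w\,\iota_E(2A)\,w^{-1}=\iota_{w(E)}\bigl(2\,w(A)\bigr)$ for every $w\in W_{10}$. Since the classes of $(-1)$-curves form a single $W_{10}$-orbit and each is represented by a unique $(-1)$-curve on the unnodal surface $X$ (Lemma \ref{-2}(i)), conjugation carries each $T_E$ to another $T_{E'}$. Thus $\calT$ is a \emph{normal} subgroup of $W_{10}$ contained in $W_{10}(2)$, and everything reduces to the equality $\calT=W_{10}(2)$.

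This last equality is the main obstacle. I would attack it through the finite quotient $W_{10}/\calT$, which surjects onto $W_{10}/W_{10}(2)\cong\Or(\bbE_{10}/2\bbE_{10})$; the aim is to prove this surjection injective. Because $\calT$ is normal and contains the copy $\iota(2\bbE_8)$ sitting inside the parabolic $W_9=\langle s_0,\dots,s_8\rangle$ that stabilises $e_{10}$, the image of $W_9$ in $W_{10}/\calT$ is a quotient of $(\bbE_8/2\bbE_8)\rtimes W_8$, which is precisely the image of the corresponding stabiliser in $\Or(\bbE_{10}/2\bbE_{10})$. Comparing this across the ten parabolics attached to $e_1,\dots,e_{10}$ should force the two orders to coincide, the delicate part being the bookkeeping of the mod-$2$ reduction and the verification that the translation lattices in these ten directions already span $W_{10}(2)$. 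A cleaner alternative, if available, would be to invoke an explicit description of the level-$2$ congruence subgroup of $W(E_{10})$ and match a generating set directly against the $\iota_E(2\bbE_8)$.
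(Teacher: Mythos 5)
Your construction is genuinely different from the paper's. The paper does not use the translation groups of the ten auxiliary Halphen surfaces at all: it studies the linear system $|2F_1+2F_2|$, shows it defines a degree-$2$ morphism onto a $4$-nodal quartic Del Pezzo surface in $\bbP^4$, and takes the deck involution $\tau$ of that cover, which acts on $\Pic(X)$ as $\id_{\bbH}\oplus-\id_{\bbE_8}$ for the hyperbolic plane $\bbH=\langle f_1,f_2\rangle$. Conjugating by $W_X$ (using Lemma \ref{-2} exactly as you do) produces such an involution for every pair $w(f_1),w(f_2)$, and the proof concludes by citing the non-trivial theorem of Coble (\cite{Dolgachev-Cossec:Book}, Theorem 2.10.1) that the minimal \emph{normal} subgroup of $W_{10}$ containing $\id_{\bbH}\oplus-\id_{\bbE_8}$ is precisely $W_{10}(2)$. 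The parts of your argument that you actually carry out are essentially sound: the translations of $\Jac(Y_\eta)$ do fix the unique singular point of the fibre through $\pi_E(E)$ and hence lift to $X$; the $m=2$ translation formula does put the lifts in $W_{10}(2)$ (your reduction $2\Pic(X)\cap K_X^\perp=2\bbE_{10}$ is fine); and the conjugation-equivariance making $\calT$ normal is correct.

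The genuine gap is the final step, which you yourself flag as ``the main obstacle'': you never prove that the normal closure $\calT$ of the lattices $\iota_E(2\bbE_8)$ equals $W_{10}(2)$, and without it the theorem as stated is not established (you would only get that $\Aut(X)^*$ contains \emph{some} nontrivial normal subgroup of $W_{10}$ inside $W_{10}(2)$, and $W_{10}$, being a lattice in a rank-one group, has many normal subgroups strictly between $\{1\}$ and $W_{10}(2)$ a priori). This is not a routine verification: the analogous fact the paper needs --- for the involution $\id_{\bbH}\oplus-\id_{\bbE_8}$ rather than for $\iota(2\bbE_8)$ --- is itself described there as ``a non-trivial result due to A.~Coble,'' and the statement for the translation lattices is a different one that does not obviously follow from it (the $\iota_E(2A)$ are parabolic Eichler-type transvections, not involutions, and no element of type $\id_{\bbH}\oplus-\id_{\bbE_8}$ is exhibited in your construction). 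Your proposed strategy via comparing the ten parabolic stabilisers in the finite quotient $W_{10}/\calT$ with those in $\Or(\bbE_{10}/2\bbE_{10})$ is only a heuristic as written; to complete the proof along your lines you would either have to carry out that computation in full, or locate and cite a reference identifying the normal (or ordinary) closure of the level-$2$ translation subgroups inside $W(E_{10})$. Until then the proof is incomplete at its decisive step.
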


\begin{remark}
The subgroup $W_{10}(2)$ is obviously normal, and the quotient group is isomorphic to the finite orthogonal group 
$O^+(10,\bbF_2)$ (see \cite{Dolgachev-Cossec:Book}, Theorem 2.9).
We have to explain the meaning of the notation $O^+(10,\bbF_2)$. Up to conjugacy, there
are only two types of non-degenerate quadratic forms over $\bbF_2^{10}$.
Modulo $2$, the intersection form on $K_X^\perp$ is equivalent to the quadratic form 
\[
x_1x_2+ x_3x_4 + \ldots + x_9x_{10}.
\]
In other words, this quadratic form is of {\bf{even type}}; the number of its isotropic vectors (including the null vector) 
is $2^4(2^{5}+1)$. The notation $O^+(10,\bbF_2)$ is meant to distinguish this quadratic form from the form of {\bf odd type}
$ x_1x_2+ x_3x_4 + \ldots + x_7x_{8}+ x_9x_{10}+x_9^2+ x_{10}^2$, which has only $2^4(2^{5}-1)$ isotropic vectors. 
\end{remark}

To prove Theorem \ref{autocoble}, we use the notations from the proof of Theorem \ref{discr}. 
Consider the elliptic fibrations $|2f_1|$ and $|2f_2|$ defined by the first two isotropic vectors among $f_1,\ldots,f_{10}$. 
>From Proposition \ref{CobletoHalphen}, each of these elliptic fibrations comes from a Halphen surface of index $2$, and each $ |f_i|$ 
is reduced to a unique element; we denote these curves by  $F_1\in |f_1|$ and $F_2\in |f_2|$. Let $|D| = |2F_1+2F_2|$. 

\begin{lemma}\label{lem:System-D} The linear system $|D| = |2F_1+2F_2|$ has no base points and defines a morphism $\phi:S\to \bbP^4$.  
\end{lemma}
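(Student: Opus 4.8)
The plan is to show that the linear system $|D|=|2F_1+2F_2|$ is base-point free by combining the geometry coming from Proposition \ref{CobletoHalphen} with the vanishing results provided by the unnodal hypothesis. First I would record the numerical data: since $f_i^2=0$ and $f_1\cdot f_2=1$, one computes $D^2 = 4f_1^2+4f_2^2+8f_1\cdot f_2 = 8$ and $D\cdot K_X = -2(f_1+f_2)\cdot 2K_X$; using $f_i = -K_X + (\text{correction})$ and $f_i\cdot K_X = -1$ one gets $D\cdot K_X=-4$, so by Riemann--Roch and Serre duality $h^0(D)\geq \tfrac12 D\cdot(D-K_X)+1 = \tfrac12(8+4)+1 = 7$, and since $\dim\bbP^4=4$ this leaves room. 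The heart of the argument, however, is to rule out base points rather than to count sections.

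The approach I would take is to reduce the base-point freeness of $|2F_1+2F_2|$ to the two elliptic pencils $|2f_1|$ and $|2f_2|$ separately. The pencils $|2f_1|$ and $|2f_2|$ are each base-point free, since by Proposition \ref{CobletoHalphen} each $|2f_i|$ is (the pullback of) the Halphen elliptic pencil on the surface $Y_i$ obtained by blowing down $E_i$, and Halphen pencils have no base points by Proposition \ref{pro:halphen}(iv). The natural strategy is then to write the complete linear system $|2F_1+2F_2|$ and argue that a common base point would have to be a base point of one of the sub-pencils, which is impossible; more carefully, I would use the product map and the fact that $F_1$ and $F_2$ are distinct irreducible half-fibers (recall each $|f_i|$ is a single irreducible curve in the unnodal case) whose classes generate a hyperbolic plane, so that the two fibrations have distinct fibers through any given point. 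A general point lies on a unique member of each pencil, and these members meet $F_1\cdot F_2 = f_1\cdot f_2 = 1$ transversally at one point, which pins down the geometry.

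The cleanest way to finish, which I would pursue, is to invoke the restriction-and-vanishing technique used in the proof of Theorem \ref{discr}. I would restrict $|D|$ to a general member $F_1$ of $|2f_1|$: on $F_1$ the restricted system has degree $D\cdot F_1 = (2F_1+2F_2)\cdot F_1 = 4 f_1\cdot f_2 = 4$ on a curve of arithmetic genus $1$, hence is base-point free (degree $\geq 3$ on a genus-one curve is very ample, and in particular base-point free). To lift this to $X$ I would use the exact sequence $0\to \calO_X(D-2f_1)\to \calO_X(D)\to \calO_{F_1}(D)\to 0$ together with the surjectivity of $H^0(\calO_X(D))\to H^0(\calO_{F_1}(D))$, which follows from $H^1(\calO_X(D-2f_1))=H^1(\calO_X(2f_2))=0$; this last vanishing is exactly the kind of statement supplied by the unnodal hypothesis via Lemma \ref{-2} and the nefness of $f_2$. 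Sweeping over the pencil $|2f_1|$, which covers $X$ off the base locus of that pencil (and $|2f_1|$ has empty base locus), shows $|D|$ has no base points.

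The main obstacle I anticipate is the cohomology vanishing $H^1(\calO_X(2f_2))=0$ and the analogous control over the few special fibers: a generic $F_1$ is smooth of genus one, but I must ensure the restriction argument is not spoiled by the special (possibly multiple or singular) members of the pencils, and that the potential base point at the image of the contracted curve $C$ — recall $|-2K_X|$ contracts to a singular point under the $|\Delta|$-morphism in Theorem \ref{discr} — does not reappear here. I would handle this by checking $D\cdot C = 2(f_1+f_2)\cdot(-2K_X)$ directly to confirm $C$ is not contracted by $|D|$, and by using the unnodality to guarantee all members of $|2f_i|$ are irreducible so that no reducible fiber introduces a spurious common zero. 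Verifying these vanishings and the behavior along $C$ is where the real work lies; the rest is the standard restriction-sequence bootstrap.
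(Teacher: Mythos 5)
There is a genuine gap, and it sits exactly where you anticipated ``the real work lies.'' Your bootstrap requires $H^1(\calO_X(2f_2))=0$, but this group does not vanish: since $f_2\in K_X^\perp$ and $f_2^2=0$, Riemann--Roch gives $\chi(\calO_X(2f_2))=1$, while $h^0(\calO_X(2f_2))=2$ (it is a pencil) and $h^2=0$, so $h^1(\calO_X(2f_2))=1$. Concretely, for a member $G\in|2f_1|$ one has $h^0(\calO_G(D))=4$ but the image of $H^0(\calO_X(D))\to H^0(\calO_G(D))$ has dimension at most $h^0(D)-h^0(D-G)=5-2=3$, so the restriction is never surjective and the cut-out linear series on $G$ is an incomplete $g^2_4$, which you cannot conclude is base-point free. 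The sweeping argument over the pencil $|2f_1|$ therefore collapses. The missing idea is much simpler: since $2F_1+2F_2$ is itself a member of $|D|$, any base point must lie on $F_1\cup F_2$, so one only needs to control the restriction of $|D|$ to the two half-fibers $F_1,F_2$ (of class $f_i$, not $2f_i$). The paper does this with a chain of three exact sequences computing $h^0(F_1)=1$, $h^0(F_1+F_2)=2$, $h^0(2F_1+F_2)=3$, $h^0(2F_1+2F_2)=5$ with vanishing $h^1$ at each stage; the relevant twist is $D-F_1=F_1+2F_2$, for which $h^1=0$ does hold, so $|D|$ restricts to the \emph{complete} degree-$2$ system on $F_1$, which is base-point free on a curve of arithmetic genus $1$.

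Two further problems. First, your Riemann--Roch count is wrong: $f_i=-K_X+e_i$ lies in $K_X^\perp$, so $D\cdot K_X=0$ (not $-4$) and Riemann--Roch gives only $h^0(D)\ge 5$; your bound $h^0(D)\ge 7$ would actually contradict the statement that $\phi$ maps to $\bbP^4$, and in any case the equality $\dim|D|=4$ (which the lemma asserts and which your argument never establishes) requires the exact-sequence computation above. Second, your proposed sanity check on $C$ is backwards: $D\cdot C=(2f_1+2f_2)\cdot(-2K_X)=0$, so $C$ \emph{is} contracted by $\phi$ (this is used later in the paper to produce the singular point of the quartic Del Pezzo surface); this is harmless for base-point freeness but your claim that you would ``confirm $C$ is not contracted'' would fail.
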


\begin{remark}\label{rem:System-D}
The proof of this lemma does not use the fact that $-2K_X$ is effective; it depends only on the intersection properties
of $f_1$, $f_2$ and $K_X$, and the fact that $\vert f_i \vert = \{F_i\}$ for some irreducible curve. This will be used in 
Section \ref{par:System-D-II} (see Lemma \ref{lem:D}).
\end{remark}

\begin{proof}  Consider the following three exact sequences 
 \begin{eqnarray}\label{three}
 &&0 \to \calO_X(F_1)\to \calO_X(F_1+F_2)\to \calO_{F_1}(F_1+F_2) \to 0,\\ \notag
 &&0 \to \calO_X(F_1+F_2)\to \calO_X(2F_1+F_2)\to \calO_{F_1}(2F_1+F_2) \to 0,\\ \notag
 &&0 \to \calO_X(F_1+2F_2)\to \calO_X(2F_1+2F_2)\to \calO_{F_1}(2F_1+2F_2) \to 0.\notag
 \end{eqnarray}
  
Since  $\deg   \calO_{F_1}(F_1+F_2) = 1$ and $p_a(F_1) = 1$, we get $h^0(\calO_{F_1}(F_1+F_2)) = 1$; similarly, $h^0(\calO_{F_1}(F_2)) =1$.

Since $h^0(\calO_X(F_1)) = 1,$   Riemman-Roch formula implies $h^1(\calO_X(F_1)) = 0$. Thus  the first exact sequence shows that $h^0(\calO_X(F_1+F_2)) = 2$, and, by Riemann-Roch, $h^1(\calO_X(F_1+F_2)) = 0$. Since $\deg \calO_{F_1}(2F_1+F_2) = 1$, the second exact sequence gives $h^0(\calO_X(2F_1+F_2)) = 3$. By Riemann-Roch, $h^1(\calO_X(2F_1+F_2)) = 0$. Since $\deg \calO_{F_1}(2F_1+2F_2) = 2$, the third exact sequence gives $h^0( \calO_X(2F_1+2F_2)) =  5$, hence $\dim |D| = 4$.

Since $F_1$ and $F_2$ are irreducible and $h^0(\calO_X(2F_1+2F_2))> h^0(\calO_X(F_i+2F_j))$, we obtain that $|D|$ has no fixed components. Let us now assume that $|D|$ has a base point. Such a point  must lie on $F_1$ or $F_2$ and, without loss of generality, we may assume that it lies on $F_1$. The third exact sequence shows that the restriction map  
\[
H^0(\calO_X(2F_1+2F_2))\to H^0(\calO_{F_1}(2F_1+2F_2))
\]
is a  surjective morphism onto a complete linear system of degree $2$ on $F_1$. 
Thus $|D|$ has no base point on $F_1$, and $|D|$ has no base point at all.
\end{proof}

\begin{lemma}\label{lem:System-D2}
The image of $\phi\colon X \to \bbP^4$ is a surface $S$ of degree $4$.
\end{lemma}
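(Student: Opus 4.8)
The plan is to compute $D^2$ and the degree of the morphism $\phi$ separately, and then combine them through the relation $\deg\phi\cdot\deg S=D^2$, valid for a generically finite morphism defined by a base-point-free system. First I would record that, since $f_1^2=f_2^2=0$ and $f_1\cdot f_2=1$ (from the proof of Theorem~\ref{discr}), one has $D^2=(2f_1+2f_2)^2=8$. As each $F_i$ is an irreducible curve with $F_i^2=0$, it is nef, so $D=2F_1+2F_2$ is nef with $D^2=8>0$, hence nef and big; combined with the base-point freeness from Lemma~\ref{lem:System-D}, this shows that $\phi$ is generically finite onto its image $S$ (in particular $\dim S=2$, since otherwise $D=\phi^*H$ would be pulled back from a curve and $D^2=0$), and that $\deg\phi\cdot\deg S=8$. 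Because $|D|$ is a \emph{complete} linear system, the image spans $\bbP^4$, so $S$ is a nondegenerate irreducible surface and therefore $\deg S\ge 3$; consequently $\deg\phi\le 2$, and it remains only to prove that $\phi$ is not birational.

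The non-birationality is the heart of the matter, and I would extract it from the auxiliary pencil $|L|$ with $L=F_1+F_2$. From $L^2=2$ and $L\cdot K_X=0$, adjunction gives $p_a(L)=2$, and the computation $h^0(\calO_X(F_1+F_2))=2$ made inside the proof of Lemma~\ref{lem:System-D} shows that $|L|$ is a pencil; as $L^2\neq 0$ it is not composite with a pencil, so by Bertini its general member $L_t$ is an integral curve of arithmetic genus $2$. I would then control the restriction of $|D|$ to $L_t$. Tensoring the structure sequence of $L_t$ by $\calO_X(D)$ and using $D-L_t\sim L$ gives
\[
0\to\calO_X(L)\to\calO_X(D)\to\calO_{L_t}(D)\to 0,
\]
and since $h^0(\calO_X(D))=5$, $h^0(\calO_X(L))=2$, while $h^0(\calO_{L_t}(D))=4-2+1=3$ by Riemann--Roch on $L_t$, the restriction $H^0(\calO_X(D))\to H^0(\calO_{L_t}(D))$ is \emph{surjective}. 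Thus $\phi|_{L_t}$ is given by the full system $|\calO_{L_t}(D)|$.

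Next I would identify $\calO_{L_t}(D)$ with the bicanonical sheaf. By adjunction $\omega_{L_t}\cong\calO_{L_t}(K_X+L)$, whence $\calO_{L_t}(D)\otimes\omega_{L_t}^{\otimes(-2)}\cong\calO_{L_t}(2L-2K_X-2L)\cong\calO_{L_t}(-2K_X)\cong\calO_{L_t}(C)$, where $C$ is the curve in $|-2K_X|$. Since $C\cdot L=-2K_X\cdot(F_1+F_2)=0$ and $C$ is irreducible and distinct from the general $L_t$, the two curves are disjoint, so $\calO_{L_t}(C)\cong\calO_{L_t}$ and therefore $\calO_{L_t}(D)\cong\omega_{L_t}^{\otimes 2}$. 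The bicanonical system of an integral genus-$2$ curve factors through the degree-$2$ map $L_t\to\bbP^1$ defined by $|\omega_{L_t}|$, so $\phi|_{L_t}$ is $2$-to-$1$ onto a conic. As $t$ varies the curves $L_t$ sweep out $X$, and the hyperelliptic involution of $L_t$ sends a general point to a \emph{distinct} point with the same $\phi$-image; hence $\phi$ is not injective on a dense open set, i.e. $\deg\phi\ge 2$. Together with $\deg\phi\le 2$ this forces $\deg\phi=2$ and $\deg S=D^2/2=4$.

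The step I expect to be the main obstacle is securing the behaviour of the general member $L_t$ of $|F_1+F_2|$: one must guarantee that $L_t$ is integral of arithmetic genus $2$ and that $\phi$ does not contract it, so that the fibrewise $2$-to-$1$ bicanonical structure genuinely globalizes to non-birationality of $\phi$. This is exactly where the unnodal hypothesis enters — via the absence of $(-2)$-curves (Lemma~\ref{-2}) it should force $|L|$ to have integral general members and to make $C$ meet $L_t$ in the expected empty locus, which is what the disjointness $C\cdot L=0$ argument silently relies on.
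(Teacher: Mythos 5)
Your proof is correct and follows essentially the same route as the paper: both compute $\deg\phi\cdot\deg S=D^2=8$, exclude degenerate images, and then show non-birationality by identifying the restriction of $|D|$ to a general member of the genus-$2$ pencil $|F_1+F_2|$ with the bicanonical system, which maps $2$-to-$1$ onto a conic. Your added details (surjectivity of restriction via Riemann--Roch on $L_t$, and disjointness of $C$ from $L_t$ via $C\cdot L=0$) are exactly the facts the paper's terser argument relies on.
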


\begin{proof}
Since the map is given by the complete linear system $|D|$, its image spans $\bbP^4$ and its degree divides $D^2 = 8$. So, there are only two possibilities: 
$\phi$ is $1$-to-$1$ and its image has degree $8$, and $\phi$ is $2$-to-$1$ and its image has degree $4$; its image can not have degree $\leq 2$ because every quadric surface of $\bbP^4$ is contained in a hyperplane. 

We know that $|F_1+F_2|$ is an irreducible pencil. Let $P$ be its general member.  It is an irreducible  curve of arithmetic genus $2$. Let $\omega_P$ be its canonical sheaf. Since $\calO_{F_i}(C) \cong \calO_{F_i}(-2K_X) \cong \calO_{F_i}$, the adjunction formula gives 
\[
\omega_P^{\otimes 2} \cong \calO_{P}(2F_1+2F_2+2K_X)\cong  \calO_{P}(2F_1+2F_2).
\]
 The exact sequence
\beq\label{es2}
0\to \calO_X(F_1+F_2) \to \calO_X(2F_1+2F_2) \to \calO_{P}(2F_1+2F_2) \to 0,
\eeq
shows that $|D|$ cuts out on $P$ the bi-canonical linear system,  hence the map $\phi_{|P}$ is of degree 2 onto a  plane conic. In particular, the degree of $\phi:X\to \phi(X)$ cannot be equal to 1, hence it is of degree 2 onto a quartic surface $S$ in $\bbP^4$. 
\end{proof}

 Recall that an {\bf{anti-canonical Del Pezzo surface}} $S$ of degree $d\ge 3$ is a surface of degree $d$ in $\bbP^d$ whose minimal resolution is isomorphic to the blow-up $V$  of $9-d$ points in $\bbP^2$ (maybe infinitely near) with $-K_V$ nef and big. Each such surface $S$ is obtained as the image of $V$ by the map given by the linear system $|-K_V|$. An anti-canonical Del Pezzo surface may have singularities (when $-K_V$ is not ample). They are Du Val singularities (or ADE-singularities). 

It is classically known that a surface of degree $4$ in $\bbP^4$ that spans $\bbP^4$ is either an anti-canonical  Del Pezzo, or a cone over an elliptic curve, or  a projection of a surface of degree 4 in $\bbP^5$ (see \cite{Nagata:I,Nagata:II} and \cite{Dolgachev:Topics}). Since $X$ is rational, and $\phi$ is given by a complete linear system there is only one possibility: $S$ is an anti-canonical  Del Pezzo surface.

\begin{lemma}\label{base} The image $S$ of $X$ under the map $\phi:X\to \bbP^4$ is a Del Pezzo surface of degree 4 with four ordinary double points.
\end{lemma}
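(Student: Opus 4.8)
The plan is to pin down the singular locus of $S$. Since $S$ is an anticanonical Del Pezzo surface of degree $4$ (just established), it is normal with only Du Val singularities, and the root lattice attached to such a surface is of type $D_5$; as $D_5$ contains at most four pairwise orthogonal roots, $S$ carries at most four nodes. It therefore suffices to prove that every singular point of $S$ is an ordinary double point and that there are exactly four of them. I would organize the singular points into those arising as the image of a curve contracted by $\phi$ and those lying in the open locus where $\phi$ is finite.

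First I would locate the contracted curves. Because $D=2(F_1+F_2)$ with $F_1,F_2$ nef, a curve $\Gamma$ is contracted exactly when $\Gamma\cdot f_1=\Gamma\cdot f_2=0$. Writing $f_i=-K_X+e_i$, a $(-1)$-curve $\Gamma$ other than the $E_i$ satisfies $\Gamma\cdot f_i=1+\Gamma\cdot E_i\ge 1$, and $E_i\cdot f_j=0$ only for $j=i$, whereas $C\cdot f_i=-2K_X\cdot f_i=0$ since $f_i\in K_X^\perp$. As $X$ is unnodal its only curves of negative self-intersection are the $(-1)$-curves and the $(-4)$-curve $C$ (Lemma \ref{-ncurves}), so $C$ is the unique $\phi$-contracted curve and maps to a single point $q_0$. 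Identifying the type of $q_0$ is the first delicate point: contracting $C$ alone would produce a cyclic quotient singularity of type $\tfrac14(1,1)$ --- precisely the point $z=\phi(C)$ met in the proof of Theorem \ref{discr} --- yet the degree-$2$ structure of $\phi$ must reduce it to a node. I would settle this by passing to the minimal resolution $V\to S$ and the induced map from $X$, and checking that over $q_0$ the degree-$2$ cover forces a single resolving $(-2)$-curve, so that $q_0$ is an ordinary double point.

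Next I would extract the remaining nodes from the finite part of $\phi$. Off $C$ the morphism is a finite double cover, whose covering involution $\sigma$ restricts on a general member $P$ of the genus $2$ pencil $|F_1+F_2|$ as the hyperelliptic involution, because $\phi|_P$ is the bicanonical map (see the proof of Lemma \ref{lem:System-D2}). Hence the fixed locus of $\sigma$ is a horizontal branch (Weierstrass) curve together with finitely many isolated fixed points, and the images of the latter are nodes of $S$, each automatically ordinary since $\sigma$ is locally $(x,y)\mapsto(-x,-y)$ there. I would then count these isolated fixed points by following the degenerate members of $|F_1+F_2|$ --- where the six Weierstrass points of the nearby smooth fibres collide, in particular on the reducible members supporting $C$, $E_1$ and $E_2$ --- and show there are exactly three. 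Together with $q_0$ this gives four ordinary double points, which by the a priori bound is the maximum, so $S$ is the (projectively rigid) four-nodal quartic Del Pezzo surface.

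I expect the main obstacle to be twofold and to lie entirely in the two local/counting steps above. The local study at $q_0$ must reconcile the $\tfrac14(1,1)$ coming from the $(-4)$-curve with the node forced by the cover, which requires a careful comparison of $X$, the resolution $V$, and the finite double cover near $q_0$. More seriously, the enumeration of the three isolated fixed points has to be made uniform in all characteristics: in characteristic $2$ the cover $\phi$ is wildly ramified and the fibres of $|F_1+F_2|$ are no longer tamely hyperelliptic, so the naive count of colliding Weierstrass points must be replaced by a conductor or Euler-characteristic computation insensitive to the characteristic. Controlling this count is the crux of the lemma.
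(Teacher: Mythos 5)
Your overall strategy --- bound the singularities a priori by the root lattice $D_5$, show that the contracted locus is exactly $C$ and produces one node, then account for the remaining nodes via isolated fixed points of the covering involution --- is coherent in outline, but as written it is a plan rather than a proof, and the two steps you yourself flag as delicate are precisely where it is incomplete. The enumeration of the three isolated fixed points of $\sigma$ is never carried out: you state that you would ``follow the degenerate members of $|F_1+F_2|$'' and ``show there are exactly three,'' but no such count is given, and you concede that this is the crux of the lemma. Worse, the framework itself collapses in characteristic $2$, which the lemma must cover (the paper keeps $\cha(\bbK)=2$ in play throughout this section): there the degree-two cover is wildly ramified, an isolated fixed point of the involution no longer yields an $A_1$ quotient singularity via the local model $(x,y)\mapsto(-x,-y)$, and the hyperelliptic/Weierstrass-point picture on the genus-$2$ members of $|F_1+F_2|$ is unavailable. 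You acknowledge this obstruction but offer no replacement argument. The identification of $q_0=\phi(C)$ as a node is likewise only sketched (the reconciliation of the $\tfrac14(1,1)$ point with the degree-$2$ structure is exactly the content of the factorization $\phi=\sigma\circ\phi'$ used later in the proof of Theorem \ref{autocoble}, but you do not supply it).

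The paper avoids all of this by a global, characteristic-free argument that your proposal does not touch: it identifies the four lines $\ell_1=\phi(F_1)$, $\ell_2=\phi(F_2)$, $\ell_3=\phi(E_1)$, $\ell_4=\phi(E_2)$ on $S$, deduces from the exact sequences \eqref{three} and the intersection numbers $F_i\cdot E_j$, $C\cdot E_j$ that these lines are distinct and form a quadrangle (only the four ``adjacent'' pairwise intersections are nonempty, with $\ell_3\cap\ell_4$ equal to the point $\phi(C)$), and then invokes the description of a quartic Del Pezzo surface as the base locus of a pencil of quadrics. The hyperplane spanned by the quadrangle meets the pencil in two quadrics of rank $2$, so the pencil is spanned by two quadrics of rank $3$; by the classification this forces the singular locus to be $4A_1$ or $2A_1+A_3$, and the latter surface carries no quadrangle of lines. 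This produces the four nodes at the vertices of the quadrangle with no ramification or fixed-point analysis whatsoever. To salvage your route you would need an actual computation of the fixed scheme of $\sigma$ --- for instance a Lefschetz-type count in characteristic $0$ and a conductor or Euler-characteristic computation in characteristic $2$ --- which is substantially harder than, and in any case not contained in, what you have written.
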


\begin{proof} Consider the four curves $F_1, F_2, E_1, E_2$, where  $E_i$ is the $(-1)$-curve corresponding to the points $p_i$, $i=1,2$, in the Coble set defining $X$. By definition, $F_i\cdot E_i=0,$ $i=1,2$, and $F_i\cdot E_j = 1, i \ne j$.
Moreover, $2E_i+C\in |2F_i|$, where $C\in |-2K_X|$,  $C\cdot F_i = 0$, and $C\cdot E_j = 2, i \ne j$. 

The restriction of $|D|$ to each of the curves $F_i$ is of degree $2$, and cuts out a complete linear system of degree $2$
on these curves of arithmetic genus $1$; thus $\phi_{\vert F_i}$ is a $2$-to-$1$ cover of $F_i$ onto a line $\ell_i$, $i=1,2$. 

The restriction of $|D|$ to each of the curves $E_i$ is also of degree $2$. Since $F_i\cdot C=0$ and $C^2=-4$, 
the morphism $\phi$ contracts $C$ onto a singular point $q$ of $S$. 
This point is contained in $\phi(E_1)\cap \phi(E_2)$, and is a ramification point for the maps $\phi_{\vert E_i}\colon E_i\to \phi(E_i)$. Thus, the images
of $E_1$ and $E_2$ are two lines $\ell_3$ and $\ell_4$, that intersect at the singular point $q$.

We infer from the exact sequences \eqref{three} that 
\[
3 = h^0(D-F_1) > h^0(D-F_1-F_2) = 2,
\]
hence $\ell_1\ne \ell_2$. Similarly, we prove that 
\begin{itemize}
\item the four lines $\ell_i$ are distinct;
\item the intersections $\ell_1\cap \ell_2, \ell_2\cap \ell_3, \ell_3\cap \ell_4, \ell_4\cap \ell_1$ are non-empty;
\item all other intersections of two of these lines are empty.
\end{itemize}
 
Let $\Pi_1$ be the plane spanned by $\ell_1$ and $\ell_2$ and $\Pi_2$ be the plane spanned by $\ell_3$ and $\ell_4$; the plane
$\Pi_2$ contains the singularity $q= \ell_3\cap \ell_4=\phi(C)$. Since $\Pi_1$ and $\Pi_2$ intersect in at least two points and do not
coincide (for $\ell_1\cap \ell_3=\emptyset$)  they span a hyperplane $H$ in $\bbP^4$. The pre-image of $H\cap S$ in $X$ is the divisor $F_1+F_2+C+E_1+C+E_2\in |2F_1+2F_2|$. 

It is known that a quartic Del Pezzo surface is equal to the base locus of a pencil of quadrics. The quadrangle of lines is equal to the base locus of the restriction of the pencil to $H$. It is easy to see that this pencil must be spanned by two  quadrics of rank 2, i.e. the union of two planes. It follows from this  that the pencil of quadrics containing $S$ is spanned by two quadrics of rank 3. This implies that $S$ contains  4 singular points of type $A_1$ (ordinary nodes) or 2 singular points of type $A_1$ and one singular point of type $A_3$ (see \cite{Dolgachev-Cossec:Book}, Lemma 0.4.2). In the second case the surface does not contain a quadrangle of lines. Thus we obtain that $S$ is a 4-nodal quartic Del Pezzo surface. Its four nodes are the vertices of the quadrangle of lines. It is known that a 4-nodal quartic Del Pezzo surface is isomorphic to the anti-canonical model of the blow-up of 5 points $p_1,\ldots,p_5$ such that $p_3$ is infinitely near  $p_2$, $p_5$ is i
 nfinitely near $p_4$ and the  points $p_1,p_2,p_3$ and $p_1,p_4,p_5$ are collinear (see \cite{Dolgachev-Cossec:Book}, Proposition 0.4.3). The quadrangle of lines is formed by the images of the classes $e_1, e_3, e_5,$ and  $e_0-e_2-e_4$. 
\end{proof}

\noindent
\begin{proof}[Proof of Theorem \ref{autocoble}] Let $\sigma:S'\to S$ be the blow-up of the point $q=\phi(C)$. The exceptional curve is a $(-2)$-curve $R$ on $S'$. The morphism $\phi$ factors through a finite map $\phi':X\to S'$ of degree $2$. The pre-image of $R$ on $X$ is the curve $C$.   

Let $\tau$ be the automorphism of $X$ defined by the deck transformation of the finite double  cover $\phi':X\to S'$. Note that 
it is defined even when $\cha(\bbK) = 2$. Since the genus 1 fibration $X$ is not a  quasi-elliptic fibration (the image of its general 
member is a conic, but an elliptic curve cannot be  mapped to a rational curve by an inseparable map), the map $\phi'$ is a separable finite morphism. 

The Picard group $\Pic(S')\otimes \bbQ$ of $\bbQ$-divisor classes on $S'$ is of rank $3$. It is generated by the classes of the curve $R$ and the proper transforms of the lines $\ell_1$ and $\ell_2$. For any divisor class $A$ on $X$, we have, in $\Pic(X)\otimes \bbQ$,
\[
A+\tau(A) =  a_1f_1+a_2f_2+a_3K_X.
\]
Suppose  $A\in (\bbZ f_1+\bbZ f_2+\bbZ K_X)^\perp$. Since $f_1,f_2$ and $K_X$ are $\tau$-invariant, we obtain, by intersecting both sides with $f_1,f_2,$ and $K_X$, that $a_1 = a_2 = a_3 = 0$. Thus $\tau(A) = -A$. The sublattice of $K_X^\perp$ spanned by $f_1$ and $f_2$ is isomorphic to the hyperbolic plane $\bbH$. Its orthogonal complement is isomorphic to the lattice $\bbE_8$. Thus we obtain
\beq\label{act}
\tau^* = \id_\bbH\oplus -\id_{\bbE_8}.
\eeq

Now let $w$ be an element of $W_X$, and $f'_i=w(f_i)$, $i=1,2$. 
Since $X$ has no $(-2)$-curves, we can apply Lemma \ref{-2} to obtain that  $f_i'$  are primitive, isotropic
and effective divisor class; since $w$ preserves both $K_X$ and the intersection form, $f'_i\cdot K_X=0$
and $f'_1\cdot f'_2=1$.
Similarly, each class $w([E_i])$, $i=1,2$, is represented by a unique $(-1)$-curve $E'_i$. The curve $E'_i$ does not
intersect $f'_i$ and if one contracts $E'_i$ and apply Proposition \ref{CobletoHalphen}, one sees that $f'_i$
is the pull back of a Halphen pencil of index $2$. 
As above we deduce that  each linear system $|2f_i'|$ is an elliptic pencil, and repeating the argument by taking $(f_1',f_2')$ instead of $(f_1,f_2)$, we obtain an automorphism $\tau_\omega = w\circ \tau \circ w^{-1}$. This shows that $\Aut(X)^*$ contains the minimal normal subgroup containing the involution $ \id_\bbH\oplus -\id_{\bbE_8}$. This finishes the proof of Theorem \ref{autocoble} because this
normal subgroup is isomorphic to $W_{10}(2)$ (this non-trivial result is due to A. Coble, a modern proof can be found in  \cite{Dolgachev-Cossec:Book},  Theorem 2.10.1). 
\end{proof}

\begin{remark} One can show that, for a general Coble surface in moduli sense, the group of automorphisms is isomorphic to $W_{10}(2)$.
\end{remark}

\begin{remark} Assume $\cha(\bbK) \ne 2$. 
Let $p:X'\to X$ be the double cover of $X$ branched along the curve $C\in |-2K_X|$. 
This is a K3-surface which admits an involution whose fixed-point locus consists of  a $(-2)$-curve. Conversely, the quotient of a K3-surface by such an involution is a Coble surface.
The Picard lattice of $X'$ contains the pre-image of the Picard lattice of $X$, it is a primitive sublattice isomorphic to the lattice $\la -2\ra \perp \bbE_{10}(2)$, where $\la -2\ra$ is given by the matrix $(-2)$ and  $\bbE_{10}(2)$ is obtained from the lattice $\bbE_{10}$ by multiplying the values of the quadratic form by $2$. 

Over $\bbC$, it follows from the theory of periods of complex K3 surfaces that the coarse moduli space of such K3 
surfaces exists and is an irreducible variety of dimension $9$. 
In fact, it is one of the two codimension 1 irreducible components 
in the boundary of a compactification of the moduli space of Enriques surfaces. 
A Coble surface can be obtained as a degeneration of an Enriques surface when its K3-cover acquires an ordinary double point.  The theory of periods also provides another  proof of Theorem \ref{autocoble} (see \cite{Nikulin}, Theorem 10.1.2), and shows that the automorphism group of an unnodal Enriques surface contains a subgroup isomorphic to $W_{10}(2)$ and a  general Enriques surface has its automorphism group isomorphic to $W_{10}(2)$. 

Our proof extends to the case of unnodal Enriques surfaces $X$ over a field $\bbK$ of arbitrary characteristic. In this case any embedding of 
$\bbH$ in $\Num(X) \cong \bbH\perp \bbE_8$ defines a separable map of degree 2 on a 4-nodal quartic surface 
(it is separable for the same reason as above: the surface has no quasi-elliptic fibrations). The deck 
transformation acts by formula \eqref{act}, and we finish as in the case of Coble surfaces by invoking Theorem 2.10.1 from \cite{Dolgachev-Cossec:Book}.
\end{remark}


\section{Gizatullin's Theorem and Cremona special point sets of nine points}


Let $X$ denote, as above, a rational surface over $\bbK$;
we denote by $n+1$ its Picard number. In this section, we prove that
Cremona special point sets with $n=9$ points are Halphen sets.

\subsection{Parabolic automorphisms}
Let $g$ be an automorphism of $X$ and $g^*$ the linear transformation of $\Pic(X)$ 
induced by $g$. Since the intersection form on $\Pic(X)$ has signature $(1,n)$ and
the nef cone is $\Aut(X)$-invariant,  there are three possibilities for the isometry $g^*$ (see \cite{Cantat:Acta}, \cite{Gizatullin:1980}). 
\begin{itemize}
\item $g^*$ preserves an ample class $h$, i.e. $g^*h=h$; in this case, a positive iterate
$g^k$ of $g$ acts trivially on $\Pic(X)$ and there is an embedding of $X$ into a projective
space $\bbP^N_\bbK$ such that $g^k$ extends to a projective linear transformation of $\bbP^N_\bbK$.
\item $g^*$ preserves a primitive nef class $h$ with $h^2=0$ but does not preserve any ample class. 
\item $g^*$ does not preserve any nef class and the spectral radius (i.e. the largest possible absolute value of an eigenvalue) of the linear transformation 
$g^*$ on $\Pic(X)\otimes \bbR$ is larger than $1$. 
\end{itemize}

In the first case, one says that $g$ (or $g^*$) is {\bf{elliptic}}, in the second that $g$ is {\bf{parabolic}}, and 
in the third that $g$ is {\bf{hyperbolic}} (or loxodromic). 

When $g$ is parabolic, the class $h$ is, up to a scalar multiple, the unique $g^*$-invariant class in the isotropic
cone of the intersection form. In particular, the assumption that $h$ is nef 
could be removed from the definition (see \cite{Gizatullin:1980}, Proposition 4, or \cite{Cantat:Acta}). 
The following fundamental theorem of M. Gizatullin  describes geometric properties of parabolic automorphisms. 

\begin{theorem}[M. Gizatullin]\label{Giz1} 
Let  $X$ be a rational surface  over an algebraically closed field $\bbK$.
Let $G\neq \{\id_X\}$ be a  group of automorphisms of $X$ such that all elements $g$ in $G\setminus\{\id_X\}$ 
are parabolic. 

Then there exists a unique $G$-invariant primitive nef class $h$ in $\Pic(X)$ with $h^2=h\cdot K_X = 0$ and there exists a unique positive integer $m$ 
such that the linear system $|m h|$ is an irreducible pencil of  curves of arithmetic genus~$1$.  
The relative minimal model of this genus $1$ fibration is a Halphen surface $Y$ of index~$m$. 
\end{theorem}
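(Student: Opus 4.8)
The plan is to build the class $h$ by a dynamical argument on the hyperbolic space attached to $\Pic(X)\otimes\bbR$, to force $h\cdot K_X=0$ by a short lemma on fixed vectors of parabolic isometries, and then to realize a multiple of $h$ as a genus one pencil and read off the Halphen structure from Proposition \ref{pro:halphen}. First I would fix any $g\in G\setminus\{\id_X\}$; it is parabolic, so by the trichotomy recalled above it preserves a primitive nef class $h$ with $h^2=0$, unique up to scaling among isotropic $g^*$-invariant classes. I claim $h$ is $G$-invariant. If some $g'\in G$ did not preserve the ray $\bbR h$, then $g$ and its conjugate $g'g(g')^{-1}$ would be two parabolic elements of $G$ fixing distinct boundary points $[h]\ne[g'(h)]$; a standard argument in the geometry of $\Or(\Pic(X)\otimes\bbR)$ (ping-pong between disjoint horoball neighbourhoods of the two fixed points, cf.\ the references cited for the classification of isometries) then shows that a product of high powers of these two parabolics is hyperbolic, contradicting the hypothesis that every nontrivial element of $G$ is parabolic. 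Hence every $g'\in G$ fixes $\bbR h$, and since $g'$ preserves the nef cone and $h$ is a primitive nef vector we get $g'(h)=h$.

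Next I would check $h\cdot K_X=0$ and uniqueness. A parabolic isometry fixes no class of positive self-intersection, since such a class is an ample-type fixed vector and would make the isometry elliptic. Consequently its fixed subspace contains no hyperbolic plane: if a fixed class $u$ had $u\cdot h\ne0$, then $\langle h,u\rangle$ would be a two-dimensional space of fixed vectors containing an isotropic vector with $u\cdot h\ne 0$, hence a hyperbolic plane, and such a plane contains fixed vectors of positive self-intersection. Applying this to the fixed class $K_X$ (recall $g^*K_X=K_X$) gives $h\cdot K_X=0$, whether or not $K_X\in\bbR h$. Uniqueness of $h$ is then immediate, for a second $G$-invariant primitive nef isotropic class is in particular $g^*$-invariant and isotropic, hence proportional, hence equal to $h$.

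The analytic heart of the proof, and the step I expect to be the main obstacle, is to show that a positive multiple of $h$ moves in a genus one pencil. Since $X$ is rational, $h^1(\calO_X)=0$, so Riemann--Roch together with $h^2=h\cdot K_X=0$ gives $\chi(\calO_X(mh))=1$ and $p_a(mh)=1$ for every $m$. I would argue that the Iitaka dimension $\kappa(X,h)$ equals $1$: it is at most $1$ because $h^2=0$, and the exclusion of $\kappa(X,h)=0$ is the crux. Here one must show that the invariant class of a parabolic automorphism of a projective surface is never rigid (equivalently, that $h$ is semiample): otherwise a multiple of $h$ would be represented by a rigid effective divisor fixed by the infinite-order automorphism $g$, which is incompatible with the nontrivial unipotent part of $g^*$ along $h$. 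This requires a Riemann--Roch and Zariski-decomposition analysis exploiting that $g^*$ has infinite order while fixing $h$. Granting $\kappa(X,h)=1$, let $m>0$ be minimal with $\dim|mh|=1$; using $h^2=0$, $p_a(mh)=1$ and the primitivity of $h$ one checks that $|mh|$ has no fixed part, is base point free, and has irreducible general member, so that it defines a genus one fibration $f\colon X\to\bbP^1$ with fibre class $mh$. This $m$ is the unique integer for which $|mh|$ is a pencil, since higher multiples pull back $\calO_{\bbP^1}(k)$ and give larger systems while smaller ones give none.

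Finally I would contract the $(-1)$-curves contained in the fibres of $f$ to obtain the relatively minimal model $\pi\colon X\to Y$, with induced genus one fibration $f'\colon Y\to\bbP^1$. By Proposition \ref{pro:halphen}, $Y$ is a Halphen surface, and its index is the least $m'$ with $\dim|-m'K_Y|=1$. Since the general fibre of $f$ avoids the centres of $\pi$, the fibre class on $X$ equals $\pi^*$ of the fibre class on $Y$, i.e.\ $mh=\pi^*(-m'K_Y)$; because $-K_Y$ is primitive and $\pi^*$ preserves both primitivity and spaces of sections, this gives $h=\pi^*(-K_Y)$ and $m=m'$. Hence $Y$ is a Halphen surface of index exactly $m$, which completes the proof.
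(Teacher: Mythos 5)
First, a point of comparison: the paper does not prove this theorem at all. It is quoted from Gizatullin's article \cite{Gizatullin:1980}, with only a remark explaining that the hypothesis $\cha(\bbK)\ne 2,3$ made there can be dropped. So there is no in-paper argument to measure yours against; I can only assess the proposal on its own terms.

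The lattice-theoretic half of your argument is sound. The ping-pong forcing all of $G$ to fix the ray $\bbR h$ (using that $\Or(\Pic(X))$ is discrete, so products of high powers of two parabolics with distinct boundary fixed points are hyperbolic) is correct; so are the observation that the fixed subspace of a parabolic isometry contains no vector of positive square, whence $h\cdot K_X=0$, and the uniqueness of $h$. The passage to the relatively minimal model and the identification of the index via Proposition \ref{pro:halphen} are also essentially right, though your uniqueness claim for $m$ needs the word \emph{irreducible}: for $m<n<2m$ one still has $\dim|nh|=1$, the pencil merely acquiring the fixed component $(n-m)F_0$, so it is irreducibility, not the dimension count, that singles out $m$.

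The genuine gap is exactly where you locate it, and you do not close it. Riemann--Roch gives $h^0(mh)\ge 1$ for $m$ large (one must also note that $K_X-mh$ cannot be effective, since its intersection with an ample class is eventually negative), but the entire content of the theorem is the jump from $h^0=1$ to $h^0=2$. Your proposed mechanism --- that a rigid $g$-invariant effective divisor in $|mh|$ is ``incompatible with the nontrivial unipotent part of $g^*$ along $h$'' --- is not an argument, and as stated it is not even a plausible contradiction: $g^*$ fixes the class $mh$ on the nose, $(g^*-\id)$ maps $\Pic(X)$ into $h^\perp$ and $(g^*-\id)^2$ into $\bbZ h$, and nothing in this linear-algebraic picture forbids $|mh|$ from consisting of a single $g$-invariant divisor $D$. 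Writing $D=\sum a_iC_i$, every component satisfies $C_i\cdot h=0$, so the exact sequence $0\to\calO_X((n-1)D)\to\calO_X(nD)\to\calO_D(nD)\to 0$ shows that $h^0(nD)$ jumps only when $\calO_D(nD)\cong\calO_D$; if $\calO_D(D)$ is non-torsion in $\Pic(D)$ then $h^0(nD)=1$ for every $n$ and no pencil ever appears. Proving that $\calO_D(D)$ is torsion (the analogue of condition (ii) of Proposition \ref{pro:halphen}) is precisely where the existence of the parabolic automorphism, and not merely of the invariant class, must be exploited; this is the substance of Gizatullin's proof and it is missing from yours.
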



\begin{remark}
Assume that $\bbK=\bbC$ is the field of complex numbers. Then, Gizatullin's Theorem can be strengthened: A finite index subgroup of $G$ preserves each fiber
of the genus one fibration; it acts as a group of translations along the fibers, with dense orbits in almost all fibers;  if $G$ is cyclic, periodic
orbits are dense in $X$ (see \cite{Cantat-Favre, Cantat:Groups}).
\end{remark}

\begin{remark} Gizatullin's Theorem is stated in \cite{Gizatullin:1980} under the assumption that $\cha(\bbK) \ne 2,3$. As the author points out himself, this assumption is made only to avoid quasi-elliptic fibrations, for which the arguments must be slightly modified but
the same conclusion holds. Moreover,  in our applications,  quasi-elliptic fibrations are not realized.
\end{remark}

\begin{corollary}
Let $X$ and $G$ be as in Theorem \ref{Giz1}. The group $G$ descends to a subgroup of the group $\Aut(Y)$. In particular, it 
 contains a finite index free abelian 
subgroup $G_0$, the rank of which is at most $8$ and  is equal to $8$ if and only if $Y$ is an unnodal Halphen surface.   
\end{corollary}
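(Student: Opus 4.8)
The plan is to push $G$ down to the relatively minimal model $Y$ and then exploit the fact that $W_9$ is virtually free abelian of rank $8$. First I would observe that, since $G$ fixes the class $h$, it preserves the pencil $|mh|$ and hence the genus one fibration $f\colon X\to\bbP^1$. The relatively minimal model of a genus one fibration is unique, so the birational morphism $\pi\colon X\to Y$ contracting the $(-1)$-curves contained in fibers is canonical; consequently each $g\in G$ satisfies $\pi\circ g=g_Y\circ\pi$ for a well-defined $g_Y\in\Aut(Y)$, and $g\mapsto g_Y$ is a homomorphism $G\to\Aut(Y)$. This descent is the substantive content of the corollary.

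Next I would check that the composite of this homomorphism with the action on $\Pic(Y)$ is injective. The pullback $\pi^*\colon\Pic(Y)\hookrightarrow\Pic(X)$ is $G$-equivariant, its image is a sublattice isometric to $\Pic(Y)$ of signature $(1,9)$ containing $h$, and its orthogonal complement in $\Pic(X)$ is negative definite, being spanned by the classes contracted by $\pi$. If $g_Y$ acted trivially on $\Pic(Y)$, then $g^*$ would fix $\pi^*\Pic(Y)$ pointwise and act by an isometry of a negative definite lattice, hence with finite order; but every non-identity element of $G$ is parabolic and therefore of infinite order on $\Pic(X)$. Thus $g_Y=\id$ forces $g=\id_X$, and $G$ embeds into $\Aut(Y)^*\subseteq W_Y\cong W_9$ (the last inclusion being Kantor--Nagata's theorem).

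Now I would use the extension \eqref{iota2}, which presents $W_9$ as $\iota(\bbE_8)\rtimes W_8$ with $\iota(\bbE_8)\cong\bbZ^8$ and $W_8$ finite. The image of $G$ in the finite group $W_8$ is finite, so $G_0:=G\cap\iota(\bbE_8)$ has finite index in $G$ and is free abelian of rank at most $8$; this already gives $\rank G_0\le 8$. For the characterization of equality, suppose $Y$ is not unnodal. By the discussion in Section \ref{general}, $Y$ then carries $(-2)$-curves, which are precisely the components of the reducible fibers of $f$; as $\Aut(Y)$ permutes this finite set, after replacing $G_0$ by a finite-index subgroup (which does not change $\rank G_0$) every element of $G_0$ fixes all of their classes. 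Writing $\iota(D)(A)=A-(A\cdot D)K_Y$ on $\bbE_8\cong K_Y^\perp/\bbZ K_Y$, a translation $\iota(D)$ fixes a $(-2)$-class $R$ if and only if $D\cdot R=0$; since the $(-2)$-classes span a subspace of rank $k\ge 1$, the translations fixing all of them form a subgroup of rank $8-k<8$, whence $\rank G_0\le 8-k<8$. Therefore $\rank G_0=8$ forces $Y$ to be unnodal. Conversely, when $Y$ is unnodal Theorem \ref{Halphen-CS} shows that $\Aut(Y)^*$ contains $\iota(m\bbE_8)$, a finite-index subgroup of $\iota(\bbE_8)$ of rank $8$, so the maximal value $8$ is realized exactly in the unnodal case.

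The main obstacle is the descent step: one must know that the relatively minimal model of the genus one fibration is unique, so that $\pi$ is canonical and $G$-equivariant, and that no non-trivial parabolic element becomes trivial on $\Pic(Y)$, which is where the negative-definiteness of the contracted lattice enters. Once $G$ is realized inside $W_9$, the bound $\rank G_0\le 8$ and the unnodal dichotomy are purely lattice-theoretic and rest only on \eqref{iota2}, Theorem \ref{Halphen-CS}, and the identification of $(-2)$-curves with reducible fiber components from Section \ref{general}.
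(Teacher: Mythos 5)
Your proof is correct and follows the same route as the paper, whose entire proof of this corollary is the one-line citation ``This follows from Theorems \ref{Giz1} and \ref{Halphen-CS}''; you simply supply the details that the authors leave implicit (uniqueness of the relative minimal model for the descent, injectivity of $G\to\Aut(Y)^*$ via parabolicity and the negative definiteness of the contracted sublattice, and the rank computation from the extension \eqref{iota2} together with the $(-2)$-curve argument from the proof of Theorem \ref{Halphen-CS}). Your reading of the ``if'' direction as asserting that the maximal rank $8$ is realized exactly in the unnodal case is the sensible interpretation, since for an arbitrary parabolic subgroup $G$ (e.g.\ cyclic) the rank of $G_0$ obviously need not be $8$ even when $Y$ is unnodal.
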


\begin{proof}
This follows from Theorems \ref{Giz1} and \ref{Halphen-CS}.
\end{proof}


\subsection{Cremona special implies unnodal}

\begin{proposition}\label{P3} A Cremona special rational surface $X$  does not contain $(-2)$-curves, i.e. it is unnodal. 
\end{proposition}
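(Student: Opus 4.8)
The plan is to argue by contradiction: assuming $X$ carries a $(-2)$-curve, I will produce a finite-index subgroup of $W_X$ that fixes a nonzero root, which is impossible. So suppose $R$ is a $(-2)$-curve and set $r=[R]\in\Pic(X)$. The adjunction formula gives $r\cdot K_X=0$, so $r\in K_X^\perp$, and $r^2=-2$ means that $r$ is a root. Since every element of $\Aut(X)$ maps $(-2)$-curves to $(-2)$-curves, the group $\Aut(X)^*$ permutes the set $\calR$ of classes of $(-2)$-curves. Let $W_X^{\textup{nod}}\subseteq W_X$ be the subgroup generated by the reflections $s_{r'}$, $r'\in\calR$. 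I will use the standard chamber structure of the positive cone of a surface: the nef cone $\textup{Nef}(X)$ is a fundamental domain for the action of $W_X^{\textup{nod}}$ on the positive cone, with walls supported by the hyperplanes $(r')^\perp$, $r'\in\calR$.

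Since automorphisms preserve nef (and ample) classes, $\Aut(X)^*$ stabilizes $\textup{Nef}(X)$. On the other hand, the only element of $W_X^{\textup{nod}}$ stabilizing its fundamental domain is the identity; hence $\Aut(X)^*\cap W_X^{\textup{nod}}=\{1\}$. Because $\calP$ is Cremona special, $\Aut(X)^*$ has finite index in $W_X$, and the elementary fact that a subgroup meeting a finite-index subgroup trivially must itself be finite (the coset map $W_X^{\textup{nod}}\to W_X/\Aut(X)^*$ is injective) shows that $W_X^{\textup{nod}}$ is finite. In particular $\calR$ is a finite set, so $X$ carries only finitely many $(-2)$-curves.

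Now the finiteness of $\calR$ forces a contradiction. The kernel $\Gamma$ of the permutation action of $\Aut(X)^*$ on the finite set $\calR$ has finite index in $\Aut(X)^*$, hence in $W_X$, and it fixes $r$. Thus $\textup{Stab}_{W_X}(r)$ has finite index in $W_X$, i.e. the $W_X$-orbit of $r$ is finite. But the orbit of a root under the infinite group $W_X\cong W_n$ ($n\geq 9$) is always infinite: for $n\geq 10$ the lattice $\bbE_n=K_X^\perp$ is nondegenerate and $W_n$ acts irreducibly, so a finite spanning orbit would make $W_n$ finite; for $n=9$, since $r\cdot k_9=0$ and $r\notin\bbR k_9$, formula \eqref{iota} shows that the translations $\iota(w)$ act by $\iota(w)(r)=r-(w,r)k_9$, which already produces infinitely many distinct classes as $w$ ranges over $\bbE_8$. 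This contradiction proves that $X$ has no $(-2)$-curve.

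The crux of the argument, and the only nonelementary input, is the chamber description of the positive cone, namely that $\textup{Nef}(X)$ is a strict fundamental domain for $W_X^{\textup{nod}}$; everything else is the trivial-intersection lemma and the short computation that root orbits of infinite $W_n$ are infinite. An alternative route to $\Aut(X)^*\cap W_X^{\textup{nod}}=\{1\}$ that avoids quoting the fundamental-domain statement is to note that a nontrivial element of $W_X^{\textup{nod}}$ sends some wall root $r'\in\calR$ (an effective class) to a negative, hence non-effective, root, whereas elements of $\Aut(X)^*$ preserve effectivity of $(-2)$-classes.
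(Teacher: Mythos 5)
Your argument is sound for $n\le 10$, but it has a genuine gap for $n\ge 11$, and the Proposition is needed for all $n\ge 9$. The problem is the very first structural step: you declare $W_X^{\textup{nod}}\subseteq W_X$, where $W_X^{\textup{nod}}$ is generated by the reflections $s_{r'}$ in the classes of \emph{all} $(-2)$-curves. But the reflections contained in $W_X\cong W_n$ are exactly the reflections in \emph{real} roots, i.e.\ roots in the $W_n$-orbit of the simple roots $\alpha_i$; for $n\ge 11$ the lattice $\bbE_n$ contains non-real roots, and a $(-2)$-curve may a priori represent one. If $r'$ is such a class, then $s_{r'}\notin W_X$, your coset map $W_X^{\textup{nod}}\to W_X/\Aut(X)^*$ is not defined, and the finiteness of $W_X^{\textup{nod}}$ (hence of $\calR$, hence of the $W_X$-orbit of $r$) does not follow: $\Aut(X)^*$ is only known to have finite index in $W_X$, which itself has infinite index in $\Or(K_X^\perp)$ once $n\ge 11$. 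The paper's proof is organized precisely around this point: it runs the reflection-subgroup argument only for \emph{effective real roots} (where Deodhar's theorem applies and the subgroup genuinely sits inside $W_X$), concludes that there are none, and then needs a second, separate argument --- a descent on the coefficient $a_0$ of the class of a putative $(-2)$-curve, using Noether's inequality and the quadratic Cremona transformations realizing $s_{\alpha_0}$ --- to exclude $(-2)$-curves whose classes are non-real roots. That second half is entirely missing from your proof.

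Two smaller points. First, your claim that $\textup{Nef}(X)$ is a fundamental domain for $W_X^{\textup{nod}}$ on the positive cone is not literally true on a general rational surface: the nef cone is also cut out by $(-1)$-curves and other negative curves, so it is usually a proper subcone of the fundamental chamber. What is true, and suffices for $\Aut(X)^*\cap W_X^{\textup{nod}}=\{1\}$, is that the ample cone lies in the interior of the fundamental chamber and nontrivial elements move that chamber off itself; this is the content of the reference (Dolgachev 1986, Proposition 3) that the paper cites, and your ``alternative route'' via effectivity of positive roots is essentially its proof, though it too requires knowing that every root of $W_X^{\textup{nod}}$ is, up to sign, effective. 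Second, where your argument does apply (real roots), your endgame differs from the paper's: you deduce a contradiction from the finiteness of the $W_X$-orbit of a single root (via irreducibility of the reflection representation for $n\ge 10$, and the explicit translation formula for $n=9$), whereas the paper shows that the span of the finitely many effective real roots is a proper invariant subspace and kills it using the Zariski density theorem of Benoist--de la Harpe. Both routes work for that step, and yours is arguably more elementary, but it does not repair the $n\ge 11$ gap.
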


\begin{proof} Denote by $\calP=\{p_1, \ldots, p_n\}$ a point set such that $X$ is the blow-up of 
$\bbP^2$ at $\calP$; this provides a natural morphism $\pi:X\to \bbP^2$, and a geometric basis   $(e_0,\ldots,e_n)$ of $\Pic(X)$.  
Let $\calR_X$ be the set of  effective real roots (see \S \ref{par:roots}). Our first goal
is to prove that $\calR_X$ is  empty.

Let $W_X^{{\sf{nod}}}$ be the subgroup of $W_X$ generated by the reflections $s_\alpha$ with $\alpha\in \calR_X$. Since all reflections 
generating $W_X^{{\sf{nod}}}$ are conjugate under $W_X$ to reflections defined by the simple roots $\alpha_i$,  a  result of Deodhar \cite{Deodhar} shows that 
$W_X^{{\sf{nod}}}$ is a Coxeter subgroup of the Coxeter group $W_X$. Its intersection with $\Aut(X)^*$  is equal to $ \{1\}$ (\cite{Dolgachev:1986}, Proposition 3,\footnote{The assertion is stated for the subgroup generated by the reflections with respect to all $(-2)$-curves, but the proof is extended verbatim to our situation.}). So, if  $\Aut(X)^*$ is of finite index in $W_X$,  the group  $W_X^{{\sf{nod}}}$ must be finite, and $W_X^{{\sf{nod}}}$ is a finite Coxeter group. It follows from the classification of such groups that $\calR_X$ is a finite set and the sublattice $\calN_X$ generated by this set is a negative definite sublattice of $K_X^\perp$; hence $\calN_X\otimes \bbR$   is a
proper  subspace of $K_X^\perp\otimes \bbR$.

The group  $\Aut(X)^*$  leaves the sublattice $\calN_X$  invariant, and acts as a finite group on it. Identifying $K_X^\perp\otimes \bbR$ with $\bbE_n\otimes \bbR$, $\Aut(X)^*$ determines a finite index subgroup of $W_n$ that preserves the proper subspace $\calN_X\otimes \bbR$.

Assume $n\geq 10$. We invoke a theorem of Yves Benoist and Pierre de la Harpe from \cite{Benoist} according to which 
the image of $W_n$ in $\Or(\bbE_n\otimes \bbR)$ is Zariski dense. Since, by assumption,  $\Aut(X)^*$ is a subgroup of finite index in $W_n$, we obtain that the image of $\Aut(X)^*$ is Zariski dense either in $\Or(\bbE_n\otimes \bbR)$ or in its connected component of the identity $\SO(\bbE_n\otimes \bbR)$.\footnote{$\SO(\bbE_n\otimes\bbR)$ is the connected component containing the identity for the Zariski topology, but has two connected components as a real Lie group.} Thus the representation of $\Aut(X^*)$ in $\bbE_n\otimes \bbR$ is irreducible, and hence $\calN_X = \{0\}$.

When $n=9$, the theorem proved by Benoist and de la Harpe asserts that the image of the Coxeter group $W_9$ is Zariski dense in the subgroup $G_9$ of $\Or(\bbE_9\otimes \bbR)$ defined by
$
G_9= \{g\in \Or(\bbE_9\otimes \bbR)\; \vert \; g(k_9)=k_9\}.
$ 
If $N$ is a proper subspace of $\bbE_9\otimes \bbR$ which is fixed by a finite index subgroup of $G_9$, then $N/\bbR k_9$ is trivial.  As a consequence, $\calN_X = \{0\}$ in this case too.

 Now we may assume that $\calR_X$ is empty, in particular, no $\alpha_i$ is an effective divisor class.  Suppose we have a $(-2)$-curve  $R$; it represents an effective non-real root.  Write  $r = [R] = a_0e_0-\sum a_ie_i\in \calR_X$. After permuting the elements of the basis, we may assume that $a_1\ge \ldots \ge a_n$. Intersecting with $e_i$, we obtain that $a_i \ge 0$. Applying Noether's inequality \cite{Dolgachev-Ortland:Ast}, Chapter V, Proposition 5, we get $a_0 < a_1+a_2+a_3$. Since $\alpha_0 = e_0-e_1-e_2-e_3$ is not effective, the three points $p_1,p_2,p_3$ are not collinear. Let $X_{123}$ be the surface obtained by blowing up $\bbP^2$ at
$p_1$, $p_2$, and $p_3$.  
Let $\sigma$ be the standard quadratic involution with fundamental points $p_1,p_2,p_3$. This birational transformation of
the plane lifts to an automorphism of $X_{123}$; thus, the composition of $\pi$ with $\sigma$
is a new birational morphism from $X$ to $\bbP^2$, which provides a new geometric basis for $\Pic(X)$. 
In terms of the first geometric basis $(e_i)$ of $\Pic(X)$, the change of basis corresponds to the linear transformation $s_{\alpha_0}$. As a consequence, if $r$ is the class of a $(-2)$-curve then the class $r'$ defined by
$$r' = s_{\alpha_0}(r) = a_0'e_0-\sum_{i=1}^na_i'e_i,$$
is also the class of a $(-2)$-curve (in the new geometric basis). Moreover 
$$a_0' = 2a_0-a_1-a_2-a_3, a_1' = a_0-a_2-a_3, a_2' = a_0-a_1-a_3, a_3'= a_0-a_1-a_2, $$
and $a_i' = a_i, i > 3.$
Now, by Noether's inequality, we have $a_0' < a_0$. Since $s_{\alpha_0}$ is in $W_X$, we can iterate this process, keeping the assumption that $\calR_X$ 
is empty. We can therefore decrease $a_0$ until it becomes 0. In this case one of the simple roots $\alpha_i$ becomes effective, a contradiction. 
\end{proof}

\subsection{Special point sets of nine points}
\begin{theorem}\label{halthm} Let $\calP$ be a Cremona special point set with $9$ points. Then $\calP$ is an unnodal Halphen set of some index $m$. 
\end{theorem}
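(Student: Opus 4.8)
The plan is to start from the structure we have already established: by Proposition \ref{P3}, a Cremona special surface $X$ contains no $(-2)$-curves, so $X$ is unnodal. The remaining task is to produce a Halphen fibration on $X$, i.e. to find a primitive nef isotropic class $h$ in $K_X^\perp$ with $h^2 = h\cdot K_X = 0$ such that $|mh|$ is a genus $1$ pencil for some $m$, and whose relative minimal model recovers $X$ itself. Since $n=9$, the group $W_9$ is the affine Weyl group of type $E_8$, which by \eqref{iota2} fits in an extension $0\to \bbE_8\to W_9\to W_8\to 1$ with $W_8$ finite. The Cremona special hypothesis says $\Aut(X)^*$ has finite index in $W_9$; intersecting with the finite-index normal free abelian subgroup $\iota(\bbE_8)$, we obtain that $\Aut(X)^*\cap \iota(\bbE_8)$ is a finite-index subgroup of $\iota(\bbE_8)$, hence a free abelian group of rank $8$.

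\textbf{The key point} is that every nontrivial element of $\iota(\bbE_8)$ is parabolic. Indeed, the formula $\iota(D)(A) = A - (A\cdot D)K_X$ (for $A\in K_X^\perp$) shows that each such isometry fixes the class $K_X$, acts unipotently on $K_X^\perp$, and fixes no ample class when $D\neq 0$; its only invariant isotropic ray is $\bbR K_X$. Thus the subgroup $G := \Aut(X)^* \cap \iota(\bbE_8)$ lifts to a subgroup of $\Aut(X)$, all of whose nontrivial elements act parabolically on $\Pic(X)$. First I would apply Gizatullin's Theorem \ref{Giz1} to this group $G$: it yields a unique $G$-invariant primitive nef class $h$ with $h^2 = h\cdot K_X = 0$ and an integer $m$ such that $|mh|$ is a genus $1$ pencil whose relative minimal model is a Halphen surface $Y$ of index $m$.

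\textbf{The remaining step} is to identify $h$ with $-K_X$ (up to sign and scaling) and to conclude that $X$ itself \emph{is} the Halphen surface, not merely birational to one. Since every element of $\iota(\bbE_8)$ fixes $K_X$, and $K_X$ is the unique (up to scalar) isotropic class fixed by a nontrivial parabolic element of this group, the invariant class $h$ produced by Gizatullin must be proportional to $K_X$; as $h$ is primitive and nef with $h^2=0$, and $-K_X$ is primitive, we get $h = -K_X$. Then $|{-m}K_X|$ is a genus $1$ pencil with no fixed component. To see that it has no base points — so that $X$ is already a Halphen surface and no contraction to a relatively minimal model is needed — I would use that $X$ is unnodal: by Lemma \ref{-ncurves} and the absence of $(-2)$-curves, there is no $(-1)$-curve contained in a fiber of the fibration defined by $h$ (such a curve would force a reducible fiber, producing a $(-2)$-curve after relative minimalization, contradicting unnodality). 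Hence the genus $1$ fibration is already relatively minimal on $X$, $X = Y$, and $X$ is a Halphen surface of index $m$. By Proposition \ref{pro:halphen}, $\calP$ is then a Halphen set of index $m$, and it is unnodal by Proposition \ref{P3}.

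\textbf{The main obstacle} I anticipate is the last identification: ensuring that the Halphen surface $Y$ delivered by Gizatullin's theorem coincides with $X$ rather than being a proper blow-down of it. A priori Gizatullin only guarantees that the \emph{relative minimal model} of the fibration is Halphen, so I must rule out that passing to $Y$ contracts some $(-1)$-curve sitting in a fiber. The unnodality of $X$ is exactly what excludes this: any $(-1)$-curve in a fiber would, together with the other fiber components, force the existence of a $(-2)$-curve. Making this argument precise — translating ``relatively minimal'' back to ``$X$ has no fiber-contractible curve'' under the no-$(-2)$-curve hypothesis, and checking this is compatible with $h=-K_X$ being already base-point free — is where the care is needed, but no heavy computation should be required.
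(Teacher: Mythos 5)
Your proposal is correct and follows essentially the same route as the paper: extract a finite-index rank-$8$ free abelian subgroup of $\Aut(X)^*$ inside $\iota(\bbE_8)$, observe its nontrivial elements are parabolic with invariant isotropic ray $\bbR K_X$, apply Gizatullin's Theorem to get the irreducible pencil $|-mK_X|$, and invoke Proposition \ref{P3} for unnodality. You merely make explicit two points the paper leaves implicit (that $h=-K_X$ and that $X$ coincides with the relative minimal model -- the latter being automatic since every component $C_i$ of a fiber of $|-mK_X|$ satisfies $C_i\cdot K_X=0$, so no $(-1)$-curve can lie in a fiber).
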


\begin{proof}  Let $X$ be the surface obtained by blowing up $\calP$.
Since $\Aut(X)^*$ has finite index in $W_X$, this group contains also a free abelian group of
rank $8$ equal to a subgroup of finite index in $\iota(\bbE_8)$ (see \S \ref{autoha}). This group preserves the isotropic vector $K_X$. By Gizatullin's theorem   there exists a (minimal) positive integer $m$ such that the linear system $\vert -mK_X\vert$ is an irreducible elliptic pencil. Thus $X$ is a Halphen surface of index $m$. By Proposition \ref{P3}, $X$ has no $(-2)$-curves. Thus  $X$ is an unnodal Halphen surface.
\end{proof}

\section{The general case}\label{par:GC}

The main results of this section and the next one are summarized in the following
two theorems. 

\begin{theorem}\label{thm:Coble-if-not-cubic}
Let $\calP$ be a point set of $n\geq 10$  points which is not contained in a cubic curve. 
If $\calP$ is Cremona special, 
then $\calP$ is an unnodal Coble set. In particular, $n$ is equal to $10$.
\end{theorem}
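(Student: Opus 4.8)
The plan is to reduce to the Coble picture of Section~\ref{Coble}: I want to manufacture two primitive isotropic ``half-fiber'' classes $f_1,f_2\in K_X^\perp$ with $f_1\cdot f_2=1$ and $|f_i|=\{F_i\}$ a single irreducible curve of arithmetic genus~$1$, and then feed them into the degree-$4$ Del Pezzo construction of Lemmas~\ref{lem:System-D}--\ref{base} in the characteristic-free, $(-2K_X)$-independent form announced in Remark~\ref{rem:System-D} (Lemma~\ref{lem:D}). The output of that construction is a $2$-to-$1$ morphism $\phi\colon X\to S$ onto a $4$-nodal quartic Del Pezzo surface, whose ramification curve $C=\phi^{-1}(\text{node})$ should automatically lie in $|-2K_X|$ and satisfy $C^2=-4$. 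Producing such a $C$ is exactly what I need: it shows $-2K_X$ is effective, and $4K_X^2=C^2=-4$ forces $K_X^2=-1$, hence $n=10$. Two inputs come for free and serve as standing hypotheses: by Proposition~\ref{P3} the surface $X$ is unnodal, and since $\calP$ is not contained in a cubic the system $|-K_X|$ is empty.

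Next I would manufacture the half-fibers. Since $\Aut(X)^*$ has finite index in the infinite Coxeter group $W_n$, the stabilizer of a primitive isotropic vector $h\in K_X^\perp$ meets $\Aut(X)^*$ in a finite-index subgroup; its translation part is a torsion-free abelian group all of whose nonzero elements are parabolic, which (using that $\Aut(X)^0$ is finite) lifts to a group $G\subset\Aut(X)$ of parabolic automorphisms. Gizatullin's Theorem~\ref{Giz1} then yields a $G$-invariant primitive nef isotropic class, a genus-$1$ fibration $|mh|$, and a relative minimal model that is a Halphen surface $Y$ of index $m$. Because $n\geq10>9$, $X$ cannot itself be a Halphen surface, so $X\to Y$ is a nontrivial blow-down; unnodality of $X$ together with $|-K_X|=\emptyset$ should force $m\geq2$ and $|h|=\{F_1\}$ with $F_1$ irreducible, so I set $f_1=h$. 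A companion class $f_2$ with $f_2^2=0$, $f_2\cdot K_X=0$ and $f_1\cdot f_2=1$ is then produced inside the $W_X$-orbit of $f_1$ by applying a suitable $g\in\Aut(X)^*$, invoking Lemma~\ref{-2} to keep $f_2$ primitive, isotropic and effective with $|f_2|$ a single irreducible curve.

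With $f_1,f_2$ in hand, the system $|2f_1+2f_2|$ is base-point free and defines $\phi\colon X\to\bbP^4$ (Lemma~\ref{lem:D}); restricting to a general genus-$2$ member of $|f_1+f_2|$ should exhibit $\phi$ as the bicanonical map there, making $\phi$ a $2$-to-$1$ cover of a quartic which, as in Lemma~\ref{base}, is a $4$-nodal Del Pezzo surface $S$. The deck involution $\tau$ is then a genuine automorphism of $X$ acting as $\id_{\bbH}\oplus(-\id)$ on $K_X^\perp=\bbH\perp\bbH^\perp$ with $\bbH=\la f_1,f_2\ra$, and its ramification curve $C\in|-2K_X|$ with $C^2=-4$ closes the argument as above. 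Finally Proposition~\ref{CobletoHalphen} and Theorem~\ref{discr} confirm that the resulting Coble surface is unnodal, so $\calP$ is an unnodal Coble set with $n=10$.

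The hard part will be the middle two steps: producing honest half-fibers and the deck involution. Concretely I must exclude $m=1$ (a movable pencil $|h|$), arrange $f_1\cdot f_2=1$ exactly, and rule out the competing possibility that $\phi$ is birational onto a degree-$8$ surface; equivalently I must show that the orthogonal complement $\bbH^\perp$ of the hyperbolic plane is exactly $\bbE_8$. This is the only place where the two hypotheses---$\calP$ Cremona special (so $\Aut(X)^*$ has finite index in $W_n$) and $\calP$ not on a cubic---must be combined to force $n=10$, and it is precisely where the self-contained, characteristic-free input of Section~\ref{par:System-D-II} is indispensable rather than a mere quotation from Section~\ref{Coble}.
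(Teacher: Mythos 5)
Your overall target is the right one---manufacture two half-fiber classes $f_1,f_2$ with $f_1\cdot f_2=1$, run the $|2f_1+2f_2|$ construction, and extract an effective member of $|-2K_X|$---but the proposal leaves unproved exactly the steps where the paper has to work hardest, and two of your proposed shortcuts are circular or incomplete. First, the claim that restricting $|D|$ to a general genus-$2$ member $P$ of $|F_1+F_2|$ ``should exhibit $\phi$ as the bicanonical map'' is circular: in Lemma \ref{lem:System-D2} that identification rests on $\omega_P^{\otimes 2}\cong\calO_P(2F_1+2F_2+2K_X)\cong\calO_P(2F_1+2F_2)$, which uses $\calO_P(2K_X)\cong\calO_P(-C)\cong\calO_P$ for the irreducible curve $C\in|-2K_X|$ disjoint from $P$ --- i.e.\ it presupposes the very conclusion you are after. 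In the setting of Theorem \ref{thm:Coble-if-not-cubic} one only gets $\phi_{|P}$ given by $|K_P+p+q|$ a priori, and the paper's proof that $\deg\phi=2$ (Lemma \ref{lem:D}(iv)) is a substantial independent argument: project from a general point, show the image is a degree-$7$ surface whose singular locus consists of two double lines, a triple line and some double conics, and derive a parity contradiction on the geometric genus of a general hyperplane section. Nothing in your proposal substitutes for this, and you cannot quote Section \ref{Coble} for it.

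Second, your proposal has no working mechanism to force $n=10$. If $C\in|-2K_X|$ then $C^2=4K_X^2=4(9-n)$, so ``$C^2=-4$ forces $K_X^2=-1$'' only helps if you establish $C^2=-4$ \emph{independently} of $C\in|-2K_X|$; but your source for $C^2=-4$ is the $4$-nodal quartic Del Pezzo picture, which the paper only establishes on the $10$-point blow-up $Y=X_{10}$, not on $X$ itself. The paper's proof that $X=Y$ (Section \ref{par:X=Y}) is a separate argument you omit entirely: assuming an eleventh point, one shows at most two of the $10$-point subsets lie on cubics, gets several disjoint curves in $|-2K_{X_{10}}|$-type classes, and reaches a contradiction via the branch quartic of a degree-$2$ Del Pezzo surface when $\cha(\bbK)\neq 2$, and via a Chern-class count $c_2(\Omega^1_X\otimes\calL^{\otimes 2})<0$ for an inseparable double cover when $\cha(\bbK)=2$. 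Finally, your construction of $f_1,f_2$ via Gizatullin plus a $W_X$-orbit element is both harder and less controlled than the paper's route: by Lemma \ref{lem:No-Inf-Near} every $9$-point subset of $\calP$ is Cremona special, so each blow-down $Y_i$ is an unnodal Halphen surface and one gets all ten classes $f_i=-K_Y+e_i$ with $f_i\cdot f_j=1-\delta_{ij}$ for free; the exclusion of index $1$ and the proof that every $f_i$ is represented by a single irreducible curve then come from comparing two of these Halphen structures (Sections 5.2--5.4), a comparison unavailable if you only produce one invariant class $h$.
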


By definition, a \emph{Harbourne set} is a set $\calP$ of $n\ge 9$ nonsingular points on a cuspidal cubic $C_0$ 
over a field of positive characteristic with the following property: If $X$ denotes the surface obtained by blowing up $\calP$
and $C$ denotes the strict transform of $C_0$ in $X$, then $X$ has no $(-2)$-curves and the restriction morphism $\frakr$ from $K_X^\perp$
to $\Pic^0(C)$ has finite image. These point set are Cremona special; this is proved by Harbourne in \cite{Harbourne2}.

\begin{theorem}\label{thm:Harbourne-if-cubic}
Let $\calP$ be a point set of $n\geq 10$ points contained in a cubic curve $C_0\subset \bbP^2$. 
If $\calP$ is Cremona special, then $\bbK$ has positive characteristic, $C_0$ is a cuspidal cubic, 
and  $\calP$ is a Harbourne set.
\end{theorem}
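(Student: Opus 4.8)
The plan is to analyze the restriction homomorphism to the anticanonical curve and to convert the statement into a finiteness property of a $W_X$‑orbit. First I would record the geometric setup. Write $X$ for the blow‑up of $\calP$ and $C$ for the strict transform of $C_0$. By Proposition \ref{P3} the surface $X$ is unnodal, and this already forces $C_0$ to be irreducible and the points of $\calP$ to be distinct nonsingular points of $C_0$: a reducible cubic, a point sitting at a singularity, or an infinitely near point would produce a $(-2)$‑curve, contradicting unnodality (here one uses Lemma \ref{-2}). Hence $C\in|-K_X|$ is an irreducible curve of arithmetic genus $1$ with $C^2=9-n<0$, so it is rigid and is the unique anticanonical curve; consequently $\Aut(X)$ preserves $C$ and every $g\in\Aut(X)$ induces $\bar g:=g|_C\in\Aut(C)$. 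Restricting line bundles gives a homomorphism $\frakr\colon K_X^\perp\to\Pic^0(C)$, whose image lies in $\Pic^0(C)$ because $D\cdot C=-D\cdot K_X=0$ for $D\in K_X^\perp$, and it is equivariant: $\frakr\circ g^*=\bar g^*\circ\frakr$.

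Next I would extract the decisive dichotomy. The equivariance shows that $\Lambda:=\ker\frakr$ is invariant under $\Aut(X)^*$. Since $\calP$ is Cremona special and $n\ge 10$, the group $\Aut(X)^*$ has finite index in $W_X$ and is therefore Zariski dense in $\Or(K_X^\perp\otimes\bbR)$ (the Benoist--de la Harpe theorem already invoked in the proof of Proposition \ref{P3}); in particular its action on $K_X^\perp\otimes\bbR$ is irreducible, so $\Lambda$ is either finite or of finite index. The case $\Lambda=\{0\}$ is excluded: then $\frakr$ is injective, so $g\mapsto\bar g^*$ would embed $\Aut(X)^*$ into the group of automorphisms of $\Pic^0(C)$ induced by $\Aut(C)$, which acts only by scalings (an elliptic curve contributes its finite group of complex multiplications, $\bbG_m$ contributes $\{\pm 1\}$, and $\bbG_a$ contributes $\bbG_m$) and is in particular abelian; this is impossible because for $n\ge 10$ one has $K_X^2\neq 0$, so $\Aut(X)^*$ acts faithfully on $K_X^\perp$ and, being of finite index in the infinite Coxeter group $W_X$, is non‑abelian. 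Therefore $\Lambda$ has finite index and $G:=\im\frakr$ is a nontrivial finite subgroup of $\Pic^0(C)$ (nontrivial since distinct points give $\frakr(e_i-e_j)\neq 0$). This is precisely the finiteness condition appearing in the definition of a Harbourne set.

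It then remains to identify $C_0$ as a cuspidal cubic in positive characteristic. Here I would use the description of $\Aut(X)^*$ as the stabilizer of the point ${\sf{p}}=[\calP]$ for the Cremona action of $W_n$ on ${\sf{P}}^2_n$ (Section \ref{1.5}): a class $w$ is realized by an automorphism precisely when the transformed configuration $w\cdot{\sf{p}}$, whose points have classes $\frakr(w(e_i))$ on $C$, is projectively equivalent to $\calP$, i.e. matched to it by an element of the \emph{projective} automorphism group $\Aut(C_0,\bbP^2)$. The crucial point is that $\Aut(C_0,\bbP^2)$ is finite when $C_0$ is smooth or nodal, but contains the one‑parameter group $\bbG_m$ of scalings when $C_0$ is cuspidal; only in the latter case can the stabilizer of ${\sf{p}}$ be of finite index in $W_X$ for $n\ge 10$, and this forces $C_0$ to be cuspidal. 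Finally, $\Pic^0$ of a cuspidal cubic is $\bbG_a(\bbK)$, which is torsion free when $\cha(\bbK)=0$; finiteness of $G$ would then give $G=\{0\}$, hence $p_i\sim p_j$ on $C$ for all $i,j$, contradicting that the $p_i$ are distinct points of a genus‑one curve. Thus $\cha(\bbK)=p>0$, the nonzero finite group $G\subset\bbG_a$ is an $\bbF_p$‑subspace, and $\calP$ is a Harbourne set.

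The main obstacle is the third step: establishing, through the realizability (existence) theorem for automorphisms, that finiteness of the $W_X$‑orbit of the marking together with the finite image of $\frakr$ yields a finite‑index subgroup \emph{exactly} in the cuspidal positive‑characteristic case, and in particular ruling out the smooth and nodal cubics. The delicate input is that in characteristic $p$ the additive structure of $\bbG_a$ makes the relevant markings live in a projective space over a finite field $\bbF_{p^k}$, on which $W_X$ has finite orbits and whose scaling symmetries extend to $\bbP^2$, whereas for smooth or nodal $C_0$ the projective symmetries of the cubic are too few to realize a finite‑index family of elements of $W_X$. Making this comparison precise, through the structure of $\Aut(C)$ and of the automorphisms of $X$ that fix $C$, is the heart of the argument.
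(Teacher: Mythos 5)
Your first two steps are sound, and the way you obtain finiteness of $\im\frakr$ (equivariance of $\frakr$ under $\Aut(X)^*$, Zariski density of a finite-index subgroup of $W_X$ in $\Or(K_X^\perp\otimes\bbR)$, hence irreducibility, hence $\ker\frakr$ is $\{0\}$ or of finite index, with $\{0\}$ excluded because the induced automorphisms of $\Pic^0(C)$ form an abelian group) is a legitimate variant of the paper's route, which instead derives the torsion relations $s(\Sigma-p_i)\sim 3s\frakh$ from the fact that every $9$-point subset is a Halphen set. But the third step — excluding smooth and nodal cubics, and cuspidal cubics in characteristic $0$ — rests on a mechanism that does not work. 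You argue that realizing a finite-index family of elements of $W_X$ requires a large group $\Aut(C_0,\bbP^2)$ of projective symmetries, available only for the cuspidal cubic. This is backwards: once $G=\im\frakr$ is finite and $X$ is unnodal, the $W_X$-orbit of the marked configuration is itself \emph{finite} (each $p_i(w)$ lies in the finite set $p_i\oplus G$, since $w(e_i)-e_i\in K_X^\perp$), so the stabilizer automatically has finite index with no projective symmetries needed at all — this is exactly how Harbourne's sufficiency (Proposition \ref{P6.4}) works, uniformly for every irreducible cubic. So the size of $\Aut(C_0,\bbP^2)$ cannot be what rules out smooth and nodal cubics.

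The actual obstruction, and the heart of the paper's proof, is that for a smooth or nodal $C_0$ (and for a cuspidal one when $\cha(\bbK)$ does not divide $3m$) the group $\Pic^0(C)[m]$ needs at most $2$ generators over $\bbZ/m\bbZ$, so the kernel $S_\frakr$ is huge: $S_\frakr/mK_X^\perp$ contains a free $(\bbZ/m\bbZ)$-submodule of rank $8$ of $\bbE_{10}/m\bbE_{10}$. The paper then proves a purely arithmetic statement (Theorem \ref{T6.10bis}): any such submodule contains the reduction of a root of $\bbE_{10}$; the proof needs Witt's theorem over $\bbZ/p^k\bbZ$, control of spinor norms, and the Strong Approximation Theorem for $\bfSpin_{\bl}$ to lift the mod-$m$ isometry to $\Or(\bbE_{10})$. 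A root in $S_\frakr$ forces an effective root and hence a $(-2)$-curve (Lemmas \ref{L6.8} and \ref{L6.9}, via Riemann--Roch on $X$ and restriction to $C$), contradicting unnodality (Proposition \ref{P3}). None of this is present in your sketch; without it you cannot exclude, for example, ten $m$-torsion points on a complex elliptic curve, which form a torsion set with nontrivial finite $\im\frakr$ and for which your criteria are all met. Your characteristic-$0$ exclusion is correct only in the cuspidal case ($\bbG_a$ torsion-free); the smooth and nodal cases genuinely require the root-in-the-kernel argument.
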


These results imply the Main Theorem.  
In this section, we prove Theorem~\ref{thm:Coble-if-not-cubic}. 

\subsection{A surface $Y$ with ten Halphen pencils}\label{5.1}
Let  $\calP=\{p_1,\ldots,p_{n}\}$ be a Cremona special set, with the convention that $j>i$ if $p_j$ is infinitely near $p_i$. 
We blow up successively each point $p_i$  and denote by 
\[
X= X_n\to X_{n-1}\to \ldots \to X_1 \to \bbP^2
\] 
the sequence of blow-ups.  Let $q_i:X\to X_i$ be the corresponding birational morphism and $q_i^*:\Pic(X_i)\to \Pic(X)$ be the canonical homomorphism of the Picard groups. Obviously $q_i^*(K_{X_i}^\perp) \subset K_X^\perp$. Moreover the image under $q_i^*$ of  the geometric basis of $\Pic(X_i)$ (defined by the morphism $X_i\to \bbP^2$) coincides with the first $i+1$ vectors of the geometric basis of $X$ (defined by the morphism $X\to \bbP^2$). This allows one to identify $W_{X_i}$  with a subgroup of $W_X$. Since $\Aut(X)^*$ is of finite index in $W_X$, its intersection with $W_{X_i}$ is a subgroup of finite index; this subgroup preserves the exceptional curves obtained from the blowing ups of $p_j$ 
for $j> i$, and descends as a group of automorphisms of $X_i$. Thus all surfaces $X_i, i\ge 9,$ are  Cremona special, and all sets $\{p_1, \ldots, p_i\}$, $i\geq 9$, are Cremona special sets. By Theorem \ref{halthm},  the surface $X_9$ is an unnodal Halphen surface. In particular, the first $9$ points do not contain infinitely near points. Since the set of the first $10$ points is Cremona special,   Proposition  \ref{P3} shows that the surface $X_{10}$ has no $(-2)$-curves; hence $p_{10}$ is not infinitely near $p_i$ for $i\leq 9$ and the set $\{p_1,\ldots, p_{10}\}$ contains no infinitely near point. 

Let $i$ be an index between $1$ and $10$ such that $p_{11}$ is not infinitely near $p_i$. 
Consider the sequence of points $p_1$, ..., $p_{i-1}$, $p_{i+1}$, ..., $p_{11}$, and apply the same
argument.  We obtain that $p_{11}$ is not infinitely near the points $p_j$ for $1\leq j\leq 10$. 
By induction on $n$, and permutation of the points, this proves the following lemma. 

\begin{lemma}\label{lem:No-Inf-Near}
If $\calP$ is a Cremona special point set of $n\geq 9$ points, then $\calP$ 
is a proper subset of $n$ distinct points in $\bbP^2$, and all subsets of $\calP$
of $m\geq 9$ points are Cremona special.
\end{lemma}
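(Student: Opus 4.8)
The plan is to assemble the lemma from the geometric facts already in hand. The decisive inputs are Theorem~\ref{halthm}, by which a Cremona special set of nine points is an unnodal Halphen set and hence consists of nine distinct points of $\bbP^2$, and Proposition~\ref{P3}, by which a Cremona special surface carries no $(-2)$-curve; I will combine these with the descent observation established just above, namely that every initial segment of length $\ge 9$ of an admissibly ordered Cremona special sequence is again Cremona special.

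First I would isolate a \emph{reordering principle}: if $\calS\subseteq\calP$ is downward closed for the infinitely near order (whenever $p\in\calS$ is infinitely near $q$, also $q\in\calS$) and $|\calS|\ge 9$, then $\calS$ is Cremona special. The point is that listing $\calS$ first in an admissible order and then listing $\calP\setminus\calS$ yields an admissible ordering of $\calP$ in which $\calS$ is an initial segment; since the blow-up of $\calP$, and therefore the group $W_X$ and the subgroup $\Aut(X)^*$, depend only on the set $\calP$ and not on the chosen admissible order, the descent observation applies verbatim and makes $\calS$ Cremona special.

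The core of the argument is the local statement that a Cremona special set $\calS=\{q_1,\dots,q_{10}\}$ of ten points has no infinitely near point. Suppose it did. Since the infinitely near order on $\calS$ is a finite partial order, there is a point $q_b$ that is immediately infinitely near some $q_a$ and is maximal, so nothing in $\calS$ is infinitely near $q_b$. Reorder $\calS$ admissibly with $q_b$ last; by the descent observation $\{q_1,\dots,q_9\}$ is Cremona special, hence by Theorem~\ref{halthm} an unnodal Halphen set consisting of nine distinct points, among which lies the honest point $q_a$. Blowing up $q_b$, which is a point of the $(-1)$-curve lying over $q_a$, converts that curve into a $(-2)$-curve on the blow-up of $\calS$, contradicting Proposition~\ref{P3}.

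It remains to globalize and then read off the two assertions. If $n=9$ there is nothing more to prove, by Theorem~\ref{halthm}. If $n\ge 10$ and some point of $\calP$ were infinitely near another, the downward closure $D$ of a suitable point would be a chain with $|D|\ge 2$; taking an admissible order of $\calP$ that lists $D$ first from the bottom up and letting $\calS$ be its initial segment of length $10$, one obtains a downward closed set of ten points containing the two lowest members of $D$, an infinitely near pair. The reordering principle makes $\calS$ Cremona special, contradicting the local statement, so $\calP$ consists of $n$ distinct points of $\bbP^2$. Finally, once $\calP$ has no infinitely near point every subset is automatically downward closed, so the reordering principle shows that each subset of size $m\ge 9$ is Cremona special. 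I expect the main obstacle to be the bookkeeping behind the reordering principle---verifying that every admissible ordering produces the same surface and that downward closed initial segments inherit Cremona speciality---whereas the geometric heart, detecting an infinitely near point as a $(-2)$-curve, is already delivered by Proposition~\ref{P3}.
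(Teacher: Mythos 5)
Your proposal is correct and follows essentially the same route as the paper: descent of a finite-index subgroup of $\Aut(X)^*$ to the surfaces obtained from initial segments, Theorem~\ref{halthm} to make nine-point subsets unnodal Halphen sets, Proposition~\ref{P3} to detect an infinitely near point as a $(-2)$-curve, and admissible reorderings to pass from initial segments to arbitrary subsets (the paper's ``by induction on $n$, and permutation of the points''). Your explicit formulation of the reordering principle and the ten-point ``local statement'' merely packages more carefully what the paper does tersely with the points $p_{10}$, $p_{11}$, and the index $i$.
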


Let $Y = X_{10}$, $E_i\subset Y$ be  the $(-1)$-curve obtained by blowing up $p_i$, $1\leq i\leq 10$,  and $\sigma_{E_i}:Y\to Y_i$ be the blow-down of $E_i$. Since $Y_i$ is Cremona special, it is an unnodal Halphen surface of some index $m_i$. The pre-image of the elliptic fibration of $Y_i$  is an elliptic pencil  $|m_if_i|$ on $Y $, where  
\[
f_i = -K_Y+e_i=\sigma_{E_i}^*(-K_{Y_i}).
\]
The divisor classes $f_1,\ldots,f_{10}$ are exactly the primitive isotropic vectors in $K_Y^\perp$ which we introduced in the proof of Theorem \ref{discr}. 
We denote by ${\tilde{p_i}}\in Y_i$ the point $\sigma_{E_i}(E_i)$; this point is the pre-image
of $p_i$ under the natural projection $Y_i\to \bbP^2$.

\subsection{Almost all indices are different from 1} 

Suppose that  two indices $m_i$ and $m_j$ are equal to $1$, say $m_1 = m_2 = 1$. Let $B_i\subset Y_i$, $i=1,2$, be the fiber of the Halphen fibration which contains ${\tilde{p_i}}$, and let $a_i$ be the multiplicity of this fiber at ${\tilde{p_i}}$. 
Let $A_i\subset Y$ be the strict transform of $B_i$ under the morphism $\sigma_{E_i}$.
Since the fibers of the Halphen surface $Y_i$ are irreducible, both $A_i$ and $B_i$ are irrreducible.
Then $-K_Y$ is effective and represented by $A_i+(a_i-1)E_i$. Since $K_Y^2 = -1$, the  two curves $A_1+(a_1-1)E_1$ and $A_2+(a_2-1)E_2$ have a common component, and thus $A_1 = A_2 $; as a consequence,  $a_1=a_2=1$, i.e. ${\tilde{p_i}}$ is a smooth point of $B_i$, for $i=1,2$. Denote by $A$ the curve $A_1=A_2$. 
By Lemma \ref{wk}, the normal bundle of $A_i$ is equal to $\calO_{A_i}(-E_i\cap A_i), i = 1,2.$ Thus the points $E_1\cap A$ and $E_2\cap A$ are linearly equivalent on $A$. Since $p_1\neq p_2$ and $A$ is an irreducible curve of arithmetic genus $1$, we get a contradiction. This proves the following lemma. 

\begin{lemma}\label{lem>1}
At least nine of the indices $m_i$, $1\leq i\leq 10$, are larger than $1$.
\end{lemma}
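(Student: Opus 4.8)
The plan is to argue by contradiction: assume that at least two of the indices equal $1$, say $m_1 = m_2 = 1$, and derive a contradiction. The overall strategy is to produce two effective representatives of the anticanonical class $-K_Y$ coming from the two index-$1$ Halphen fibrations, to observe that $(-K_Y)^2 = K_Y^2 = -1 < 0$ forces them to share a component, to conclude that this common component is a single irreducible anticanonical curve $A$, and finally to compare the two ways $A$ meets the exceptional curves $E_1$ and $E_2$ using the normal-bundle computation of Lemma \ref{wk}.

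First I would set up the two anticanonical divisors. Since $Y_i$ is an unnodal Halphen surface of index $m_i = 1$, the linear system $|-K_{Y_i}|$ is an irreducible, base-point-free pencil of genus-$1$ curves with irreducible members; let $B_i$ be the member passing through $\tilde{p}_i = \sigma_{E_i}(E_i)$, and let $a_i$ be its multiplicity at $\tilde{p}_i$. Pulling back along $\sigma_{E_i}$ and using $-K_Y = \sigma_{E_i}^*(-K_{Y_i}) - E_i$, one gets that $-K_Y$ is represented by the effective divisor $A_i + (a_i - 1)E_i$, where $A_i$ is the strict transform of $B_i$; both $A_i$ and $B_i$ are irreducible because the fibers of $Y_i$ are irreducible.

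Next I would exploit the negativity $(-K_Y)^2 = -1$. Two effective divisors with no common component meet nonnegatively, so the two representatives $A_1 + (a_1-1)E_1$ and $A_2 + (a_2-1)E_2$ of $-K_Y$ must share a component. Since $E_1 \ne E_2$, and a class computation rules out $A_1 = E_2$ or $A_2 = E_1$ (the coefficient of $e_0$ in $[A_i]$ equals $3$, whereas $[E_j] = e_j$), the shared component must be $A_1 = A_2 =: A$. Comparing the two expressions for $-K_Y$ and using the independence of $e_1, e_2$ then forces $(a_1-1)e_1 = (a_2-1)e_2$, hence $a_1 = a_2 = 1$; in particular $\tilde{p}_i$ is a smooth point of $B_i$, and $A$ is a single irreducible anticanonical curve, of arithmetic genus $1$, with $A \cdot E_i = -K_Y \cdot E_i = 1$.

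The last step carries the contradiction. Because $a_i = 1$, I can apply Lemma \ref{wk} to the blow-up $\sigma_{E_i}\colon Y \to Y_i$ at $\tilde{p}_i$, with $B_i$ passing simply through $\tilde{p}_i$: it yields $\calO_A(A + E_i) \cong (\sigma_{E_i}|_A)^*\calO_{B_i}(B_i)$. Since $B_i$ is a member of a base-point-free pencil it is disjoint from a general member of $|-K_{Y_i}|$, so $\calO_{B_i}(B_i) \cong \calO_{B_i}$, and therefore $\calO_A(A) \cong \calO_A(-A \cap E_i)$ for both $i = 1, 2$. Thus the degree-$1$ divisors $A \cap E_1$ and $A \cap E_2$ are linearly equivalent on $A$; as $A$ is an irreducible curve of arithmetic genus $1$ and both points lie on its smooth locus, linear equivalence forces $A \cap E_1 = A \cap E_2$. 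But these points project under $\pi\colon Y \to \bbP^2$ to the distinct points $p_1 \ne p_2$, a contradiction. This excludes two indices equal to $1$, which is exactly the assertion that at least nine of the $m_i$ exceed $1$. I expect the delicate points to be the bookkeeping that pins the common component down to $A_1 = A_2$ (ruling out the mixed cases) together with the triviality of $\calO_{B_i}(B_i)$; both are short, but they rely essentially on the irreducibility of the fibers and on the index being exactly $1$, and the concluding rigidity on $A$ should be phrased via the group law on the smooth locus of a possibly nodal genus-$1$ curve.
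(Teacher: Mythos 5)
Your proof is correct and follows essentially the same route as the paper's: both produce the two effective anticanonical representatives $A_i+(a_i-1)E_i$ from the index-$1$ Halphen fibrations, use $K_Y^2=-1$ to force $A_1=A_2$ and $a_1=a_2=1$, and then apply Lemma \ref{wk} to show the distinct points $E_1\cap A$ and $E_2\cap A$ would be linearly equivalent on the irreducible genus-$1$ curve $A$, a contradiction. The extra bookkeeping you supply (ruling out the mixed common-component cases and spelling out $\calO_{B_i}(B_i)\cong\calO_{B_i}$) is accurate and only makes explicit what the paper leaves implicit.
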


\subsection{An alternative}\label{par:Alternative}
Effective curves with divisor class $f_i$ are total transforms of fibers of the Halphen fibration on $Y_i$ with multiplicity $m_i$. Thus, if $m_i=1$,  the class $f_i$ is represented by an irreducible curve (the total transform of any fiber that does not
contain $p_i$), and if $m_i\geq 2$ the class $f_i$ is represented by an irreducible
curve if and only if ${\tilde{p_i}}$ is not contained in the multiple fiber of $Y_i$. 

\begin{lemma}\label{lem:cubic-curve-alternative}
Either the set $\{p_1, \ldots, p_{10}\}$ is contained in (the smooth part of) a cubic curve $C$ 
and then all $f_i$, $1\leq i\leq 10$, are represented by $C'+E_i$ where $C'$ is the strict transform of 
$C$; or $\{p_1, \ldots, p_{10}\}$  is not contained in a cubic curve, and then 
all classes $f_i$ are represented by at least one irreducible curve.
\end{lemma}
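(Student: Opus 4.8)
The plan is to read off the whole statement from the criterion established in the paragraph immediately preceding the lemma: for each $i$ the class $f_i$ \emph{fails} to be represented by an irreducible curve precisely when $m_i\geq 2$ and the point $\tilde p_i$ lies on the reduced multiple fiber $F_0^{(i)}$ of the unnodal Halphen surface $Y_i$. Since a point set either lies on a cubic or it does not, I would prove the two matching implications separately: if $\calP$ lies on no cubic then every $f_i$ is irreducible, and if $\calP$ lies on a cubic then that cubic is irreducible, meets $\calP$ in its smooth locus, and $f_i=[C'+E_i]$ for every $i$.

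The first implication I would prove by contraposition, and it is immediate from the criterion. Suppose some $f_i$ is not represented by an irreducible curve. Then $m_i\geq 2$ and $\tilde p_i\in F_0^{(i)}$. The image of $F_0^{(i)}$ under $Y_i\to\bbP^2$ is a plane cubic $C$; it passes through the nine points $p_k$ ($k\neq i$), which are the smooth base points of the Halphen pencil of $Y_i$, and since $\tilde p_i$ maps to $p_i$ it also passes through $p_i$. Thus all ten points lie on $C$, contrary to hypothesis. No information about smoothness is needed in this direction.

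For the converse the class computation is the trivial part: if $\calP$ consists of ten smooth points of a cubic $C$, each of multiplicity one, then the strict transform satisfies $[C']=3e_0-(e_1+\cdots+e_{10})=-K_Y$, so $f_i=-K_Y+e_i=[C']+[E_i]$ represents $f_i$ for all $i$. The substance of this direction, and the step I expect to be the main obstacle, is to show that a cubic through $\calP$ must be irreducible and must meet $\calP$ only in its smooth locus. Here I would use Lemma \ref{lem:No-Inf-Near}, by which every nine-point subset of $\calP$ is Cremona special, so by Theorem \ref{halthm} each blow-down $Y_j$ is an unnodal Halphen surface; in particular $-K_{Y_j}$ is nef, and adjunction forces every irreducible curve on $Y_j$ to have self-intersection at least $-2$ (and likewise on the unnodal surface $Y$). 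Irreducibility of $C$ follows by counting: a line component can carry at most two of the points and a conic component at most five, because three collinear points or six points on a conic would create a $(-2)$-curve; a reducible cubic therefore cannot pass through all ten points. Smoothness follows the same way: were some $p_s$ a singular point of the irreducible cubic $C$, then for any $j\neq s$ the strict transform of $C$ on $Y_j$ (which blows up $p_s$ together with eight smooth points of $C$) would be an irreducible curve of self-intersection $1-\mult_{p_s}(C)^2\leq -3$, contradicting the nefness of $-K_{Y_j}$.

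Combining the two implications yields the stated dichotomy. The only delicate point is the irreducibility-and-smoothness argument in the cubic case; its interest is that this is exactly where one must invoke the full hypothesis that \emph{all} nine-element subsets of $\calP$ are Cremona special, rather than merely that the single surface $Y$ is unnodal.
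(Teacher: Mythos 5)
Your proof is correct and follows essentially the same route as the paper's: both directions rest on the criterion stated just before the lemma (for $m_i\ge 2$, the class $f_i$ has an irreducible representative iff $\tilde p_i$ avoids the multiple fiber of $Y_i$), and your smoothness argument is exactly the paper's remark that a singular $p_s$ would make the Halphen surface $Y_j$, $j\ne s$, non-special. You merely make explicit some details the paper leaves implicit (the self-intersection computation for the strict transform, the line/conic point count for irreducibility, and the class identity $f_i=[C'+E_i]$), all of which check out.
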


\begin{proof}
If the set $\calP_{10}=\{p_1, \ldots, p_{10}\}$ is contained in a cubic curve $C$, no $p_i$ is a singular
point of $C$, because otherwise the Halphen surface $Y_j$, $j\neq i$, would not be special. 
If the set $\calP_{10}$ is not contained in a cubic curve, then $p_i$ is not contained in the multiple 
fiber of $Y_i$, so that $f_i$ is represented by an irreducible curve. 
\end{proof}

For the remainder of this section, we make the following assumption, where Lemma \ref{lem:cubic-curve-alternative} 
is used to prove the equivalence. 

\begin{itemize}
\item[(A)] \textit{Each primitive isotropic class $f_i$ is represented by an irreducible curve. Equivalently, there is no irreducible cubic curve containing  the set $\{p_1, \ldots, p_{10}\}$.  In particular, the multiple fiber of $Y_i$ does not contain~$\tilde{p_i}$.}
\end{itemize}

In other words, we now assume that $Y$ satisfies the assumption of Theorem \ref{thm:Coble-if-not-cubic}.
Note that  the irreducibility of the cubic curves in Assumption (A) follows from Proposition \ref{P3}. 

\subsection{All indices are different from 1}
Suppose now that one index, say $m_1$, is equal to $1$; by Lemma \ref{lem>1}, $m_i\geq 2$ for $i\geq 2$. As above, let $B_1\subset Y_1$ be the fiber of the Halphen fibration which 
contains the point ${\tilde{p_1}}$, $a_1\leq 2$ be its multiplicity at ${\tilde{p_1}}$, and $A_1$ 
be its strict transform in $Y$; then 
\[
\sigma_{E_1}^*(B_1)=f_1= A_1 + a_1 E_1.
\] 
Let $B_2\subset Y_2$ be the fiber of the Halphen fibration which contains the point ${\tilde{p_2}}$. By our assumption (A), the curve 
$B_2$ is not the multiple fiber of the Halphen surface~$Y_2$.
Let $A_2$ be the strict transform of $B_2$ and $a_2$ be the multiplicity of $B_2$
at ${\tilde{p_2}}$. 
Then 
\[
A_2\sim m_2 f_2-a_2E_2 \sim m_2(-K_Y+E_2)-a_2E_2 \sim -m_2 K_Y + (m_2-a_2)E_2
\]
 and 
\begin{eqnarray*}
A_1\cdot A_2 & = & (-K_Y+(1-a_1) E_1)
\cdot (-m_2 K_Y + (m_2-a_2)E_2) \\
& = & -m_2 + (m_2-a_2) + m_2(1-a_1)\\
& = & -a_2 - (a_1-1) m_2;
\end{eqnarray*}
this number is negative because $a_i\geq 1$.
Since $A_1$ and $A_2$ are irreducible curves, we obtain $A = A_1 = A_2$. 
Since $A_1$ is the strict transform of a cubic curve, this contradicts our standing
assumption (A) and proves the following lemma.

\begin{lemma}
Under assumption (A), all indices $m_i$ are larger than $1$.
\end{lemma}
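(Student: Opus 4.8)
The plan is to argue by contradiction, exploiting the dichotomy between index: an index-$1$ Halphen fibration is cut out by plane cubics, whereas assumption (A) forbids any cubic through all ten points. Suppose some index equals $1$, and relabel so that $m_1=1$. Blowing down $E_1$ presents $Y_1$ as the blow-up of $\bbP^2$ at the nine points $p_2,\dots,p_{10}$, and $Y_1$ is an unnodal Halphen surface of index $1$; hence $|-K_{Y_1}|$ is a base-point-free pencil of irreducible genus-$1$ curves whose members project to irreducible plane cubics through $p_2,\dots,p_{10}$. The point $\tilde{p_1}\in Y_1$ (which projects to $p_1$) lies on a unique such member $B_1$, irreducible and of multiplicity $a_1\in\{1,2\}$ at $\tilde{p_1}$. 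Its image in $\bbP^2$ is then an irreducible cubic through $p_2,\dots,p_{10}$ and through $p_1$, i.e. through all ten points, which already contradicts (A). First I would record exactly this observation; no further input is strictly needed.

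To present the argument in the quantitative, lattice-theoretic style of Lemma \ref{lem>1}, I would instead compare $B_1$ with a fiber on $Y_2$. Writing $A_1\sim -K_Y+(1-a_1)E_1$ for the strict transform of $B_1$ (so $\sigma_{E_1}^*(B_1)=f_1=A_1+a_1E_1$), note that by Lemma \ref{lem>1} every index $m_i$ with $i\geq 2$ satisfies $m_i\geq 2$, in particular $m_2\geq 2$. Let $B_2\subset Y_2$ be the fiber through $\tilde{p_2}$; here assumption (A) enters essentially, guaranteeing that $\tilde{p_2}$ avoids the multiple fiber of $Y_2$, so that $B_2$ is a reduced irreducible fiber of class $-m_2K_{Y_2}$, with irreducible strict transform $A_2\sim m_2f_2-a_2E_2\sim -m_2K_Y+(m_2-a_2)E_2$ for some $a_2\geq 1$. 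A short computation using $K_Y^2=-1$, $K_Y\cdot E_i=-1$ and $E_1\cdot E_2=0$ gives $A_1\cdot A_2=-a_2-(a_1-1)m_2<0$. Since two distinct irreducible curves meet non-negatively, this forces $A_1=A_2$; but $A_1$ is (the strict transform of) a cubic through all ten points while $\deg A_2=3m_2\geq 6$, so once again (A) is contradicted.

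The one step that genuinely needs care, and which I expect to be the main obstacle, is the essential use of (A) to push $\tilde{p_2}$ off the multiple fiber of $Y_2$. Without it, $B_2$ could be the reduced multiple fiber $F_0$, of class $-K_{Y_2}=f_2$ rather than $-m_2K_{Y_2}=m_2f_2$; then $A_2$ would itself be a cubic, the intersection number above would change, and the degree contradiction $\deg A_1\neq\deg A_2$ would evaporate. One must also invoke the unnodality of $Y_1$ and $Y_2$ to know that $B_1$ and $B_2$ (hence $A_1$ and $A_2$) are irreducible before identifying the offending cubic or applying the negative-intersection principle.
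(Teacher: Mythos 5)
Your proposal is correct, and your second paragraph is essentially the paper's own proof, down to the same intersection computation $A_1\cdot A_2=-a_2-(a_1-1)m_2<0$ (using $K_Y^2=-1$, $-K_Y\cdot E_i=1$, $E_1\cdot E_2=0$), the same appeal to Lemma \ref{lem>1} to get $m_2\ge 2$, and the same two uses of assumption (A): to push $\tilde{p_2}$ off the multiple fiber of $Y_2$ (so that $B_2\in|-m_2K_{Y_2}|$) and to derive the final contradiction; the paper likewise relies on unnodality of $Y_1$, $Y_2$ for irreducibility of $A_1$, $A_2$. Your opening observation is a genuine shortcut that the paper does not make explicit: since $m_1=1$ means the fibration on $Y_1$ is $|-K_{Y_1}|$, the fiber $B_1$ through $\tilde{p_1}$ is (being irreducible by unnodality) the proper transform of an irreducible plane cubic through $p_2,\ldots,p_{10}$ which also contains $p_1$, so (A) is violated immediately --- equivalently, $A_1\cdot e_0=3$ and $A_1\cdot E_j>0$ for every $j$ already exhibit the forbidden cubic, with no need for $A_2$, for Lemma \ref{lem>1}, or for the negativity argument. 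The paper's longer route buys only stylistic uniformity with the preceding lemma (where two index-one fibrations must genuinely be played against each other); your direct version is shorter and needs fewer inputs. Both are valid.
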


\subsection{The linear system $|D|= |2f_1+2f_2|$}\label{par:System-D-II}

We now show that all indices $m_i$ are indeed equal to $2$; permuting the indices, it is
sufficient to consider $m_1$ and $m_2$: We already know  that $m_1, m_2 > 1$ and we want to show that $m_1=m_2=2$.
For this purpose, 
we study the linear system $\vert 2 f_1 + 2f_2\vert$.

Since $m_i\geq 2$ for all $i$, the linear system $|f_i|$ consists of a unique element. 
Hence, 
\[
|f_1| = \{F_1\}, \quad |f_2| = \{F_2\},
\]
where the divisors $F_1$ and $F_2$ are irreducible curves of arithmetic genus $1$ by assumption (A). Since 
$F_1\cdot F_2 = 1$, the curves $F_1$ and $F_2$ intersect transversally at one point
$$y_0 = F_1\cap F_2.$$
Denote by $\vert D\vert$  the linear system $| 2F_1 + 2F_2|$. 
 
 \begin{lemma}\label{lem:D} Under assumption (A), the following properties are satisfied. 
 \begin{itemize}
\item[(i)] The linear system $|D| = |2F_1+2F_2|$  satisfies 
\[
 D^2= 8, \, \, \, \dim |D| = 4,
 \]
it has no fixed component, and it has no base point. It defines a morphism $\phi:Y\to \bbP^4$.
 
 \item[(ii)] Let $S = \phi(Y)\subset \bbP^4$ be its image. 
 Then, on each curve $F_i$, $i=1,2$, $\phi$ restricts to
 a $2$-to-$1$ cover onto a line $\ell_i\subset S$. The two lines $\ell_1$ and $\ell_2$ are different.
 
 \item[(iii)]
  The linear system $|F_1+F_2|$ is an irreducible pencil with two base points $y_1\in F_1,y_2\in F_2$. The points 
  $s_i = \phi(y_i), i = 0,1,2$, span a plane  in the projective space $\bbP^4$.
   
 \item[(iv)] The degree of $\phi\colon Y\to \bbP^4$ is equal to $2$, and its image is an anti-canonical Del Pezzo surface $S\subset \bbP^4$ of degree $4$; 
 in particular, $S$ is a normal surface. 
 
 \item[(v)] The pre-image of $s_0$ under the map $\phi$ consists of exactly  one point: $\phi^{-1}\{ s_0\}=\{ y_0 \}$.
 \end{itemize}
\end{lemma}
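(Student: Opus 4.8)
The plan is to run the whole argument in parallel with the Coble case of Section~\ref{par:Unnodal-Coble}, transporting Lemmas~\ref{lem:System-D}, \ref{lem:System-D2} and \ref{base} by means of Remark~\ref{rem:System-D}, since their proofs used only the intersection numbers of $f_1,f_2,K_Y$ and the irreducibility of the unique members $F_1,F_2$ (guaranteed here by assumption (A)). Concretely, (i) comes from the three exact sequences modeled on~\eqref{three}, which give $h^0(\calO_Y(2F_1+2F_2))=5$, no fixed part and no base point, while $D^2=(2f_1+2f_2)^2=8$; (ii) follows because $D\cdot F_i=2$ makes $\phi_{|F_i}$ a double cover of a line $\ell_i$, and $h^0(D-F_1)=3>2=h^0(D-F_1-F_2)$ produces a hyperplane through $\ell_1$ but not $\ell_2$, so $\ell_1\ne\ell_2$; (iv) is the bicanonical argument of Lemma~\ref{lem:System-D2} applied to the irreducible genus-$2$ general member $P\in|F_1+F_2|$, which forces $\deg\phi=2$ and identifies $S=\phi(Y)$ as a normal anticanonical quartic Del~Pezzo surface. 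For (iii), $h^0(F_1+F_2)=2$ and $(F_1+F_2)^2=2$ give an irreducible pencil with one base point $y_i$ on each $F_i$; since $\calO_{F_i}(F_i)$ has order $m_i>1$, the unique effective divisor $y_i$ in the degree-one class $\calO_{F_i}(F_1+F_2)\cong\calO_{F_i}(F_i)(y_0)$ differs from $y_0$.

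The heart of the matter is (v). I would introduce the deck involution $\tau$ of the degree-$2$ morphism $\phi$; as in the proof of Theorem~\ref{autocoble} it is well defined and separable, because the elliptic (not quasi-elliptic) fibration on the unnodal surface $Y$ forbids an inseparable map onto a rational curve. The decisive point is that $\tau$ preserves each $F_i$: the restriction $\phi_{|F_i}\colon F_i\to\ell_i$ already has degree $2=\deg\phi$, so $F_i$ is the only component of $\phi^{-1}(\ell_i)$ dominating $\ell_i$, and the irreducible curve $\tau(F_i)$, which also dominates $\ell_i$, must equal $F_i$. Hence $\tau(y_0)=\tau(F_1)\cap\tau(F_2)=F_1\cap F_2=\{y_0\}$, so $y_0$ is a fixed point of $\tau$, i.e.\ a ramification point of $\phi$. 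Restricting, $\tau_{|F_i}$ is the nontrivial involution of the double cover $\phi_{|F_i}$ fixing $y_0$, so $y_0$ is a ramification point of $\phi_{|F_i}$; combined with $\calO_{F_i}(D)\cong\calO_{F_i}(F_i)^{\otimes2}(2y_0)$ this also prepares the subsequent conclusion $m_1=m_2=2$.

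It remains to upgrade $\tau(y_0)=y_0$ to the scheme-theoretic equality $\phi^{-1}(s_0)=\{y_0\}$, and I expect this to be the only real obstacle, since $\phi$ need not be finite: a priori a curve could be contracted onto $s_0$. The key is that every irreducible curve $Z$ contracted by $\phi$ satisfies $D\cdot Z=0$, hence (the $f_i$ being nef) $f_1\cdot Z=f_2\cdot Z=0$, so $Z\cdot F_i=0$ and $Z$ is disjoint from the irreducible curves $F_1,F_2$; in particular no contracted curve passes through $y_0=F_1\cap F_2$, so $\phi$ is finite near $y_0$ and the fibre over $s_0$ is reduced to $y_0$ in a neighbourhood. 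To rule out a stray global preimage I would pass to the Stein factorization $\phi=\sigma\circ\phi'$ with $\phi'\colon Y\to Y/\tau$ finite: because $\tau$ fixes $f_1$, every contracted curve and its $\tau$-image avoid $F_1$, so $\phi'(y_0)$ lies off every curve contracted by $\sigma$; hence $\sigma$ is a local isomorphism at $\phi'(y_0)$ and $\phi^{-1}(s_0)=\phi'^{-1}(\phi'(y_0))$ is the single $\tau$-orbit $\{y_0\}$. Finally, since $y_1,y_2\ne y_0$ then avoid the fibre over $s_0$, we obtain $s_1,s_2\ne s_0$, so that $s_0,s_1,s_2$ span the plane $\langle\ell_1,\ell_2\rangle$, completing (iii).
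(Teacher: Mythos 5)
Your parts (i), (ii) and the first half of (iii) do match the paper's proof (the three exact sequences of \eqref{three}, the degree-$2$ restriction to each $F_i$, the base points $y_i\in|y_0+\epsilon_i|$). Note only that the paper establishes $s_i\ne s_0$ already inside (iii), by the one-line computation $2y_0+2\epsilon_i\sim y_0+y_i+\epsilon_i\not\sim y_0+y_i$, rather than deferring it to (v); it needs this order because the plane spanned by $s_0,s_1,s_2$ is an input to its proof of (iv). The serious gap is in (iv). The bicanonical argument of Lemma~\ref{lem:System-D2} rests on the isomorphism $\omega_P^{\otimes 2}\cong\calO_P(2F_1+2F_2)$, i.e.\ on $\calO_P(2K_X)\cong\calO_P$, which in the Coble case comes from the curve $C\in|-2K_X|$ being disjoint from a general $P\in|F_1+F_2|$. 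In the setting of Lemma~\ref{lem:D} you do not yet know that $-2K_Y$ is effective --- that $Y$ is a Coble surface is the conclusion of the whole section --- so $\calO_P(D)$ is a priori $\omega_P^{\otimes 2}$ twisted by the unknown degree-zero class $\calO_P(-2K_Y)$, and the restriction of $|D|$ to $P$ need not be $|2K_P|$; assuming it is, is circular. The paper confronts exactly this point: if the degree-$4$ series on $P$ is not $|2K_P|$, it is of the form $|K_P+p+q|$ with $p+q\notin|K_P|$, and the case $\deg\phi=1$ is then excluded by a long projection argument (project $S$ from a general point into $\bbP^3$, determine the singular locus of the resulting degree-$7$ surface --- two double lines, a triple line, double conics --- and reach a parity contradiction on the geometric genus of a general hyperplane section). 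This is the heart of the lemma and your proposal omits it entirely.

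Part (v) also has a gap. Your argument hinges on the deck transformation $\tau$ being a biregular involution of $Y$, so that $\phi$ factors as $Y\to Y/\tau\to S$ with the first map finite. In Theorem~\ref{autocoble} this is legitimate because one knows precisely which curve $\phi$ contracts (the single curve $C$) and that the map becomes finite after blowing up $q$; here the contracted locus is unknown, and a generically $2$-to-$1$ morphism from a smooth surface need not carry a biregular deck involution: the birational involution has fundamental points at isolated preimages of points whose fibre also contains a contracted curve. In particular, your argument does not exclude the possibility that $\phi^{-1}(s_0)=\{y_0\}\cup Z$ for a contracted curve $Z$ disjoint from $F_1\cup F_2$; in that scenario $\tau$ is undefined at $y_0$ and the step $\tau(y_0)=y_0$ collapses. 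The paper argues differently: the Stein decomposition bounds the fibre, and the blow-up of $s_0$ together with the ramification of $F_1\to\ell_1$ at $y_0$ shows that the preimage of the exceptional curve cannot be disconnected, whence $s_0$ is a singular point of $S$, a ramification point of $\phi$, and has a unique preimage.
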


\begin{proof} (i) The argument is the same as  in the proof of Lemma \ref{lem:System-D}, 
where we used only that $f_1$ and $f_2$ were represented by irreducible curves $F_1$ and $F_2$ that do not move 
(see Remark \ref{rem:System-D}).

(ii)  Each curve $F_i$, $i=1,2$, is an irreducible curve of arithmetic genus $1$.
It follows from the third exact sequence in \eqref{three} that the restriction of $|D|$ to the curve $F_i$ is given by a complete linear system of degree $2$. Thus, for $i=1,2$, the curve $\phi(F_i)$ is a line $\ell_i\subset S$ and $\phi$ restricts to a $2$-to-$1$ cover between $F_i$ and $\ell_i$. 
The same exact sequences show that $\ell_1\ne \ell_2$.

(iii) The first exact sequence in 
\eqref{three} shows that $|F_1+F_2|$ is a pencil of curves of arithmetic genus $2$. 
Since $(F_1+F_2)\cdot F_1=1$, its restriction to $F_1$ (resp. to $F_2$) has degree~$1$. 

Let $\epsilon_i$ be the nontrivial $m_i$-torsion divisor class on $F_i$ such that $\calO_{F_i}(\epsilon_i) \cong \calO_{F_i}(F_i)$. 
For $i=1,2$ we have $\calO_{F_i}(F_1+F_2) \cong \calO_{F_i}(y_i)$, where $y_i$ is an element of the linear system $|y_0+\epsilon_i|$ on $F_i$. Clearly, $y_i\ne y_0$ and does not move because the arithmetic genus of $F_i$ is $1$. This shows that 
$|F_1+F_2|$ has no fixed components and its base points are $y_1$ and $y_2$. 

The restriction of $\phi$ to $F_i$ is given by the linear system $|D|$, i.e. by  
\[
|\calO_{F_i}(2F_1+2F_2)| = |\calO_{F_i}(2y_0+2\epsilon_i)|.
\] 
Since $2y_0+2\epsilon_i \sim y_i+y_0+\epsilon_i\not\sim  y_0+y_i$, we obtain that the points $y_0$ and $y_i$ are not mapped to the same point by $\phi$; thus, 
$s_0=\phi(y_0)$ and $s_i=\phi(y_i)$, $i=1,2$, are distinct. 
As a consequence, the plane $\Pi_0$ spanned by the two lines $\ell_1$ and $\ell_2$ is also spanned by the three points $s_0$, $s_1$, and $s_2$.

(iv) Let us now prove that the degree of $\phi$ is equal to 2, i.e. $\deg S = 4$. Then, as in Section \ref{par:Unnodal-Coble} (see after Lemma \ref{lem:System-D2}), the image $S$ of $\phi$ is a del Pezzo surface of degree 4.  

Since the map is given by a complete linear system, and a quadric surface in $\bbP^4$ is contained in a hyperplane, the only other possibility is that the degree is equal to 1,  i.e. $\deg S = 8$; thus, we now assume $\deg(S)=8$ and seek a contradiction. 

Let $\frako$ be a general point of $Y$; in particular, $\frako$ is not a base point 
of $\vert F_1+F_2\vert$ and its image $\frako'=\phi(\frako)$ in $S$ is not contained in the plane $\Pi_0$. 
Let $\pi:S\to S' \subset \bbP^3$ be the projection from  $\frako'$. The 
composition $\pi\circ\phi$ is a rational map given by the linear subsystem $L$ of $ |2F_1+2F_2|$ of divisors passing through the point 
$\frako$. Replacing $Y$ by its blow-up $Y' \to Y$ at the point  $\frako$, we obtain a birational morphism $\phi':Y'\to S'$ onto a surface of 
degree 7. 

A general member $P$ of the pencil $|F_1+F_2|$ is an irreducible curve of arithmetic genus 2. Consider the restriction of the map $\phi'$ to $P$; it is given by a linear series of degree $4$; would this linear series  coincide with $\vert 2K_P\vert$, then $\phi'_{\vert P}\colon P\to \phi'(P)$ would be a $2$ to $1$ cover onto a conic curve (see the proof of Lemma \ref{lem:System-D2}), and $\phi$ would have degree $2$. Thus, $\phi'_{\vert P}$ is given by  a linear series $|K_P+p+q|$  where $p+q\not\in |K_P|$. 
The image $\phi'(P)$ is a plane quartic curve,  the points $p,q$ are mapped to a singular point of $\phi'(P)$, and $\phi'_{\vert P}$ provides an isomorphism from $P\setminus\{p,q\}$ to $\phi'(P)\setminus\{\phi'(p)\}$.

The unique member $P_0$  from $|F_1+F_2|$ which passes through the point $\frako$ is mapped to a plane  curve on $S$ passing through $\frako'$. This curve has degree $4$,   hence it is projected to a triple line $\ell'_0$ on $S'$. Since $y_1,y_2\in P_0$, the triple line $\ell_0'$ contains the 
projections $s_1',s_2'$ of the points $s_1,s_2$. By assertion (iii), the points  $s_0,s_1,s_2$ span a plane $\Pi_0$ in  $\bbP^4$. Since $\Pi_0$ does not contain the image $\frako'$ it is projected onto a plane $\Pi_0'\subset \bbP^3$ and this plane is spanned by the images $s_0'$, $s_1'$, and $s_2'$ of the three points $s_0,$ $ s_1$, and $s_2$. The triple line $\ell_0'=\phi'(P_0)$ is spanned by $s_1'$ and $s_2'$. 

The image of the pencil $|F_1+F_2|$ on $S'$ is cut out by the pencil $\calQ$ of planes containing $\ell'_0$. The plane $\Pi_0'$  is a member of $\calQ$ and cuts out in $S'$ a curve of degree 7: This curve is equal to the union of the three lines $\ell_1'$, $\ell_2'$, and $\ell'_0$, where the first two enter with multiplicity 2 and the last one with multiplicity 3. Any other plane from $\calQ$ cuts out in $S'$ the line $\ell'_0$ taken with multiplicity 3 and a quartic 
curve $P'= \phi'(P)$ for some $P\in |F_1+F_2|$. Let $N$ be the closure of the set of double points of irreducible plane quartics cut out by $\calQ$ in $S'$. It is a double conic $K$ on $S'$ (its pre-image on $Y$ intersects $P$ at two points, hence intersects $2F_1+2F_2$ with multiplicity 4). It passes through a point on $\ell'_0$, the image of a double point of the plane quartic $\phi(P_0)$.

Let us see what else is in the singular locus of $S'$. Suppose  $Z$ is another irreducible curve in  the singular locus of $S'$ which is not contained in any plane from $\calQ$.  Then a general plane $\Pi'$ from $\calQ$ intersects it, and hence the image of a general $P\in |F_1+F_2|$ in $\Pi'$
 acquires an additional double point. This contradiction  shows that $Z$ is contained in some plane $\Pi'\in \calQ$, certainly, different from $\Pi_0'$. We claim 
that it must be a double conic. The unique alternative possibility is that $Z$ is a double line. Let $F$ be its pre-image in $Y$. It is an irreducible component of some divisor $P$ from $|F_1+F_2|$, with $P=F+R$ for some effective divisor $R$. Since the line $Z$ intersects only one  of the points $s'_1$ and $s'_2$, the curve $F$  passes through only one base point 
of $|F_1+F_2|$, hence we may assume that $F\cdot F_1 = 0$. 
Let $\sigma_{E_1}:Y\to Y_1$ be, as in Section \ref{5.1}, the blow-down of the curve $E_1$. We know that $Y_1$ is a Halphen surface with 
elliptic fibration $|m_1\sigma_{E_1}(F_1)|$, the fibers of which are irreducible. We also know that $F_1 = \sigma_{E_1}^*(-K_{Y_1})$, hence 
$ (\sigma_{E_1})_*(F)\cdot K_{Y_1} = 0$. If  $\sigma_{E_1}(F)$ is not a point,  it must be a fiber of the elliptic fibration $|m_1\sigma_{E_1}(F_1)|$, and it can not be the multiple fiber $\sigma_{E_1}(F_1)$ (because $Z\neq \ell_1'$). Since 
$F\cdot F_2 \leq   1$, this contradicts $m_1>1$, and shows that $\sigma_{E_1}(F)$ is a point, which means that $F$ is the curve $E_1$. In particular, 
$F\cdot F_2=1$, $F_1\cdot R=1$, and $F_2\cdot R=0$, because the self-intersection of $P$ is $2$.  The same reasoning, applied to $\sigma_{E_2}$, implies that $R$ is equal to $E_2$, and provides a contradiction because $E_1+E_2$ is not a member of $|F_1+F_2|$.

So, we have now computed  the singular locus of $S'$. It consists of two double lines $\ell_1',\ell_2'$, one triple line $\ell_0'$, the double conic $K$, and some number $c$ of double conics contained in planes from the pencil $\calQ$. 

A general plane section of $S'$ is a plane curve of degree 7. It has $2+2(c+1)$ double points and one triple point. 
Its geometric genus is equal to $15-3-2-2(c+1)$ which is an even number. On the other hand, the geometric genus is equal to the genus of a general curve from $|2F_1+2F_2|$ which is equal to 5. This contradiction proves  assertion (iv).

(v)  Since $\phi:Y\to S$ is a  map of degree 2 onto a 
 normal surface, the existence of the Stein decomposition of $\phi$ implies that the pre-image of any point consists of at most 
 two points or contains a one-dimensional component. 
 
We want to show that the pre-image of $\{s_0\}$ is reduced to $\{ y_0\}$. It contains the union of  the pre-images of $s_0$ under the restriction maps $F_i\to \ell_i$. Since each such map is of degree $2$ and  $\phi$ does not blow down any curve intersecting $F_1+F_2$, we may assume that $y_0$ is a ramification point of  $\phi_{\vert F_1}\colon F_1\to \ell_1$ and the pre-image of $F_2\to \ell_2$ consists of two points $y_0$ and $y_0'$. 

Let $\nu:\bar{S}\to S$ be the blow-up of the point $s_0$. Suppose $s_0$ is a nonsingular point of $S$. Then the exceptional curve $E$ of  $\nu$ is a $(-1)$-curve on $\bar{S}$. The rational map $\nu^{-1}\circ \phi:S\dasharrow \bar{S}$ extends to a regular map $\bar{\phi}:\bar{Y} \to \bar{S}$ with disconnected  exceptional locus over $y_0$ and $y_0'$. However, the restriction of $\bar{\phi}$ over $E$ is ramified at the point of intersection of $E$ with the proper transform of $\ell_1$, and hence $\bar{\phi}^{-1}(E)$ can not be disconnected. This contradiction shows that $s_0$ is a singular point of $S$, hence $\phi$ is not \'etale at $y_0$, hence $s_0$ is a ramification point of $\phi$ and, as such, has a
  unique pre-image.
\end{proof}
 
 Now we can easily deduce from this lemma that $m_1 = m_2 = 2$. We know that the maps $F_i\to \ell_i$ ramify over $s_0$.  Each map is given 
 by the restriction of the linear system $|2F_1+2F_2 | $. It is equal to the linear system $|2y_0+2\epsilon_i|$, where $\calO_{F_i}(\epsilon_i)\cong \calO_{F_i}(F_i)$ and $m_i$ is the order of $\epsilon_i$ in $\Pic^0(F_i)$. Since $2y_0\in |2y_0+2\epsilon_i|$, we obtain $2\epsilon_i \sim 0$, hence $m_i \leq 2$. Since all indices $m_i$ are larger than one, we get $m_i=2$ for all $1\leq i \leq 10$. 
  
\subsection{The surface $Y$ is a Coble surface}

The surface $Y$ is obtained from $Y_i$ by blowing up a point ${\tilde{p_i}}$; this point is contained in a non-multiple member $B_i$ of the Halphen pencil of $Y_i$. Let $a_i$ be the multiplicity of ${\tilde{p_i}}$  on $B_i$: It is equal to $1$ if ${\tilde{p_i}}$ is a nonsingular point of $B_i$ and $2$ otherwise.  Let ${A}_i$ be the strict transform of $B_i$ on $Y$. Since $m_i=2$, we have 
\beq\label{c2}
{A}_i+a_iE_i \sim 2f_i = -2\sigma_{E_i}^*(K_{Y_i}) = -2K_Y+2E_i.
\eeq 
If $a_i = 2$ for some index, then ${A}_i\in |-2K_Y|$, hence $Y$ is a Coble surface, all $A_i$ coincide, and their natural projection  on $\bbP^2$ is a sextic curve with double points at $p_1$, $p_2$, $\ldots$, $p_{10}$. 

Suppose $a_i =  1$ for all $i$. Then
\beq\label{q}
{A}_i\sim -2K_Y+E_i.
\eeq
 Let $F_i\in |f_i| = |-K_Y+E_i|$ be the pre-image of the reduced double fiber of the Halphen pencil on $Y_i$. Since $\sigma_{E_i}$ is an isomorphism over an open neighborhood of this fiber, the normal bundle $\calO_{F_i}(F_i)$ is of order 2 in $\Pic(F_i)$. Applying \eqref{q}, we obtain 
 \[
 \calO_{F_i}({A}_j-E_j) \cong \calO_{F_i}(-2K_Y) \cong \calO_{F_i}.
 \]
For $j \ne i$, $A_j\cdot F_i = E_j\cdot F_i= 1$, hence  ${A}_j$ and $E_j$ intersect $F_i$ transversally at the same point. 
Let $\pi:Y\to \bbP^2$ be the natural projection, i.e. the blow-down of all curves $E_l$, $1\leq l \leq 10$. 
The image of $F_i$ is an irreducible cubic curve $C_i$.  
By blowing down $E_j$,  we see that $\pi(A_j)$ is tangent to $C_i$ at $p_j$. 
Taking $k\ne i,j$, we obtain that $\pi(A_j)$ is tangent at the same point to $C_k$; 
since $A_j$ intersects $E_j$ transversely, $C_k$ and $C_i$ are tangent at $p_j$.  
Fixing $i$ and $k$ and changing $j$, we see that $C_k$ and $C_i$ are tangent 
at the $8$ points $p_j$ with $j\neq k$, so that the two irreducible cubics $C_i$ and $C_k$ coincide. Hence $F_i=F_k$, while $f_i\neq f_k$: This contradiction shows that
$Y$ is a Coble surface. 
 
Thus we have proved Theorem \ref{thm:Coble-if-not-cubic} when $\calP$ contains
exactly $10$ points. 

\subsection{The surface $X$ coincides with the Coble surface $Y$}\label{par:X=Y}
To conclude the proof of Theorem \ref{thm:Coble-if-not-cubic}, it remains to show that 
$n  = 10$, and thus $X = Y$ is a Coble surface, when $\calP$ is a Cremona special set 
 not contained in a cubic curve.
We may assume  that $X = X_{11}$, so that $X$ is obtained by blowing up $11$ points $p_1,\ldots, p_{11}$ in $\bbP^2$; 
by Lemma \ref{lem:No-Inf-Near} none of them is an infinitely near point.   

By assumption, $\calP$ is not contained in a cubic curve. Let $\calP_j$, $1\leq j\leq 11$, be the subset of $\calP$ 
obtained by removing the point $p_j$. Assume that 
three of the sets $\calP_j$, say $\calP_9$, $\calP_{10}$ and $\calP_{11}$  are contained in 
cubic curves, say $C_9$, $C_{10}$ and $C_{11}$. By Section \ref{par:Alternative}, these
three cubics are irreducible. Let $\calP'=\{p_1, \ldots, p_8\}$.  
 Since $\calP'$   lies on an irreducible cubic curve, no four of its points are collinear.  
 It follows that the linear system of cubic curves containing $\calP'$ is of dimension $1$; let $q$ be the ninth base point of this pencil.  The curves $C_9$, $C_{10}$, and $C_{11}$ belong to this pencil, hence any two of them intersect at $q$. Consequently 
$C_9$ and $C_{10}$ contain the set $\calP'$ and the points $q$ and $p_{11}$. It follows that $C_9=C_{10}$ or that $q=p_{11}$.  
If $C_9=C_{10}$, then $\calP$ is contained in this cubic curve,  if $q=p_{11}$, then $C_{11}$ contains $\calP$; in both cases, $\calP$
is contained in a cubic curve, a contradiction.

Thus, at most $2$ of the $\calP_j$ are contained in cubic curves. We can therefore suppose that $\calP_1$, $\calP_2$, 
$\calP_3$ and $\calP_4$ are not contained in cubic curves. The surfaces obtained by blowing up these sets are Cremona special
and, as such, are unnodal Coble surfaces. In particular, each set $\calP_l$, $1\leq l\leq 4$, determines a unique curve
of degree six with nodes along $\calP_l$. 

Let us first assume that the characteristic of the field $\bbK$ differs from $2$.
Consider the Del Pezzo surface $Z$ of degree 2 obtained by blowing up the last seven points $p_5,\ldots,p_{11}$. We identify the elements of the set  
\[
\calQ = \{p_1,p_2,p_3,p_4\} 
\] 
with points on $Z$. By assumption, we have $4$ curves $C_i$ on $Z$ in the linear system $|-2K_Z|$ with double points at $\calQ\setminus \{p_i\}$; each $C_i$ is the proper transform of the
sextic curve with nodes along $\calP_i$, $1\leq i \leq 4$. 
Consider the map $Z\to \bbP^2$ defined by the linear system $|-K_Z|$. Since  $\cha(\bbK)\ne 2$,  its branch curve is a plane quartic curve $B$ (see \cite{LNM777}, Chapter V.6, page 67). Each curve $C_i$ is equal to the pre-image of a conic $K_i$ in the plane. Since $C_i$ is singular at three points $p_j\in \calQ\setminus \{p_i\}$, the conic $K_i$ is tangent to $B$ at the images $q_j$ of the points $p_j$. In particular, the conics $K_i$ and $K_j$ are tangent to $B$ at  two points.  Consider the cubic curves $K_i+L_i$, where $L_i$ is the tangent line to $B$ at $q_i$. They all pass through $q_1$, $q_2$, $q_3$, and $q_4$ with tangent direction $L_i$ at  $q_i$. Thus they generate a pencil of cubic curves with $8$ base points (four are infinitely near  the points $q_i$). The ninth base point must be the intersection point $q$ of the lines $L_i$. But then three tangents of the conic $K_i$ meet at $q$. This can  happen only if $\cha(\bbK) = 2$, so that $n=10$ and $X=Y$ when 
$\cha(\bbK)\neq 2$.

\begin{remark}
The configuration of $4$ conics with each pair tangent at two points is realized in characteristic $2$. Consider the  3-dimensional linear system  of conics $ax^2+by^2+cz^2+dxy = 0$. Each line through the point $(0:0:1)$ is tangent to all conics in the family. Choose four general points  in the plane. For each subset of three of these points find a unique conic in the family which passes through these points. Then each pair of the four conics are tangent at two points. 

Note that the pencil of cubic curves which we used in this proof defines a quasi-elliptic fibration on the blow-up of the base points. It has $8$ reducible fibers of type III in Kodaira's notation (two smooth rational curves tangent to each other at one point). There are no elliptic fibrations on a rational surface with such a combination of reducible fibers.
\end{remark}

It remains to consider the case when $\cha(\bbK) = 2$. The difference here is that the anti-canonical linear system $|-K_Z|$ defines a separable map of degree 2 whose branch curve is a conic and the condition that the pre-image of a conic is singular is not stated in terms of the tangency to the branch locus. So we have to find another argument. 

Since at most $2$ of the $\calP_j$ are contained in cubic curves, we can assume that $\calP_i$ are not contained in cubic curves for $1\leq i\leq 9$; each of these nine point set is special, and is therefore
a Coble set. Let $R_i$ be the proper transform of the sextic curve with double points at $\calP_i,$ $i = 1,\ldots,9$. The curves $R_i$ are pairwise disjoint $(-4)$-curves and the divisor class of each $R_i$ is divisible by $2$. Let $D$ be the divisor $\sum R_i$, where $i$ runs from $1$ to $9$, and let $\calL$ be an invertible sheaf on $X$ such that $\calL^{\otimes 2} \cong \calO_X(D)$. This sheaf defines an  inseparable double cover  $\pi:Z \to X$  with branch divisor $D$. Recall that this means that $Z$ is  locally isomorphic to $\Spec~\calO_X(U)[T]/(T^2+\phi)$, where $U$ is an affine open set,   $\phi$ is a local equation of $D$ in $U$, and $\pi_*(\calO_X) \cong  \calO_X\oplus \calL^{-1}$.  Since $D$ is a reduced divisor, the cover has only finitely many singularities. The set of singularities supports the  scheme of zeros of a section of $\Omega_X^1\otimes \calL^{\otimes 2}$ (see \cite{Dolgachev-Cossec:Book}, Proposition 0.1.2, \footnote{This is analogous to the formula for
 the number of singular points of a holomorphic foliation on a complex  surface $X$ defined by a section of 
 $\Omega_X^1\otimes \calL^{\otimes 2}$}). The length of this $0$-dimensional subscheme is equal to the second Chern class of the rank 2 locally free sheaf $\calE =\Omega_X^1\otimes \calL^{\otimes 2}$. The standard formula from the theory of Chern classes gives
$$c_2(\calE) = c_2(\Omega_X^1)+c_1(\Omega_X^1)\cdot D+D^2.$$
In our situation, we have $c_2(\Omega_X^1) = 14$ (= the l-adic Euler characteristic of $X$) and $c_1(\Omega_X^1) = K_X$ . 
This gives
$$c_2(\calE) = 14+K_X\cdot D+D^2 = 14+2\times 9-4\times 9=-4 < 0,$$
a contradiction.

Thus, Theorem \ref{thm:Coble-if-not-cubic} is proved in all cases, including $\cha(\bbK)=2$.

\section{Cremona special sets of $n\ge 10$ points lying on a cubic curve}\label{section:cubic}

To establish our Main Theorem, it remains to prove Theorem \ref{thm:Harbourne-if-cubic}: Assuming that $\calP$ 
is Cremona special and lies on an irreducible  cubic curve,  we now prove that the set $\calP$ is one of Harbourne's sets in this case. 
Note that, being   Cremona special, $\calP$ does not contain infinitely near point.

\subsection{Torsion sets of points}\label{par:Harbourne}




\begin{lemma}\label{lem:Torsion-if-Special}
If $\calP$ is a Cremona special set of points on a cubic curve $C_0$, then 
\begin{enumerate}
\item $C_0$ is irreducible;
\item  there exists a positive integer $s$ such that the divisor class of $sp_i$ in $\Pic(C_0)$ 
 does not depend on the choice of $p_i$ in $\calP$; 
\item $9sp_i\in |3s\frakh|$ for all $p_i\in \calP$, 
where $\frakh = c_1(\calO_{C_0}(1))$, the divisor class of the intersection of $C_0$ with a line.
\end{enumerate}
\end{lemma}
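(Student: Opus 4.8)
The plan is to treat (1) first and then reduce (2)--(3) to the single statement that the restriction map to $C_0$ has finite image. By Lemma \ref{lem:No-Inf-Near}, $\calP$ consists of $n\ge 10$ distinct points with no infinitely near point, and by Proposition \ref{P3} the set of effective real roots of $K_X^\perp$ is empty. Since $e_0-e_i-e_j-e_k$ and $2e_0-e_{i_1}-\cdots-e_{i_6}$ are real roots which become effective precisely when three of the points are collinear, respectively six lie on a conic, no such configuration occurs. A reducible cubic is a union of lines and at most one irreducible conic, hence carries at most $2+2+2=6$ or $2+5=7$ of the points; as $n\ge 10$ this is impossible, so $C_0$ is irreducible. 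The same bound shows that no reducible cubic contains $\calP$, so by B\'ezout two distinct cubics through $\calP$ would meet in only nine points; thus $C_0$ is the unique cubic through $\calP$ and $|-K_X|=\{C\}$, where $C$ is the strict transform of $C_0$.

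Next I would set up the restriction homomorphism. No $p_i$ can be the singular point of $C_0$: otherwise, blowing up that point together with eight of the remaining smooth points yields, by Theorem \ref{halthm}, an unnodal Halphen surface on which the strict transform of $C_0$ is a smooth rational $(-3)$-curve, contradicting Lemma \ref{-ncurves}(1). Hence $C\cong C_0$, and restriction to $C$ gives $\frakr\colon\Pic(X)\to\Pic(C)$ with $\frakr(e_0)=\frakh$ and $\frakr(e_i)=p_i$, carrying $K_X^\perp$ into $\Pic^0(C)$. As $C$ is the unique anticanonical curve, $\Aut(X)$ preserves it and $\frakr(g^*D)=(g|_C)^*\frakr(D)$ for all $g\in\Aut(X)$.

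The key step is to prove that $\Lambda:=\frakr(K_X^\perp)$ is finite; then (2) and (3) follow at once. The kernel $N$ of $\frakr|_{K_X^\perp}$ is $\Aut(X)^*$-invariant by equivariance. If $\Lambda$ were infinite, then $N\otimes\bbQ$ would be a proper invariant subspace, which is impossible because for $n\ge 10$ the image of $\Aut(X)^*$ is Zariski dense in the orthogonal group (as in the proof of Proposition \ref{P3}) and hence acts irreducibly; so $\frakr$ would be injective. But $\Aut(X)^*$ is infinite and faithful on $K_X^\perp$ (it fixes $K_X$ and $K_X^2\neq 0$), while it also acts on $\Lambda\cong K_X^\perp$ through the group of algebraic automorphisms of $\Pic^0(C)$. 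For $C$ smooth or nodal this group is finite, so a finite-index subgroup would act trivially on $K_X^\perp$, a contradiction; for $C$ cuspidal it is $\bbG_m$ acting by scaling, and equivariance together with injectivity of $\frakr$ forces each element of $\Aut(X)^*$ to act on $K_X^\perp$ as a scalar, necessarily $\pm\id$ since it preserves the lattice, again contradicting infiniteness. Therefore $\Lambda$ is finite.

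Finally, let $s$ be the exponent of the finite group $\Lambda$, so that $s\,\frakr(D)=0$ for every $D\in K_X^\perp$. Taking $D=e_i-e_j$ gives $s(p_i-p_j)\sim 0$, so that $sp_i$ is independent of $i$ in $\Pic(C_0)$, which is (2); taking $D=\alpha_0=e_0-e_1-e_2-e_3$ gives $s(\frakh-p_1-p_2-p_3)\sim 0$, and combined with (2) this yields $3sp_i\sim s\frakh$, hence $9sp_i\sim 3s\frakh$, which is (3). I expect the main obstacle to be the cuspidal case in the third paragraph, where $\Pic^0(C)$ has a positive-dimensional automorphism group; the scalar-plus-integrality argument is what closes that gap, and it is exactly there that the irreducibility of the $W_X$-representation for $n\ge 10$ is indispensable.
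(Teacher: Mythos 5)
Your route is genuinely different from the paper's. The paper argues entirely through the nine-point subsets: each $\calP_i=\calP\setminus\{p_i\}$ is Cremona special, hence by Theorem \ref{halthm} an unnodal Halphen set of some index $m_i$; this at once forces $C_0$ to be irreducible and yields the explicit relation $m_i\sum_{j\ne i}p_j\sim 3m_i\frakh$ on $C_0$, after which $s=\mathrm{lcm}(m_i)$ and a two-line manipulation give (2) and (3). You instead establish directly that the image $\Lambda$ of the restriction map $\frakr$ is finite, using $\Aut(X)^*$-equivariance of $\frakr$ and irreducibility of the representation for $n\ge 10$; that is a more structural argument (it proves the Corollary following the lemma in one stroke), at the price of a case analysis on $\Pic^0(C)$, whereas the paper's argument is elementary, constructive and characteristic-free. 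Your treatment of (1), of the exclusion of the singular point of $C_0$, and the derivation of (2)--(3) from finiteness of $\Lambda$ are all correct.

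There is, however, one step that fails as written: in the cuspidal case you assert that equivariance plus injectivity of $\frakr$ ``forces each element of $\Aut(X)^*$ to act on $K_X^\perp$ as a scalar, necessarily $\pm\id$''. This is not true. If $\frakr$ is injective, then $g^*$ is conjugate, via $\frakr$, to multiplication by $\lambda_g\in\bbK^*$ on the rank-$10$ subgroup $\Lambda\subset(\bbK,+)$; but multiplication by $\lambda_g$ on such a lattice is a scalar endomorphism only when $\lambda_g\in\bbQ$. In general $\lambda_g$ is an algebraic number whose minimal polynomial equals that of $g^*$ on $K_X^\perp\otimes\bbQ$, and nothing you have said prevents this from being, say, an irreducible polynomial of degree $10$ with a hyperbolic isometry behind it. The contradiction is nevertheless available by a different twist: $g\mapsto\lambda_g$ is a group homomorphism $\Aut(X)^*\to\bbK^*$, and it is injective (if $\lambda_g=1$ then $g^*=\id$ on $K_X^\perp$ by injectivity of $\frakr$, hence $g=\id$ by the faithfulness you already established), so $\Aut(X)^*$ would be abelian; this is impossible for a finite-index subgroup of $W_n$, $n\ge 10$, which contains nonabelian free subgroups. (Note also that when $\cha(\bbK)=p>0$ the cuspidal case never reaches this point, since $pK_X^\perp\subset\ker\frakr$ forces $\Lambda$ to be finite outright.) With that repair your argument is complete.
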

 
\begin{proof}[Proof of Lemma \ref{lem:Torsion-if-Special}]
We first assume that the number of points in $\calP$ is equal to $10$.
For all indices $i\in \{1, \ldots, 10\}$, denote by $\calP_i$ the set $\calP\setminus \{p_i\}$. 
Since $\calP$ is Cremona special, so is $\calP_i$. By Theorem \ref{halthm}, there exists a positive 
integer $m_i$ such that $\calP_i$ is a Halphen set of index $m_i$. Since $\calP_i$ is contained
in $C_0$, this implies that $C_0$ is irreducible (otherwise the Halphen surface is not unnodal, hence $\calP_i$ is not  Cremona special), and that
\[
m_i(\sum_{j=1, j\neq i}^{10} p_j)\sim  3m_i\frakh,
\]
where $\frakh$ is the divisor obtained by intersecting $C_0$ with a line.
  Let $s$ be the least common multiple of the $m_i$, and let
$\Sigma$ be the sum of the $10$ points $p_i$. We obtain
\[
s(\Sigma-p_i) \sim 3s\frakh. 
\] 
Hence the divisor class of $sp_i$ does not depend on $p_i$. Summing up these equalities, we get
$
9s\Sigma \sim 30s\frakh
$,
and thus
$
9sp_i\sim 3s\frakh.
$

If there are more than ten points in $\calP$, we consider all subsets of ten points within $\calP$. Properties
(1), (2), and (3) hold for these subsets, and therefore also for $\calP$ (for some positive integer $s$).
\end{proof}

The following definition is due to Harbourne \cite{Harbourne2}.

\begin{definition} A rational surface $X$ with Picard number $\rho(X) \ge 11$ and with $|-K_X| = \{D\}$ for some irreducible reduced curve $D$ is  
{\bf K3-like} if the canonical restriction homomorphism 
\[
\frakr \colon K_X^\perp\to \Pic^0(D) 
\]
has finite image.
\end{definition}

We say that a point set $\calP$ which is contained in  an irreducible reduced cubic curve $C_0$ is a {\bf torsion set} if the blow-up surface $X$ is K3-like.
In that case, the strict transform $C$ of $C_0$ coincides with the unique member $D$ of the linear system $|-K_X|$. 

When $\calP\subset C_0$ is a torsion set, we denote by $X$ the surface obtained by blowing up $\calP$, and
 by  $S_\frakr$  the kernel of the restriction homomorphism $\frakr\colon K_X^\perp\to \Pic^0(C)$. The quotient group $K_X^\perp/S_\frakr$ embeds as a 
finite subgroup in $\Pic^0(C)$; we denote by $m > 1$ the smallest integer such that  $K_X^\perp/S_\frakr$ is contained in the $m$-torsion subgroup 
$\Pic^0(C_0)[m]$. 

\begin{corollary} A Cremona special set of $n\ge 10$ points on an irreducible  cubic curve $C_0$ is a torsion set.
\end{corollary}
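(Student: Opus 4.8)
The plan is to verify directly the three conditions in the definition of K3-like, with the finiteness of $\frakr$ extracted from Lemma~\ref{lem:Torsion-if-Special}. Since $\calP$ is Cremona special with $n\ge 10$, the Picard number is $\rho(X)=n+1\ge 11$, and by Lemma~\ref{lem:No-Inf-Near} the set $\calP$ consists of $n$ distinct, not infinitely near, points. First I would check that the strict transform $C$ of $C_0$ is an irreducible reduced curve and is the unique member of $|-K_X|$. The genus formula gives $p_a(C)=1+\tfrac12\bigl(C^2+C\cdot K_X\bigr)=1$, the arithmetic genus of the irreducible plane cubic $C_0$; since no point of $\calP$ is infinitely near, comparing arithmetic genera forces each $p_i$ to be a smooth point of $C_0$ and $\pi|_C\colon C\to C_0$ to be an isomorphism, so $C$ is irreducible and reduced. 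Because $C^2=K_X^2=9-n<0$, an irreducible curve cannot move in its linear system, hence $|-K_X|=\{C\}$.

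Next I would compute the restriction homomorphism $\frakr\colon K_X^\perp\to\Pic^0(C)$ on the generators $\alpha_0,\alpha_1,\ldots,\alpha_{n-1}$ of $K_X^\perp$ from \eqref{basis}. Identifying $C$ with $C_0$, the restrictions of the geometric basis are $\frakr(e_0)=\frakh$ and $\frakr(e_i)=[p_i]$ for $i\ge 1$, so that
\[
\frakr(\alpha_i)=[p_i-p_{i+1}]\ \ (i\ge 1),\qquad \frakr(\alpha_0)=[\frakh-p_1-p_2-p_3].
\]

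The key step is to show that each of these images is a torsion element of $\Pic^0(C_0)$, of order dividing $9s$, where $s$ is the integer furnished by Lemma~\ref{lem:Torsion-if-Special}. Part~(2) of that lemma gives $s[p_i-p_{i+1}]=0$, so $\frakr(\alpha_i)$ is $s$-torsion for $i\ge 1$. For $\alpha_0$, part~(3) gives $9sp_j\sim 3s\frakh$, i.e.\ each class $\frakh-3p_j$ is $3s$-torsion; hence $3\,\frakr(\alpha_0)=(\frakh-3p_1)+(\frakh-3p_2)+(\frakh-3p_3)$ is $3s$-torsion and $\frakr(\alpha_0)$ is $9s$-torsion. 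Since $K_X^\perp$ is finitely generated, its image $\frakr(K_X^\perp)$ is an abelian group generated by the finitely many torsion elements $\frakr(\alpha_i)$; any such group is a quotient of a finite product of cyclic groups, hence finite. This shows $X$ is K3-like and $\calP$ is a torsion set.

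I expect the only genuinely delicate point to be this last finiteness assertion, precisely because $\Pic^0(C_0)$ need not have finite torsion: when $C_0$ is cuspidal and $\cha(\bbK)=p>0$ one has $\Pic^0(C_0)\cong\bbG_a$, whose $p$-torsion is all of $\Pic^0(C_0)$. The point to emphasize is that we never invoke finiteness of the torsion subgroup of the ambient group; we use only that a finitely generated abelian group all of whose generators have finite order is finite, which holds in every characteristic.
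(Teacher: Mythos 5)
Your proof is correct and follows essentially the same route as the paper: both compute the restriction homomorphism on a generating set of $K_X^\perp$, use parts (2) and (3) of Lemma \ref{lem:Torsion-if-Special} to see that each generator maps to a torsion class (the paper gets $3s$-torsion for $\alpha_0$ by also using that $sp_i$ is independent of $i$, versus your $9s$, which is immaterial), and conclude that a finitely generated abelian group with torsion generators is finite. Your explicit remark that one must not appeal to finiteness of the torsion subgroup of $\Pic^0(C_0)$ (which fails for a cuspidal cubic in positive characteristic) is exactly the right point of care, and is the same mechanism the paper relies on.
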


\begin{proof} The image of the restriction homomorphism $\frakr\colon K_X^\perp\to \Pic^0(C_0)$ is generated by the divisor classes 
$\frakr(e_0-e_1-e_2-e_3)$, and  $\frakr(e_i-e_j)$ for $i>j$. From Lemma \ref{lem:Torsion-if-Special}, we have 
\[
3s\frakr(e_0-e_1-e_2-e_3) = \frakr(3s\frakh-9sp_1) = 0,\  s\frakr(e_i-e_j) = \frakr(sp_i-sp_j) = 0.
\]
This shows that the image of the restriction homomorphism is finitely generated and is contained in the  
$3s$-torsion subgroup $\Pic(C_0)[3s]$ of $\Pic^0(D)$.
As such, $\frakr(K_X^\perp)$ is finite. \end{proof}

\begin{proposition}\label{P6.4} Let  $\calP$ be a torsion set of $n\ge 10$ points on an irreducible cubic curve $C_0$. 
Then $\calP$ is  Cremona special if and only if it is unnodal.
\end{proposition}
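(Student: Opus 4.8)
The forward implication is free: by Proposition \ref{P3} every Cremona special surface is unnodal, so it suffices to prove the converse. From now on I assume $\calP$ is a torsion set of $n\geq 10$ points on the irreducible cubic $C_0$ and that the blow-up $X$ is unnodal, and the goal is to show that $\Aut(X)^*$ has finite index in $W_X$. Throughout I will lean on Lemma \ref{-2}, which applies here since $C\in|-K_X|$ is irreducible and $X$ has no $(-2)$-curves: every $w\in W_X$ sends effective classes to effective classes, sends classes of $(-1)$-curves to classes of $(-1)$-curves, and permutes the primitive isotropic effective classes.

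The first step is purely group-theoretic and is the clean part of the argument. Set $T=\Pic^0(C_0)$ and let
\[
G=\{w\in W_X:\ \frakr\circ w=\frakr \text{ on } K_X^\perp\},
\]
the stabilizer of $\frakr$ for the action $w\cdot\psi=\psi\circ w^{-1}$ of $W_X$ on $\Hom(K_X^\perp,T)$. The plan is to show that $G$ has finite index. Because $\calP$ is a torsion set, the image $\frakr(K_X^\perp)$ is a \emph{finite} subgroup of $T$ (this is exactly the K3-like hypothesis, and it forces $S_\frakr=\ker\frakr$ to have finite index in $K_X^\perp$). For every $w\in W_X$ the homomorphism $\frakr\circ w$ has image $\frakr(w(K_X^\perp))=\frakr(K_X^\perp)$, so the whole $W_X$-orbit of $\frakr$ lies inside $\Hom(K_X^\perp,\frakr(K_X^\perp))$, a finite set since $K_X^\perp$ is free of finite rank and $\frakr(K_X^\perp)$ is finite. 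By orbit–stabilizer, $G$ has finite index in $W_X$.

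The second step is realization, and this is where I would do the real work. It is enough to produce automorphisms of $X$ whose images in $W_X$ generate a finite-index subgroup of $G$; together with the first step this gives $[W_X:\Aut(X)^*]<\infty$. Since $|-K_X|=\{C\}$, every $g\in\Aut(X)$ preserves $C$ and restricts to an automorphism of $C$, and those $g$ whose restriction is a translation act trivially on $\Pic^0(C)$, so that $g^*\in G$. Conversely, I would realize a finite-index subgroup of $G$ by such ``translation'' automorphisms, exactly mimicking the two model computations already carried out: for a primitive isotropic class $f\in K_X^\perp$, Lemma \ref{-2} provides an irreducible genus one representative, Gizatullin's Theorem \ref{Giz1} attaches to it a genus one fibration, and the Mordell–Weil/Jacobian action along the fibers yields automorphisms precisely as in the proof of Theorem \ref{Halphen-CS}. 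Running this over a generating family of isotropic classes and conjugating by $W_X$ in the style of the proof of Theorem \ref{autocoble} should exhaust a finite-index subgroup of $G$ inside $\Aut(X)^*$.

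I expect the realization step to be the main obstacle. What must be shown is that an isometry $w\in G$ — which by Lemma \ref{-2} already preserves the effective cone and maps $(-1)$-classes to $(-1)$-curve classes — is genuinely induced by an automorphism of $X$; this is a Torelli-type statement for anticanonical rational surfaces, and the delicate point is to make the construction uniform in all characteristics and across the three types of irreducible cubic, where $T$ is respectively an elliptic curve, $\bbG_m$, or $\bbG_a$. The cuspidal case with $T=\bbG_a$ is the one where torsion is possible only when $\cha(\bbK)>0$, which is consistent with the phenomenon that the genuinely new examples live in positive characteristic.
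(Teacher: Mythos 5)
Your forward implication and your first (finiteness) step are correct and match the paper: Cremona special implies unnodal by Proposition \ref{P3}, and since $\frakr(K_X^\perp)$ is finite the $W_X$-orbit of $\frakr$ inside $\Hom(K_X^\perp,\Pic^0(C))$ is finite, so the stabilizer has finite index. (One small point you should still check: $\frakr\circ w=\frakr$ on $K_X^\perp$ only gives $\frakr(w(e_i)-e_i)=t$ for a common $t$ with $(n-9)t=0$, so for $n>10$ you must pass to a further finite-index subgroup to kill $t$; this is harmless but not automatic.)

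The genuine gap is the realization step, which you explicitly leave as a plan. The paper does not prove a Torelli-type statement, nor does it build the automorphisms out of translations along genus-one fibrations. It invokes Harbourne's Theorem 3.2 and sketches the mechanism via the \emph{Cremona action} of $W_n$ on point sets (Section \ref{1.5} and \cite{Dolgachev-Ortland:Ast}, \S VI): for $w\in W_X$, the unique $(-1)$-curves representing $w(e_1),\dots,w(e_n)$ (Lemma \ref{-2}(i)) meet $C$ in a new point set $\{p_1(w),\dots,p_n(w)\}\subset C_0$, this coincides with the Cremona action of $w$ on $\calP$, and finiteness of $\frakr(K_X^\perp)$ forces a finite-index subgroup $G$ to fix $\calP$ up to projective equivalence; a fixed point of the Cremona action is then automatically realized by automorphisms. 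Your substitute — translations along the pencils $|kf|$ plus conjugation as in Theorems \ref{Halphen-CS} and \ref{autocoble} — is not carried out and faces two concrete obstacles you do not resolve: for $n\ge 10$ the surface $X$ is not relatively minimal over any such fibration, so the Mordell--Weil translations live on the Halphen model and lift to $X$ only if they fix the extra blown-up points (this is exactly where the torsion hypothesis must enter, and it requires an argument); and even granting the lifts, you would still have to show that the resulting parabolic subgroups generate a finite-index subgroup of $W_n$ for $n\ge 10$, which is a nontrivial group-theoretic claim you do not address. As written, the converse direction is a program, not a proof.
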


\begin{proof} By Proposition \ref{P3}, the condition that  $\calP$ is unnodal  is necessary. By Theorem 3.2 from \cite{Harbourne2}, it is also sufficient.  

Let us sketch Harbourne's argument. Assume that $\calP$ is unnodal. 
Since $\calP$ is a torsion set, the surface $X$ is K3-like: The kernel 
 $S_\frakr $ or $\frakr$ has finite index in $K_X^\perp$. By definition of $m$, $mK_X^\perp\subset S_\frakr$.
Let $w$ be an element of $W_X$. 
Let $E_i\subset X$ be the exceptional divisor obtained by blowing up  $p_i$. The image of the class $e_i$ of $E_i$ by $w$
is represented by a unique $(-1)$-curve $E_i(w)$ (Lemma \ref{-2}, assertion (i)). This curve intersects $C$ in a unique point $p_i(w)$
because $C\in \vert -K_X\vert$. Thus, $w$ transforms the point set $\calP$ into a new point set $\{p_1(w), \ldots, p_n(w)\}$
of the curve $C_0$. This action is the same as the action of $W_n$ on point set, described in Section \ref{1.5} (see also
\cite{Dolgachev-Ortland:Ast}, \S VI.5). 
On the other hand,  the image of $\frakr$ being finite, there exists a finite index subgroup $G$ of $W_X$ such that
$\{p_1(w), \ldots, p_n(w)\}$ is projectively equivalent to $\calP$ for all $w$ in $G$. From Section \ref{1.5} (see \cite{Dolgachev-Ortland:Ast}, \S VI),
 this implies that $G$ is realized as a subgroup of $\Aut(X)^*$; thus, $X$ is special.
\end{proof}


 \begin{example}[Harbourne's examples, see Example 3.4 from \cite{Harbourne2}] Suppose that $\cha(\bbK) = p > 0$ and $C_0$ is an irreducible cuspidal curve. The group $\Pic^0(C_0)$ is isomorphic to the abelian group $(\bbK, +)$. 
 
 Let $C_0^\#$ be the complement of the singular point. 
 Choose a point set $\calP= \{q_1,\ldots,q_n\}$ of $n\ge 10$ points on $C_0^\#$, denote by $X$ the blow-up of $\calP$ and 
consider the restriction homomorphism 
 \[
 \frakr: K_X^\perp \to \Pic^0(C_0) \cong \bbK.
 \]
Its kernel $S_\frakr$ is equal to $pK_X^\perp$ when the elements $3\frakh-q_1-q_2-q_3$, $q_1-q_2$, ..., and $q_{n-1}-q_n$ of $\Pic^0(C_0)$ are linearly independent
over $\bbF_p\subset \bbK$;  general point set with $n\geq 10$ points satisfy this property. 

Suppose, now, that $S_\frakr = pK_X^\perp$.
Since the divisor class of any $(-2)$-curve lies in the kernel of $\frakr$ and, obviously, does not belong to $pK_X^\perp$, 
the point set $\calP$ is unnodal.   By Proposition \ref{P6.4}, $\calP$ is Cremona special.
 \end{example}

>From Proposition \ref{P6.4} we deduce that  Theorem \ref{thm:Harbourne-if-cubic} is a consequence of  the following statement. 

\begin{theorem}\label{T6.10} 
Let $C_0\subset \bbP^2$ be an irreducible cubic curve. Let $\calP\subset C_0$ be a torsion set 
with $\vert \calP \vert =10$, and let $m$ be the smallest integer such that the image of the restriction morphism
$\frakr$ is contained in $\Pic^0(C_0)[m]$. 
If $\calP$ is unnodal, then $C_0$ is a cuspidal cubic and  $\cha(\bbK) $ divides $3m$.
\end{theorem}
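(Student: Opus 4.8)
The plan is to show that unnodality forces the restriction map $\frakr$ to be \emph{nonzero on every root}, and then to read off the three cubic types from the structure of $\Pic^0(C_0)$. First I would establish the key reduction: if $X$ is unnodal, then $\frakr(\alpha)\neq 0$ for every root $\alpha\in K_X^\perp$. Since $\vert\calP\vert=10$, every root of $K_X^\perp\cong\bbE_{10}$ is real, i.e.\ lies in the $W_X$-orbit of a simple root (cf.\ \S\ref{par:roots}); hence by Lemma \ref{-2}(ii), applied with $m=1$ to the irreducible anticanonical curve $C\in\vert-K_X\vert$, no root is effective. On the other hand, if a root $\alpha$ satisfied $\frakr(\alpha)=0$, i.e.\ $\calO_C(\alpha)\cong\calO_C$, then the restriction sequence $0\to\calO_X(\alpha+K_X)\to\calO_X(\alpha)\to\calO_C\to 0$ together with Riemann--Roch on $X$ (here $K_X^2=-1$, so $\chi(\calO_X(\alpha))=0$) would force $\alpha$ or $-\alpha$ to be effective: either $h^0(\alpha+K_X)>0$, in which case $\alpha=(\alpha+K_X)+C$ is effective, or $h^0(\alpha+K_X)=0$, in which case the connecting map gives $h^1(\alpha+K_X)=h^1(-\alpha)\geq 1$, and Serre duality yields $h^0(-\alpha)\geq 1$. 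Either way we contradict the absence of effective roots. In particular $\frakr\not\equiv 0$, so $G:=\frakr(K_X^\perp)$ is a nontrivial finite group and $m>1$.

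Next I would run the trichotomy on the cubic. Recall that $\Pic^0(C_0)$ is an elliptic curve $E$, the multiplicative group $\bbG_m$, or the additive group $\bbG_a$, according as $C_0$ is smooth, nodal, or cuspidal. In the cuspidal case $\bbG_a(\bbK)=(\bbK,+)$ is torsion-free when $\cha(\bbK)=0$, so $G$ finite forces $G=0$, contradicting $m>1$; hence $\cha(\bbK)=p>0$, and a nonzero $x\in G\subset\bbG_a[m]$ satisfies $mx=0$ with $x\neq 0$, which is possible only if $p\mid m$. Thus the cuspidal case yields $\cha(\bbK)\mid m$, hence $\cha(\bbK)\mid 3m$, as desired.

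It remains to exclude the smooth and nodal cases, which is where the main work lies. In both cases $G$ is generated by at most two elements: $G\subset E[m]$ is $2$-generated as an abelian group in every characteristic, while $G\subset\mu_m$ is cyclic. I would then prove the lattice statement that any homomorphism from $\bbE_{10}$ onto a $2$-generated finite abelian group has a root in its kernel, contradicting the first paragraph. Since $G$ has exponent $m$, the map $\frakr$ factors through $\bbE_{10}/m\bbE_{10}\cong(\bbZ/m)^{10}$, and the image of $\ker\frakr$ there is a subgroup whose quotient is generated by at most two elements. Because $\bbE_{10}$ is even, unimodular and indefinite of rank $10\geq 3$, Eichler's theorem on the transitivity of the orthogonal group on primitive vectors of fixed norm shows that the reductions modulo $m$ of the roots of $\bbE_{10}$ fill out every residue class $\bar\gamma\in(\bbZ/m)^{10}$ with $Q(\bar\gamma)\equiv -2$ satisfying the (automatically available) local conditions at the primes dividing $m$, where $Q$ is the intersection form. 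A subgroup of corank at most $2$ still represents $-2$, so it contains such a class; lifting it produces a genuine root in $\ker\frakr$, the desired contradiction. Consequently $C_0$ is cuspidal and $\cha(\bbK)\mid 3m$.

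The hardest point is the last one: upgrading ``a root exists modulo $m$ in the kernel'' to ``a root exists in $\ker\frakr$''. The delicate inputs are the local solvability of $Q\equiv -2$ at each prime $\ell\mid m$ (including $\ell=2$, where $-2\equiv 0$ and one must instead produce an isotropic class), the primitivity needed to invoke Eichler's transitivity, and the verification that a corank-$\leq 2$ subgroup of the reduction really represents $-2$. By contrast, the cuspidal case escapes this mechanism precisely because $\bbG_a[m]\cong(\bbZ/p)^r$ can have large rank $r$, so $\ker\frakr$ may have large corank and avoid all roots; this is exactly the freedom realized by Harbourne's configurations.
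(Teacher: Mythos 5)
Your overall architecture coincides with the paper's: the reduction to ``no root lies in $\ker\frakr$'' is exactly Lemma \ref{L6.9} (your cohomological argument is the same one, and your use of Lemma \ref{-2}(ii) plus reality of all roots of $\bbE_{10}$ is an acceptable substitute for Lemma \ref{L6.8}); the observation that the image of $\frakr$ is generated by at most two elements unless $C_0$ is cuspidal in positive characteristic is precisely how the paper reduces to the arithmetic statement of Theorem \ref{T6.10bis}; and your appeal to Eichler-type transitivity with congruence conditions is the same input as the paper's Strong Approximation for $\bfSpin_{\bl}$ (Theorem \ref{sat}) combined with Witt's theorem over $\bbZ/p^k\bbZ$. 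So this is not a genuinely different route; the question is whether your sketch of the arithmetic endgame holds up, and there it contains one concrete error and leaves the two hardest verifications unproved.

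The error is at the prime $2$. Since $\bbE_{10}$ is even, the condition ``$Q(\bar\gamma)\equiv -2$'' is the wrong invariant there: if $\gamma\equiv\gamma_0\pmod{m\bbE_{10}}$ then $Q(\gamma)\equiv Q(\gamma_0)\pmod{2m}$, so at $p=2$ the invariant attached to a residue class is the value of the \emph{halved} form $q=Q/2$ in $\bbZ/2^k\bbZ$, and a liftable class must satisfy $q\equiv -1$ --- not ``$Q\equiv 0$, i.e.\ isotropic'', as you propose. Concretely, the reductions mod $2$ of the roots of $\bbE_{10}$ are exactly the $496$ \emph{anisotropic} vectors of the even-type quadratic form $q_2$ on $\bbF_2^{10}$ (this is how the paper treats $p=2$ in Section \ref{par:prelim-orbits}); a class with $q_2(\bar\gamma)=0$ has every integral lift of norm divisible by $4$ and admits no lift to a root, so following your prescription at $\ell=2$ the argument breaks. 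The two unproved steps are: (a) that a corank-$\le 2$ submodule of $(\bbZ/m\bbZ)^{10}$ really represents $-2$ (resp.\ $q=-1$ in the $2$-part) --- this is the paper's Lemma \ref{lem:quad-values}, proved by a Hensel-type induction over each prime power exactly dividing $m$, using that the mod-$p$ reduction of a free rank-$8$ submodule still carries a nondegenerate part of rank $\ge 6$; it is not ``automatically available''; and (b) that the isometry of $L_m$ moving a fixed root's class into $V_m$ lifts to $\Or(\bbE_{10})$ --- transitivity of the full orthogonal group modulo $m$ is not enough, one must land in the image of $\Spin$, which forces the parity-of-reflections and spinor-norm corrections that the paper carries out by composing with additional reflections in vectors of the rank-$8$ submodule. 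These three points are exactly the content of the paper's proof of Theorem \ref{T6.10bis}, and your sketch does not yet supply them.
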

 
Note, in particular, that we  assume in the following that the number $n$ of blow-ups is equal to $10$. 
Thus, $\calP$ is now a torsion set of ten distinct points on an irreducible cubic curve $C_0$.
As above, $C$ denotes the strict transform of $C_0$ in the surface $X$. 
The following lemmas provide a way to decide whether $\calP$ is unnodal or not in terms of  
effective roots $\alpha \in K_X^\perp$ (see Section \ref{par:roots}). 

 \begin{lemma}\label{L6.8} Let $\calP$ be a set of $n = 10$ points on an irreducible cubic curve $C_0$. 
 Let $\alpha\in K_X^\perp$ be an effective root. Any effective  representative of  $\alpha$ contains a $(-2)$-curve 
 as one of its irreducible components. 
 \end{lemma}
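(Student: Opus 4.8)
The plan is to argue by contradiction: assuming that some effective representative $D$ of $\alpha$ has no $(-2)$-curve among its irreducible components, I will show that its self-intersection is forced to be nonnegative, contradicting $D^2=\alpha^2=-2$.

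First I would record the geometry of the anticanonical curve. Since $\calP$ consists of ten points on the irreducible cubic $C_0$, its strict transform $C$ represents $-K_X=3e_0-e_1-\cdots-e_{10}$, is irreducible, and satisfies $C^2=K_X^2=-1$ and $C\cdot K_X=1$. The next step is a classification of the irreducible components $Z$ of $D$. For $Z\neq C$ one has $Z\cdot C\ge 0$, hence $Z\cdot K_X\le 0$; feeding this into the genus formula $Z^2+Z\cdot K_X=2p_a(Z)-2$ (parity plus $p_a(Z)\ge 0$) shows that an irreducible curve distinct from $C$ with $Z^2<0$ must be a $(-1)$-curve or a $(-2)$-curve. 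Under the contradiction hypothesis every component of $D$ is therefore $C$, a $(-1)$-curve, or a curve of nonnegative self-intersection.

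Accordingly I would write $D=bC+F+W$, where $F=\sum_k c_kE_k$ collects the $(-1)$-curve components and $W=\sum_m g_mW_m$ collects the components $W_m\neq C$ with $W_m^2\ge 0$ (all multiplicities nonnegative). The key input is the relation coming from $\alpha\in K_X^\perp$: expanding $0=D\cdot K_X$ and using $C\cdot K_X=1$, $E_k\cdot K_X=-1$, $W_m\cdot K_X\le 0$ gives $C\cdot W=b-\sum_k c_k$, and in particular $\sum_k c_k\le b$ because $C\cdot W\ge 0$. Then I would expand $D^2$; the cross terms involving $C$ (using $C\cdot E_k=1$ and $C\cdot W=b-\sum_k c_k$) combine with $b^2C^2=-b^2$ to leave exactly $D^2=b^2+F^2+W^2+2F\cdot W$. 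Since distinct irreducible curves meet nonnegatively, $W^2\ge 0$ and $F\cdot W\ge 0$, while $F^2\ge -\sum_k c_k^2\ge -(\sum_k c_k)^2$. Hence $D^2\ge b^2-(\sum_k c_k)^2\ge 0$, the desired contradiction, so some component of $D$ is a $(-2)$-curve.

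The only delicate point is the presence of $(-1)$-curves: a naive lower bound for $D^2$ is far too weak, and everything hinges on the inequality $\sum_k c_k\le b$ extracted from $D\cdot K_X=0$, which exactly offsets the negative contribution $F^2\ge -(\sum_k c_k)^2$. An alternative, slightly less hands-on route would replace this computation by a Zariski decomposition $D=P+N$: nefness of $P$ together with effectivity of $-K_X=C$ gives $N\cdot K_X\ge 0$, and the same classification of the components of the negative part $N$ (combined with $P\cdot N_i=0$) forces $N^2\ge 0$ unless $N$ contains a $(-2)$-curve, again contradicting $N^2<0$. I would present the elementary computation above, since it avoids invoking Zariski decomposition and is manifestly valid over any algebraically closed field $\bbK$.
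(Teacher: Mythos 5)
Your proof is correct and follows essentially the same route as the paper's: both classify the components via adjunction into $C$, $(-1)$-curves, and curves of nonnegative self-intersection, extract the bound $\sum_k c_k\le b$ from $D\cdot K_X=0$ (the paper phrases it as $\calE\cdot C\le l$), and reach the same arithmetic contradiction via $-\sum c_k^2\ge -(\sum c_k)^2$. The only difference is cosmetic — you expand $D^2$ directly where the paper works with $(D-lC)^2=-2-l^2$.
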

 
 \begin{proof} Let $D$ be an effective representative of the root $\alpha$. Write $D$ as a positive sum $lC+\sum F_i$, where the $F_i$ are irreducible components different from $C$.  Assume that no component $F_i$ is a $(-2)$-curve. Since $-K_X$ is represented by the irreducible curve $C$, we have $K_X\cdot F_i \le 0$. The adjunction formula implies that any curve $F_i$ with negative self-intersection is a $(-1)$-curve. Since 
  $(\sum F_i)^2 = (D-lC)^2 = -2-l^2 < 0$, some of the components $F_i$ are $(-1)$-curves. Write 
  $\sum F_i = \calE+\calA$, where $\calE$ is the sum of $(-1)$-components of $D$. We have
 \[
 \calE^2 \le \calE^2 + 2 \calE \cdot \calA + \calA^2 = \left(\sum F_i\right)^2 = -2-l^2,
 \]
 \[
  \calE\cdot C = (D-lC-\calA)\cdot C = l- \calA\cdot C \le l.
  \]
 Write $\calE = \sum k_jF_j$, where the curves $F_j$ in this sum are distinct $(-1)$-curves. 
 The second inequality gives $\sum k_j \le  l$ and the first one gives $\sum k_j^2 \ge 2+l^2$. This implies
 $2+l^2 \le \sum k_j^2 \le (\sum k_j)^2 \leq l^2$, a contradiction.
 \end{proof}
  
It follows from this lemma that $X$ is unnodal if and only if it does not contain effective roots.

\begin{lemma}\label{L6.9} Let $\calP$ be a torsion set of $n = 10$ points on an irreducible cubic curve $C_0$.
The surface $X$ contains a $(-2)$-curve if and only if there exists a root $\alpha$ in the kernel $S_\frakr$ of the 
restriction homomorphism $
\frakr:K_X^\perp\to \Pic^0(C)$. \end{lemma}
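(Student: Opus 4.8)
The plan is to prove the two implications separately, using the anticanonical restriction sequence to pass between intersection-theoretic data on $X$ and line bundles on $C$. Throughout I use that $n=10$, so $K_X^2=-1$, that $C\in|-K_X|$ is the unique, irreducible, reduced anticanonical curve (by the torsion/K3-like hypothesis), and that $\frakr$ sends a class $D\in K_X^\perp$ to $\calO_X(D)|_C$, which lies in $\Pic^0(C)$ because $\deg(D|_C)=D\cdot C=-D\cdot K_X=0$.

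For the easy direction ($\Rightarrow$), suppose $R$ is a $(-2)$-curve. The adjunction formula gives $R\cdot K_X=0$, so $[R]\in K_X^\perp$ is a root. Moreover $R\cdot C=-R\cdot K_X=0$, and since $R$ and $C$ are distinct irreducible curves (they have different self-intersections, $-2$ versus $C^2=K_X^2=-1$), the equality $R\cdot C=0$ forces $R\cap C=\emptyset$. Hence $\calO_X(R)|_C\cong\calO_C$ and $\frakr([R])=0$, i.e. $[R]\in S_\frakr$ is the required root.

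For the converse ($\Leftarrow$), I would take a root $\alpha\in S_\frakr$ and produce a $(-2)$-curve by first showing that $\alpha$ or $-\alpha$ is effective and then invoking Lemma~\ref{L6.8}. Since $\frakr(\alpha)=0$ means $\calO_C(\alpha)\cong\calO_C$, the restriction sequence reads
\[
0\to\calO_X(K_X+\alpha)\to\calO_X(\alpha)\to\calO_C\to 0 .
\]
Both $\alpha$ and $-\alpha$ lie in $S_\frakr$ because it is a subgroup, so it suffices to rule out the case $h^0(\alpha)=h^0(-\alpha)=0$. Assuming $h^0(\alpha)=0$, the sequence forces $h^0(K_X+\alpha)=0$, hence $H^0(\calO_C)\hookrightarrow H^1(K_X+\alpha)$ gives $h^1(K_X+\alpha)\ge 1$; by Serre duality $h^1(-\alpha)=h^1(K_X+\alpha)\ge 1$ and $h^2(-\alpha)=h^0(K_X+\alpha)=0$. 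Since $\alpha^2=-2$ and $\alpha\cdot K_X=0$ give Euler characteristic $\chi(-\alpha)=\half(-\alpha)\cdot(-\alpha-K_X)+1=0$, Riemann--Roch yields $h^0(-\alpha)=h^1(-\alpha)\ge 1$, contradicting $h^0(-\alpha)=0$. Thus some root among $\{\alpha,-\alpha\}$ is effective, and Lemma~\ref{L6.8} guarantees that any effective representative contains a $(-2)$-curve.

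I expect the main obstacle to be the reverse direction's cohomological dichotomy: one has to combine the triviality $\calO_C(\alpha)\cong\calO_C$ with the anticanonical sequence, Serre duality, and Riemann--Roch so that the apparently symmetric assumption $h^0(\pm\alpha)=0$ collapses, and one must keep careful track of which sign is being pushed to effectivity. Once effectivity is in hand, the reduction to Lemma~\ref{L6.8} is immediate, and the forward direction is routine.
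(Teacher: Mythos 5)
Your proof is correct and follows essentially the same route as the paper: the forward direction via $R\cdot C=0$ and disjointness, and the converse via the restriction sequence $0\to\calO_X(K_X+\alpha)\to\calO_X(\alpha)\to\calO_C(\alpha)\to 0$ combined with Riemann--Roch and Serre duality to force $\alpha$ or $-\alpha$ to be effective, then Lemma~\ref{L6.8}. The only difference is presentational: you argue by contradiction from $h^0(\pm\alpha)=0$, while the paper derives the identity $h^0(-\alpha)+h^0(K_X+\alpha)=h^1(K_X+\alpha)$ once and reads off effectivity directly.
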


\begin{proof} Let $\alpha$ be a root. Consider the exact sequence
\[
0\to \calO_X(K_X+\alpha) \to \calO_X(\alpha)\to \calO_C(\alpha) \to 0,
\]
where the first non-trivial map is the multiplication by a section of $\calO_X(-K_X)$ which vanishes along the curve $C$.
Riemann-Roch Formula, applied to the divisor class $-\alpha$, and Serre's Duality imply
\beq\label{rr}
h^0(-\alpha)+h^0(K_X+\alpha) = h^1(K_X+\alpha).
\eeq
Suppose $\alpha$ is the class of a $(-2)$-curve $R$. Since $C$ is an irreducible curve and $C\cdot R = 0$, the curve $R$ is disjoint from $C$. Hence $\frakr(\alpha) = \calO_C(\alpha) \cong  \calO_C$, and thus $\frakr(\alpha)=0$ in $\Pic(C)$.  Conversely, suppose $\calO_C(\alpha) \cong \calO_C$. Then $h^0(\calO_C(\alpha)) \ne 0$, hence either $h^0(\calO_X(\alpha)) \ne 0$, or $h^1(\calO_X(K_X+\alpha)) \ne 0$. In the second case, 
\[0 < h^1(K_X+\alpha)= h^0(K_X+\alpha)+h^0(-\alpha) \le h^0(\alpha)+h^0(-\alpha)\]
implies that $\alpha$ or $-\alpha$ is effective. Thus, in both case, either $\alpha$ or $-\alpha$ is an effective root and Lemma \ref{L6.8} implies that $X$ contains a $(-2)$-curve.
\end{proof}

\subsection{The reduction to a question on the arithmetic of quadratic forms}
The proof of Theorem \ref{T6.10} that we now describe is rather delicate, and uses strong approximation 
results in a specific situation; unfortunately, we have not been able to find a simpler
geometric argument.

We assume that $\calP\subset C_0$ is a torsion set and make use of the notations introduced
in the previous section.  We assume that either (i) $C_0$ is not cuspidal
or (ii) $C_0$ is cuspidal but $\cha(\bbK)$ does not divide $3m$, and our goal is to show that
$X$ contains a $(-2)$-curve; by Lemma \ref{L6.9}, all we need to prove is the existence
of a root $\alpha$ in the kernel $S_\frakr$ of the morphism $\frakr$. 


Note that, under our assumptions $\Pic^0(C)[m] \cong (\bbZ/mZ)^a$, where $a\le 2$. 
Obviously,  $S_\frakr$ contains the sublattice $mK_X^\perp$. Define 
$V_m= S_\frakr/mK_X^\perp$  and $L_m= K_X^\perp/mK_X^\perp$;   $V_m$ is a submodule of $L_m$ over the ring $\bbZ/m\bbZ$:
\[
V_m = S_\frakr/mK_X^\perp \subset L_m = K_X^\perp/mK_X^\perp \cong (\bbZ/m\bbZ)^{10}.
\] 
This submodule $V_m$ contains a free $(\bbZ/m\bbZ)$-submodule of dimension $8$ because $a\le 2$. 
The morphism $\frakr$ induces a morphism from $L_m$ to $\Pic^0(C)[m]$, and 
Theorem \ref{T6.10} becomes a consequence of  the following purely arithmetic statement.
 
 \begin{theorem}\label{T6.10bis} Let $m$ be a positive integer.  Let $L_m = \bbE_{10}/m\bbE_{10}$ and $V_m$ be a $(\bbZ/m\bbZ)$-submodule of $L_m$ containing a free submodule of rank $8$. Then there exists a root   in $\bbE_{10}$ whose projection into $L_m$ is contained in $V_m$.
\end{theorem}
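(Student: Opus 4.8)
The plan is to read the statement as a congruence problem for the indefinite even unimodular lattice $\bbE_{10}\cong\bbE_8\perp\bbH$ and to solve it by strong approximation, reducing a global existence question to a collection of purely local, prime-by-prime problems. Write $m=\prod_{p\mid m}p^{e_p}$. By the Chinese Remainder Theorem the reduction $L_m=\bbE_{10}/m\bbE_{10}$ splits as $\prod_{p\mid m}L_{p^{e_p}}$ with $L_{p^{e}}=\bbE_{10}/p^{e}\bbE_{10}$, and the submodule $V_m$ splits correspondingly as $\prod_{p\mid m}V_{p^{e_p}}$, each factor still containing a free $(\bbZ/p^{e}\bbZ)$-module of rank $8$. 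A root $\alpha\in\bbE_{10}$ projects into $V_m$ if and only if its reduction modulo $p^{e_p}$ lies in $V_{p^{e_p}}$ for every $p\mid m$. So the goal becomes to produce a single vector $\alpha$ of norm $-2$ in $\bbE_{10}$ whose reduction modulo $p^{e_p}$ lies in $V_{p^{e_p}}$ for all $p\mid m$ simultaneously.

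First I would fix one root $r_0\in\bbE_{10}$. Since $\bbE_{10}$ is even, unimodular, indefinite of signature $(1,9)$ and every norm $-2$ vector is primitive, Eichler's theorem shows that $\Or(\bbE_{10})(\bbZ)$ acts transitively on the set of roots. The quadratic form defines a simply connected group $\Spin(\bbE_{10}\otimes\bbQ)$ whose real points $\Spin(1,9)$ are non-compact, so strong approximation applies and the reduction map $\SO(\bbE_{10})(\bbZ)\to\SO(\bbE_{10})(\bbZ/m\bbZ)=\prod_{p\mid m}\SO(\bbE_{10})(\bbZ/p^{e_p}\bbZ)$ is surjective (up to the standard spinor-norm caveat, which does not affect the orbit count). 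Consequently the reductions modulo $m$ of the roots $g(r_0)$, $g\in\Or(\bbE_{10})(\bbZ)$, fill out the entire $\SO(\bbZ/m\bbZ)$-orbit of $\bar r_0$ in $L_m$. It therefore suffices to prove, for each prime power $p^{e}\parallel m$ separately, that $V_{p^{e}}$ meets the $\SO(\bbZ/p^{e}\bbZ)$-orbit of $\bar r_0$, i.e. that there is a vector of norm $-2$ inside a lift of $V_{p^{e}}$ lying in the correct orthogonal orbit.

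For the local problem at an odd prime I would use the free rank $8$ submodule $F\subset V_{p^{e}}$ directly. Because the ambient form is unimodular of rank $10$, the orthogonal complement $F^\perp$ has rank $2$, so the radical $F\cap F^\perp$ of the restriction $q|_F$ has rank at most $2$ and its non-degenerate part has rank at least $6$. A non-degenerate quadratic form of rank $\ge 3$ over $\bbF_p$ represents every element, so $q|_F$ represents $-2$ modulo $p$ by a vector outside the radical. Since $p$ is odd the affine quadric $\{q=-2\}$ is smooth over $\bbZ_p$, and Hensel's lemma upgrades this to a vector $\alpha_p\in F\otimes\bbZ_p$ with $q(\alpha_p)=-2$ exactly; as $-2$ is a $p$-adic unit, $\alpha_p$ is automatically primitive and lies in the orbit of $r_0$. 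Its reduction modulo $p^{e}$ lies in $V_{p^{e}}$, as wanted. Globalizing these local solutions through strong approximation (equivalently, through the surjectivity above combined with transitivity on roots) then yields the desired global root.

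The main obstacle is the prime $p=2$, and this is exactly where the rank $8$ hypothesis is indispensable. Here $-2$ is not a unit, the quadric $\{q=-2\}$ fails to be smooth over $\bbZ_2$, and both Hensel's lemma and the clean transitivity of the orthogonal group on norm $-2$ vectors modulo $2^{e}$ break down in the naive form; moreover the bilinear and quadratic radicals of $q|_F$ diverge in characteristic $2$. I would treat this case through the explicit $2$-adic structure $\bbE_{10}\otimes\bbZ_2\cong(\bbE_8\otimes\bbZ_2)\perp\bbH$, in which every binary even unimodular summand already represents $-2$ (for instance $u-v$ in a hyperbolic plane with basis $u,v$), combined with the classification of quadratic forms and of $\SO$-orbits of fixed-norm vectors over $\bbZ/2^{e}\bbZ$ in the style of Nikulin's lattice theory. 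The rank $8$ bound keeps the non-degenerate part of $q|_F$ of rank at least $6$, leaving enough room to locate a norm $-2$ vector of $V_{2^{e}}$ in the $\SO(\bbZ/2^{e}\bbZ)$-orbit of $\bar r_0$; assembling this $2$-adic solution with the odd-primary ones via strong approximation completes the proof.
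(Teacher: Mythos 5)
Your overall architecture is the same as the paper's: reduce by the Chinese Remainder Theorem to one prime power at a time, find a vector of the right norm inside the free rank-$8$ submodule of $V_{p^{e}}$, move a fixed root onto it by a local isometry, and glue the local isometries into a global one by strong approximation for $\Spin$. Within that shared framework, however, two steps that carry real weight in the paper are missing or waved away.

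First, the spinor-norm issue is not a ``caveat that does not affect the orbit count''; it is precisely the point that has to be checked. Strong approximation surjects $\Spin(\bbE_{10})(\bbZ)$ onto $\prod_p \Spin(\bbE_{10}/p^{e_p}\bbE_{10})$, but the image of $\Spin(L_{p^{e}})$ in $\SO(L_{p^{e}})$ can have index up to $4$, so the map $\SO(\bbE_{10})(\bbZ)\to\SO(L_m)$ need not be surjective, and a local isometry $\sigma$ with $\sigma(\bar r_0)\in V_{p^{e}}$ need not lift. The paper repairs this by writing $\sigma$ as a product of reflections and post-composing with one or two further reflections $s_h$ whose spinor contributions cancel the defect --- and, crucially, these correcting vectors $h$ are chosen \emph{inside} the free rank-$8$ submodule $M_0\subset V_{p^{e}}$ (using that $M_0$ represents all units of $\bbZ/p^{e}\bbZ$), so that the corrected isometry still sends $\bar r_0$ into $V_{p^{e}}$. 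Your proposal contains no mechanism for making the correction compatible with landing in $V_{p^{e}}$; this is a genuine gap, not bookkeeping. Second, your treatment of $p=2$ is a statement of intent rather than an argument: the actual content needed is that the free rank-$8$ submodule of $V_{2^{e}}$ contains a vector of norm $-2$ (equivalently $q=-1$ for the associated even quadratic form) over $\bbZ/2^{e}\bbZ$, and that such a vector is equivalent to $\bar r_0$ under (the image of) $\Spin(L_{2^{e}})$. The paper gets both uniformly in $p$ from a single Hensel-type induction (its Lemma on representing units, which lifts a pair $(v,w)$ with $b(v,w)=1$ from level $p^{k}$ to $p^{k+1}$) together with Witt's theorem for quadratic modules over local rings; adopting that lemma would let you delete the separate, unproved $2$-adic discussion and also supply the correcting reflections needed for the spinor-norm step.
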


We employ the  theory of quadratic forms over any  commutative ring $A$. Let $M$ be a finitely generated $A$-module. A function $q:M\to A$ is called a {\bf quadratic form} if 
\begin{itemize}
\item for any $x\in M$ and $a\in A$, \ $q(ax) = a^2q(x)$;
\item $b_q(x,y)= q(x+y)-q(x)-q(y)$ is a symmetric bilinear map $M\times M \to A$. 
\end{itemize}
The bilinear form $b_q$ is called the associated symmetric bilinear form. If $2$ is invertible in $A$, then $q = 2^{-1}b(x,x)$, and the notion of a quadratic form is equivalent to the notion of a symmetric bilinear form. A module $M$ equipped with a quadratic form  $q$ is called a {\bf quadratic module}.

We denote by $\bl$ the bilinear form on $K_X^\perp$ given by the intersection product, and we 
 equip $L_m$ with the symmetric bilinear form $\bl_m$ obtained by  reduction of  $\bl$  modulo $m$. When $m$ is even, we equip $L_m$ with the structure of a quadratic module by setting 
\[
q_m(x) = \frac{1}{2}b_m(x,x)
\] 
(recall that our lattice $K_X^\perp$ is an even unimodular lattice). 
Let $\Or(L_m)$ be the orthogonal group of $(L_m,\bl_m)$ (resp. of $(L_m,q_m)$ when $m$ is even).

To prove Theorem \ref{T6.10bis}, fix a root $\alpha$ and consider its image $\bar{\alpha}$  in $L_m$. Suppose we find an element $\sigma\in \Or(L_m)$ such that $\sigma(\bar{\alpha})\in V_m$. Suppose, moreover, that $\sigma$ lifts to an element $w\in \Or(K_X^\perp)$. Then $w(\alpha)$ is a root, $w(\alpha)$ is contained in $S_\frakr$, and we are done. We now develop this strategy. 

\subsection{Orthogonal and Spin groups modulo ${p^l}$} 
Since the quadratic form $\bl$ of the lattice $\bbE_{10}$ is unimodular, there is a connected  smooth group scheme $\bfSO_\bl$ over 
$\bbZ$ such that $ \bfSO_\bl(\bbZ)$ coincides with the group of isometries of the lattice $\bbE_{10}$ with determinant $1$. 

The universal cover of $\bfSO_\bl$ is a smooth group scheme $\bfSpin_\bl$ over $\bbZ$; the group
$\bfSpin_\bl(\bbZ)$ is the  group $\Spin(\bbE_{10})$ of all invertible elements of the even part of the Clifford algebra of the quadratic $\bbZ$-module $\bbE_{10}$ such that the corresponding inner automorphism  leaves invariant $\bbE_{10}$ (see \cite{Knus}, Chapter IV, \S 5).

There is an exact sequence of group schemes
\begin{equation}\label{eq:Spin-to-SO}
1\to \bfmu_2\to \bfSpin_{\bl} \to \bfSO_{\bl} \to 1
\end{equation}
where $\bfmu_2$ is the group of square roots of $1$. 
For any commutative ring $A$ with $\Pic(A) = 0$, the exact sequence defines the following exact sequence of groups
\[
1\to \bfmu_2(A)\to \bfSpin_{\bl}(A) \to \bfSO_\bl(A) \to A^*/A^*{}^2
\]
(see \cite{Knus},  Theorem 6.2.6). This will be applied to  $A = \bbZ/p^k\bbZ$ for prime numbers $p$. In this case $|A^*/A^*{}^2| = |\bfmu_2(A)|$; this number is equal to $2$ if $p \ne  2$, and to $1$, $2$, or $4$ if $p=2$ and $k=1$, $k=2$, or  $k\geq 3$ respectively. Thus the image of $\bfSpin_{\bl}(A)$ in $\bfSO_\bl(A)$ is of index at most $4$.

The group scheme $\bfSpin_\bl$ is requested in order to apply   the Strong Approximation Theorem (see \cite{Kneser}, Theorem 24.6):
 
 \begin{theorem}[Strong Approximation]\label{sat} Let $M$ be a unimodular indefinite integral quadratic lattice of rank $\ge 3$; if $p$ is a prime integer, denote by $M_p = M\otimes \bbZ_p$ its $p$-adic  localization at  $p$. Let $p_i$, $i\in I$, be a finite set of prime numbers. Then the canonical homomorphism  
 \[
 \Spin(M) \to \prod_{i\in I}\Spin(M_{p_i})
 \]
 has a dense image.
 \end{theorem}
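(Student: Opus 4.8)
The statement is the integral, finitely-many-primes form of strong approximation, and the plan is to \emph{derive} it from the adelic strong approximation theorem for the simply connected group $\bfSpin_\bl$, the genuinely deep input being the latter. Write $G=\bfSpin_\bl$ for the smooth $\bbZ$-group scheme attached to $M$ through the exact sequence \eqref{eq:Spin-to-SO}; it is semisimple and \emph{simply connected}, with $G(\bbZ)=\Spin(M)$ and $G(\bbZ_p)=\Spin(M_p)$. Two features are decisive. First, $G$ is simply connected: this is exactly why one passes to the spin cover rather than working with $\bfSO_\bl$, for which the analogous density is \emph{false} (nontrivial spinor genera force class number $>1$). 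Second, since $M$ is indefinite, the real group $G(\bbR)\cong\Spin(p,q)$ with $p,q\ge 1$ is \emph{non-compact}. These are precisely the hypotheses of the adelic strong approximation theorem (Kneser--Platonov; \cite{Kneser}), which asserts that $G(\bbQ)$ is dense in the finite-adelic group $G(\bbA_f)=\prod'_p G(\bbQ_p)$, the restricted product relative to the compact open subgroups $G(\bbZ_p)$.

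Granting the adelic statement, the deduction is a congruence (Chinese Remainder) argument. Fix target elements $g_i\in\Spin(M_{p_i})=G(\bbZ_{p_i})$ and exponents $k_i$. Assemble the finite adele $a=(a_p)_p$ with $a_{p_i}=g_i$ and $a_p=1$ at every other finite prime; then $a\in\prod_p G(\bbZ_p)$, a compact open subgroup of $G(\bbA_f)$. By density of $G(\bbQ)$ in $G(\bbA_f)$ there is $\gamma\in G(\bbQ)$ with $\gamma\equiv g_i\pmod{p_i^{k_i}}$ for each $i\in I$ and with $\gamma\in G(\bbZ_p)$ for every remaining finite $p$ (an open condition around the identity). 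Consequently $\gamma\in G(\bbQ)\cap\prod_p G(\bbZ_p)=G(\bbZ)=\Spin(M)$, while its image in $\prod_{i\in I}G(\bbZ_{p_i})$ is as close to $(g_i)_{i\in I}$ as was prescribed. This is exactly the asserted density of $\Spin(M)$ in $\prod_{i\in I}\Spin(M_{p_i})$.

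It remains to indicate how the adelic density for $G$ is established, since that is where the content lies. The engine is the elementary strong approximation for the additive group: $\bbQ$ is dense in $\bbA_f$ (given finitely many congruences, CRT produces an integer satisfying them which is automatically integral at all other primes). For $G$ one transports this through unipotent subgroups. In the case at hand $M\cong\bbE_{10}$ has rank $10$, so by Meyer's theorem the indefinite rational form $\bl$ is isotropic over $\bbQ$; hence $G$ is $\bbQ$-isotropic and contains root subgroups $U_\alpha\cong\bbG_a$ defined over $\bbQ$. For each $\alpha$, the additive strong approximation gives $\overline{U_\alpha(\bbQ)}=U_\alpha(\bbA_f)$, and since $U_\alpha(\bbQ)\subseteq G(\bbQ)$ this places $U_\alpha(\bbA_f)$ inside the closure $H:=\overline{G(\bbQ)}$. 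Because $G$ is simply connected and isotropic, the Kneser--Tits property yields that $G(\bbQ_p)$ is generated by the local unipotents $U_\alpha(\bbQ_p)$ at every finite $p$, and correspondingly $G(\bbA_f)$ is generated by the subgroups $U_\alpha(\bbA_f)$; as $H$ is closed and contains each $U_\alpha(\bbA_f)$, it follows that $H=G(\bbA_f)$.

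The main obstacle is exactly this last adelic step, and its difficulty is concentrated in two places. The first is the passage from generation by unipotents to density of the \emph{global} points in the \emph{adelic} group, i.e.\ the assertion that $\overline{G(\bbQ)}$ is all of $G(\bbA_f)$; this genuinely uses simple connectedness and fails for $\bfSO_\bl$. The second is that, for the lattices of rank $3$ or $4$ also covered by the theorem, $\bl$ need not be $\bbQ$-isotropic, so the convenient $\bbQ$-rational root subgroups are unavailable; there one must invoke the full Kneser--Platonov theorem, which uses only the non-compactness of $G(\bbR)$ (isotropy at the archimedean place) together with a more delicate reduction to reach the same conclusion. For the application of this paper, where $M\cong\bbE_{10}$ is of rank $10$ and signature $(1,9)$, the $\bbQ$-isotropic route sketched above suffices.
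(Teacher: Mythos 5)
The paper does not actually prove this theorem: it is imported as an external input, with the reference \cite{Kneser}, Theorem 24.6, where it appears in essentially this lattice-theoretic form. So there is no internal proof to compare against; what your proposal does is push the citation one step further back, deducing the stated integral, finitely-many-primes version from the adelic strong approximation theorem for the simply connected group scheme $\bfSpin_\bl$. That deduction is correct: the adele equal to $g_i$ at $p_i$ and to $1$ at all other primes lies in the compact open subgroup $\prod_p G(\bbZ_p)$ of $G(\bbA_f)$; density of $G(\bbQ)$ produces $\gamma$ satisfying the prescribed congruences at the $p_i$ and lying in $G(\bbZ_p)$ at every other prime; and $G(\bbQ)\cap\prod_p G(\bbZ_p)=G(\bbZ)=\Spin(M)$ because $G$ is affine over $\bbZ$ and $\bbZ=\bbQ\cap\bigcap_p\bbZ_p$. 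Your final paragraph, however, is only a sketch of the adelic theorem itself, and you rightly flag that its real difficulty (passing from generation by unipotent subgroups to density of $G(\bbQ)$ in $G(\bbA_f)$, which needs simple connectedness and a uniformity over primes that finite products of root subgroups do not obviously give) is not reproduced; so the deep content remains a quotation --- of Kneser--Platonov rather than of Kneser's Theorem 24.6 --- which puts your argument on the same logical footing as the paper's, with the added value of making the standard reduction explicit. Two minor corrections: your hedge about possibly anisotropic lattices of rank $3$ or $4$ is vacuous, since an indefinite \emph{unimodular} lattice is always isotropic over $\bbZ$ (an odd one is diagonalizable as $\langle 1\rangle^a\perp\langle -1\rangle^b$, hence contains an isotropic vector $e+f$ with $q(e)=1$, $q(f)=-1$; an even one contains a hyperbolic plane $\bbH$), so the isotropic route works uniformly and even Meyer's theorem is unnecessary; and the parenthetical claim that density fails for $\bfSO_\bl$ because of class numbers $>1$ is inaccurate for the lattices at hand (indefinite unimodular lattices of rank $\ge 3$ have a single class in their genus), although the broader point that strong approximation genuinely requires simple connectedness is correct.
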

 
Note that, by the Chinese Remainder Theorem,
\[
\Or(L_m) = \prod_i \Or(L_{p_i^{k_i}}),
\]
where $m = \prod p_i^{k_i}$ is the prime factorization of $m$. 

Since the fibers of the natural homomorphism $\Spin(M_{p_i})\to \Spin(M/{p_i}^{k_i})$ are open subsets of $\Spin(M_{p_i})$, we obtain a commutative diagram
\beq
\xymatrix{\Spin(\bbE_{10})\ar[r]\ar[d]& \bfSO_\bl(\bbZ) \ar[d]\\
\Spin(\bbE_{10}/m\bbE_{10})\ar[r]&\SO(L_m),}
\eeq
for which the first vertical arrow is surjective, by the Strong Approximation Theorem. Suppose we find a root $\alpha \in \bbE_{10}$ and an element $\sigma$ in the image of the bottom horizontal arrow such that $\sigma(\bar{\alpha})\in V_m$. Then, the commutative diagram shows that  $\sigma$ can be lifted to an element $w\in \bfSO_\bl(\bbZ)$ such that  the projection of $w(\alpha)$ into $L_m$ is contained in $V_m$, and therefore Theorem \ref{T6.10bis} is proved. 

\subsection{Orbits of $\SO_{\bl}$ and $\Spin_{\bl}$ modulo $p$}\label{par:prelim-orbits}
This section is a warm up for the following ones. Fix a root $\alpha$ in $\bbE_{10}$ and  denote by $\bar{\alpha}$ 
its image in $L_p$, where $p$ is  a prime integer.

Since  $L_p$ is a non-degenerate quadratic space over $\bbF_p$, the rank of any submodule of $L_{p}$ on which the quadratic form vanishes identically modulo $p$ is at most $5$. Any  quadratic form of rank $\ge 2$ over $\bbF_p$ represents all elements in $\bbF_p^*$ (\cite{Serre}, Chapter I). Thus, 
$b_p$ represents all elements of $\bbF_p^*$.

Let $r\in V_p$ be an element with $r^2:= b_p(r,r) = -2$.  
If $p\ne 2$, Witt's Theorem provides an element $\sigma\in \Or(L_p)$ such that $\sigma(\bar{\alpha}) = r$.  

If  $p = 2$, we consider  $L_2 = \bbE_{10}/2\bbE_{10} \cong \bbF_2^{10}$ as a quadratic space with the quadratic form  
$q_2(x) = \frac{1}{2} b(x,x) \mod 2$. Since $V_2$ is of dimension $\ge 8$,  $q_2$ does not vanish identically on $V_2$.
There are $496 = 2^4(2^5-1)$ vectors in the set $q_2^{-1}(1)$ and all of them are represented by roots (see \cite{Cossec-Dolgachev}, Remark 4.7).  Thus there are elements of $V_2$ which are represented by roots. 

As a consequence, {\sl{there is a root $\alpha$ in $\bbE_{10}$, such that, for each prime number $p$, there is an element $\sigma\in \Or(L_p)$ satisfying $\sigma(\bar{\alpha}) \in V_p$.}}

We now extend this idea to the case $m = p^k$ with $k > 1$, and use this extension to prove Theorem \ref{T6.10bis}.

\subsection{Quadratic modules and orbits modulo $p^k$}

Here, the ring is  $A = \bbZ/p^k\bbZ$ and the quadratic module is $M = V_m$, equipped with the symmetric bilinear form $b_{p^k}$ or the quadratic form $q_{p^k}$ when $p = 2$. Since $A$ is local, with maximal ideal  $\frakm = pA$, an element $a$ in $A$ is invertible if and only if $a$ is not contained in $\frakm$.

\begin{lemma}\label{lem:quad-values} 
The quadratic module $V_{p^k}$ represents all invertible elements of the ring $\bbZ/p^k\bbZ$ (i.e. $\forall a\in \bbZ/p^k\bbZ$, there exists $x\in V_{p^k}$ such that $q(x) = a$). The same property holds for all free quadratic submodules $M_0\subset V_{p^k}$ of rank~$8$.
\end{lemma}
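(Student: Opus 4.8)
The plan is to prove the assertion for an arbitrary \emph{free} submodule $M_0\subset V_{p^k}$ of rank $8$; since $M_0\subseteq V_{p^k}$, every unit represented by $M_0$ is automatically represented by $V_{p^k}$, so this settles both statements at once. Write $A=\bbZ/p^k\bbZ$ and let $b$ denote the reduction modulo $p^k$ of the intersection form of $\bbE_{10}$. For odd $p$, $2\in A^*$, so $q(x):=\half\,b(x,x)$ is defined on $L_{p^k}$; for $p=2$, $q:=q_{2^k}$ is the quadratic form afforded by the evenness of $\bbE_{10}$. In both cases $b$ is the polarization of $q$. Fix a unit $a\in A$; I must produce $x\in M_0$ with $q(x)=a$.

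First I would reduce modulo $p$. Because $M_0$ is \emph{free} of rank $8$, its image $\overline{M_0}$ in $L_p=\bbE_{10}/p\bbE_{10}$ is again $8$-dimensional over $\bbF_p$: if $x\in M_0$ lies in $pL_{p^k}$ then $p^{k-1}x=0$, so in free coordinates for $M_0$ every coordinate of $x$ is divisible by $p$, whence $x\in pM_0$; thus $M_0\cap pL_{p^k}=pM_0$ and $M_0/pM_0\to\overline{M_0}$ is an isomorphism. By the bound recalled in Section \ref{par:prelim-orbits}, a subspace of $L_p$ on which the quadratic form vanishes identically has dimension at most $5<8$; hence the restriction $\overline q$ of the reduced form to $\overline{M_0}$ does not vanish identically.

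Next I would choose a base point $x_0\in M_0$ with $q(x_0)\equiv a\pmod p$ \emph{and} with $\overline{x_0}$ outside the radical $R$ of $b$ restricted to $\overline{M_0}$; this second property is what drives the lifting. For odd $p$ the radical of $\overline q$ is totally singular, hence of dimension $\le5$, so $\overline q$ has rank $\ge3$ and by Section \ref{par:prelim-orbits} represents every element of $\bbF_p^*$; picking $\overline{x_0}$ with $\overline q(\overline{x_0})=\bar a$ gives $b(\overline{x_0},\overline{x_0})=2\bar a\neq0$, so $\overline{x_0}\notin R$ automatically. For $p=2$ one has $\bar a=1$, and the polar form $b$ is a nondegenerate alternating form on $L_2\cong\bbF_2^{10}$ (unimodularity of $\bbE_{10}$), so its totally isotropic subspaces have dimension at most $5$; as $\dim\overline{M_0}=8$, the radical $R$ is proper. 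On $R$ the form $\overline q$ is additive, hence $\bbF_2$-linear. If no vector with $\overline q=1$ lay outside $R$, then either $\overline q|_R=0$, forcing $\{\overline q=1\}=\varnothing$ and contradicting $\overline q\neq0$; or, choosing $v\in R$ with $\overline q(v)=1$ and $w\in\overline{M_0}\setminus R$ (so $\overline q(w)=0$ by hypothesis), the vector $w+v\notin R$ would satisfy $\overline q(w+v)=\overline q(w)+\overline q(v)+b(w,v)=1$, a contradiction. Hence a valid $x_0$ exists in every case.

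Finally I would lift $x_0$ by successive approximation. Assume $x_j\in M_0$ satisfies $q(x_j)\equiv a\pmod{p^{j}}$ with $j\ge1$, and write $q(x_j)=a+p^{j}c$. For any $y\in M_0$,
\[
q(x_j+p^{j}y)=q(x_j)+p^{j}b(x_j,y)+p^{2j}q(y)\equiv a+p^{j}\bigl(c+b(x_j,y)\bigr)\pmod{p^{j+1}},
\]
since $2j\ge j+1$. As $\overline{x_j}=\overline{x_0}\notin R$, the functional $y\mapsto b(x_j,y)\bmod p$ on $\overline{M_0}$ is nonzero, hence onto $\bbF_p$, so some $y$ gives $b(x_j,y)\equiv-c\pmod p$; then $x_{j+1}=x_j+p^{j}y$ satisfies $q(x_{j+1})\equiv a\pmod{p^{j+1}}$. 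After $k-1$ steps I reach $x\in M_0$ with $q(x)=a$. I expect the only delicate point to be the $p=2$ base case: there the polar form is alternating and may degenerate on $\overline{M_0}$, so one cannot simply invoke nondegeneracy to place $x_0$ off the radical and must instead exploit the additivity of $\overline q$ on $R$ together with the isotropy bound $\dim R\le5$; everything else is the routine Hensel lifting of quadratic forms over $\bbZ/p^k\bbZ$.
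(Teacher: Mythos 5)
Your proof is correct and follows essentially the same route as the paper's: both are Hensel-type liftings from $p$ to $p^k$, the paper tracking the needed non-degeneracy by carrying a companion vector $w$ with $b_q(v,w)=1$ through an induction on $k$, while you track it by pinning the reduction of the base point off the radical of the polar form modulo $p$ --- the same condition in different clothing. If anything, your treatment of the base case at $p=2$ (additivity of $\overline{q}$ on the radical together with the isotropy bound $\dim R\le 5$) is more detailed than the paper's, which simply asserts the existence of the pair $(v,w)$ with a back-reference to its discussion of orbits modulo $p$.
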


\begin{proof} 
We prove the lemma for $V_{p^k}$ by induction on $k$. The same proof applies for submodules of $V_{p^k}$ of rank $8$.  

When $k=1$, $V_p$ represents all non-zero elements of the field $\bbF_p$, as explained in Section \ref{par:prelim-orbits}. 
More precisely, for all $a\in \bbF_p^*$ there exists a pair of vectors $(v,w)$ in $V_p$ such that $q(v)=a \mod p$ and $b_q(v,w)=1 \mod p $.

Let $k$ be a positive integer. The induction hypothesis asserts that for all invertible elements $a$ of $\bbZ/p^k\bbZ$
there exists a pair of vectors $(v,w)$ in $V_{p^k}$ such that 
\[
q(v)=a \mod p^k  \quad \text{and} \quad b_q(v,w)=1 \mod p^k.
\]
Let $a$ be an invertible element of $\bbZ/p^{k+1}\bbZ$. Apply the induction hypothesis to find elements $v$ and $w$ of  $V_{p^{k+1}}$ such that $q(v)=a \mod p^k$ and $b_q(v,w)=1 \mod p^k $. Then $b_q(v,w)$ is invertible in $\bbZ/p^{k+1}\bbZ$ and changing $w$ in one of its multiple we construct
a vector $w$ such that $b_q(v,w)=1 \mod  p^{k+1} $. Write $q(v)=a+ bp^k$ and change $v$ into $v'=v-bp^kw$; then $q(v')=a\mod p^{k+1}$. We still have $b_q(v',w)=1\mod p^k $, so that a multiple $w'$ of $w$ satisfies $b_q(v',w')=1$. 
The Lemma is proved by induction.
\end{proof}

Now we invoke the following analog of Witt's Theorem for quadratic modules over local rings (see \cite{Kneser}, (4.4)).

\begin{lemma}[Witt's Theorem] Let $M$ be a quadratic  module over a local ring $A$ with maximal ideal $\frakm$. Let $F,G$ be free primitive submodules of   $M$ over the ring $A$. Any isomorphism of quadratic modules $F\to G$ extends to an automorphism of the quadratic module $M$.
\end{lemma}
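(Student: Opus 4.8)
The plan is to prove the extension property by induction on the rank of $F$, reducing everything to the transitivity statement that $\Or(M)$ acts transitively on unimodular vectors carrying a prescribed value of $q$. For the inductive step, choose a unimodular $x\in F$ and set $y=\phi(x)\in G$; if one produces $\psi\in\Or(M)$ with $\psi(x)=y$, then $\psi^{-1}\circ\phi$ fixes $x$, and it suffices to extend its restriction to the submodules lying in $x^{\perp}$. Here the behaviour splits according to $q(x)$. If $q(x)\in A^{*}$, then $Ax$ is a nondegenerate rank-one summand, one has orthogonal decompositions $F=Ax\perp(F\cap x^{\perp})$ and $\psi^{-1}(G)=Ax\perp(\psi^{-1}(G)\cap x^{\perp})$, the complement $x^{\perp}$ is again nondegenerate, and induction applies to the induced isometry $F\cap x^{\perp}\to\psi^{-1}(G)\cap x^{\perp}$ of smaller rank. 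If $q(x)\in\frakm$, then $x$ does not split off by itself; but since $x$ is unimodular and $M$ nondegenerate there is $w\in M$ with $b_{q}(x,w)=1$, and the submodule $P=Ax+Aw$ has unit discriminant, hence is a nondegenerate rank-two summand $M=P\perp P^{\perp}$ containing $x$. One then transports the whole plane $P$ and recurses on $P^{\perp}$.

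Thus the crux is the transitivity statement, which I would establish using reflections and Eichler transformations. For a vector $v$ with $q(v)\in A^{*}$ the reflection
\[
\tau_{v}(z)=z-b_{q}(z,v)\,q(v)^{-1}v
\]
is an isometry of $(M,q)$. A one-line computation using $q(x)=q(y)$ gives $b_{q}(x,x-y)=q(x-y)$, whence $\tau_{x-y}(x)=y$ as soon as $q(x-y)\in A^{*}$; likewise $\tau_{x+y}(x)=-y$ when $q(x+y)\in A^{*}$, and then $\tau_{y}\circ\tau_{x+y}$ sends $x$ to $y$ provided $q(y)\in A^{*}$. When $2\in A^{*}$ these cases cover every situation in which $x$ is anisotropic modulo $\frakm$, and the residual cases are absorbed into the rank-two (hyperbolic) reduction above. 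This part is essentially the reduction of the problem modulo $\frakm$ to the classical Witt theorem over the residue field, which is already used in Section~\ref{par:prelim-orbits}.

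The remaining and genuinely delicate situation is $p=2$, together with any case in which the relevant vectors have non-unit norm, since then $q(x\pm y)$ need not be invertible and the reflections cannot be formed because one cannot divide by $q(v)$. Here I would invoke the Eichler--Siegel--Dickson transformations: for an isotropic $u$ (that is, $q(u)=0$) and any $v\in u^{\perp}$ the division-free map
\[
E_{u,v}(z)=z-b_{q}(v,z)\,u+b_{q}(u,z)\,v-q(v)\,b_{q}(u,z)\,u
\]
is an isometry of $(M,q)$ fixing $u$, valid in every characteristic. Using an isotropic vector $u$ orthogonal to the current data --- available because $M$ is nondegenerate of sufficient rank, and in the application $M=V_{p^{k}}$ carries many hyperbolic summands --- one modifies $x$ (or $y$) by a suitable $E_{u,v}$ until $q(x-y)$ becomes a unit, thereby reducing to the reflection case, and one checks that the composite preserves the quadratic form $q$ and not merely the bilinear form $b_{q}$. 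The main obstacle is precisely this characteristic-two bookkeeping: guaranteeing at each stage that enough isotropic vectors orthogonal to the configuration are present, and choosing the Eichler transformations so as to realize the required normalisation while respecting $q$. Everything else is the formal Witt induction sketched above.
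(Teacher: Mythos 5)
The paper offers no proof of this lemma: it is quoted from Kneser's book (reference (4.4) there) and used as a black box, so there is no in-paper argument to compare yours against. Your sketch follows the standard route one finds in Kneser --- induction on the rank of $F$, reduction to transitivity of $\Or(M)$ on unimodular vectors of prescribed norm, reflections when the relevant norms are units, Eichler transformations otherwise --- and you are right to supply the nondegeneracy hypothesis on $M$ that the statement in the paper silently omits. But as written the argument has concrete gaps beyond ``bookkeeping''.

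First, the splitting $M=Ax\perp x^{\perp}$ requires $b_q(x,x)=2q(x)$ to be a unit, not merely $q(x)$: when the residue characteristic is $2$ (the case $A=\bbZ/2^k\bbZ$, which is needed in the paper's application) one has $2q(x)\in\frakm$ always, so the rank-one branch of your induction is empty and everything must pass through the rank-two branch; your dichotomy ``$q(x)\in A^*$ versus $q(x)\in\frakm$'' is therefore the wrong one. Second, in the rank-two branch, after writing $M=P\perp P^{\perp}$ with $P=Ax+Aw$, the submodule $F$ need not decompose as $(F\cap P)\perp(F\cap P^{\perp})$ --- the $P$-components of the other basis vectors of $F$ need not lie in $Ax$ --- so ``transport $P$ and recurse on $P^{\perp}$'' is not yet an induction; the standard repair is to first extend the given isometry $F\to G$ to an isometry $F+Aw\to G+Aw'$ by choosing compatible hyperbolic complements (this is exactly where primitivity of $F$ and $G$ and nondegeneracy of $M$ enter), and only then split off the plane. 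Third, the transitivity step that you defer to Eichler transformations requires an isotropic vector orthogonal to $x$ and $y$; such a vector exists in the paper's application (where $M=\bbE_{10}/p^k\bbE_{10}$ has rank $10$ and contains hyperbolic planes), but not for a general nondegenerate quadratic module over a general local ring, so your argument does not prove the lemma in the stated generality. None of these is fatal --- they are precisely the points the standard proof is organized around --- but each needs to be addressed before the sketch becomes a proof.
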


Here primitive means that the quotient module is free. 

\subsection{Proof of Theorem \ref{T6.10bis}}

We are ready to prove Theorem \ref{T6.10bis}, hence Theorem \ref{T6.10}, and Theorem \ref{thm:Harbourne-if-cubic}.

Let $m$ be a positive integer and $\prod_i p_i^{k_i}$ be its decomposition into prime factors.
Let $\alpha\in \bbE_{10}$ be a root (for example $\alpha=e_1-e_2$).

Let $p^k$ be any of the factors $p_i^{k_i}$. Consider the image $\bar{\alpha}$ of $\alpha$ in $L_{p^k}$. 
Fix a free submodule $M_0$ of rank $8$ in $V_{p^k}$.
By Lemma \ref{lem:quad-values}, there is an element $v\in M_0$ with $q_{p^k}(v) = -2\in \bbZ/p^k\bbZ$ if $p\ne 2$ and $q_{p^k}(v) = 1$ 
if $p=2$. The element $v$ generates a free primitive submodule of $L_{p^k}$. By Witt's Theorem, we find an element $\sigma\in \Or(L_{p^k})$ such that $\sigma(\bar{\alpha}) = v$; in particular, $\sigma(\bar{\alpha})$ is contained in $V_{p^k}$. 
Let us show that  $\sigma$ can be chosen in the image of the map $\Spin(L_{p^k})\to \Or(L_{p^k})$. 

Recall that the reflection $s_h$ with respect to a vector $h$, for which $q(h)$ is invertible, is defined by the formula 
\[
s_h(x) = x-\frac{b(x,h)}{q(h)}h.
\]
If $h$ is in $M_0$, $s_h$ is an isometry of $L_{p^k}$ that preserves $M_0$. 
By Theorem (4.6) from \cite{Kneser}, any isometry of $M_0$ (resp. $L_{p^k}$) is the product of reflections in elements from $M_0$ (resp. $L_{p^k}$). As explained in \cite{Kneser}, page 39, Section 8, an element $\eta$ of $\Or(L_{p^k})$ is in the image of $\Spin(L_{p^k})$ if and only if $\eta$ is a product of an even number of reflections, $\eta=s_{h_1}\circ \ldots s_{h_{2s}}$, and its spinor   norm ${\text{\sc{sn}}}(\eta)$ is $1$, i.e. 
\[
{\text{\sc{sn}}}(\eta):=q(h_1)\cdots q(h_{2s}) = 1 \mod (\bbZ_{p^k}^*)^{2}.
\]
(see also \cite{Knus}, chapter IV, \S 6) 

Write $\sigma$ as a composition of reflections $s_{h_i}$, $1\leq i \leq l$,  with $q(h_i)\in \bbZ_{p^k}^*$ ($l$ may be odd). Apply Lemma \ref{lem:quad-values} to find vectors $h_{l+1}$ and $h_{l+2}$ in $M_0$ 
such that 
\[
q(h_{l+1}) = \prod_{i=1}^l q(h_i)^{-1} \quad {\text{and}} \quad  q(h_{l+2})=1\mod (\bbZ_{p^k}^*)^{2}.
\]
Change $\sigma$ into $s_{h_{l+1}}\circ \sigma$ or $s_{h_{l+2}}\circ s_{h_{l+1}}\circ \sigma$ to obtain an isometry which is a product of an even number of reflections. After such a modification, ${\text{\sc{sn}}}(\sigma)=1$ and $\sigma$ is in the image of $\Spin(L_{p^k})$. Since $M_0$ is preserved by $s_{h_{l+1}}$ and $s_{h_{l+2}}$, $\sigma(\bar{\alpha})$ is contained in $M_0$, and thus in $V_{p^k}$.

Let now $\bar{\alpha}$ denote the image of $\alpha$ into $L_m$.
Since the previous argument applies to all prime factors $p_i^{k_i}$ of $m$, the Chinese Remainder Theorem shows the existence
of an element $\sigma$ in $\Spin(\bbE_{10}/m\bbE_{10})$ such that $\sigma(\bar{\alpha})\in V_m$.
By the Strong Approximation Theorem, $\sigma$ lifts to an element $\sigma'$ in $\Spin(\bbE_{10})$; then, the image $w$ of
$\sigma'$ in $\SO_{\bl}(\bbZ)$ maps the root $\alpha$ onto a root $w(\alpha)$ whose projection modulo $m$ is in $V_m$. This proves
Theorem \ref{T6.10bis} and hence Theorem \ref{T6.10}.

\section{Non algebraically closed fields and other surfaces}\label{Section:GeneralFields}

\subsection{Non algebraically closed fields} In this Section, the ground field $\bbK$ is not necessarily algebraically closed. Let $\overline{\bbK}$ be its algebraic closure and let $\bar{X} = X\otimes_\bbK \overline{\bbK}$ be obtained from $X$ by  base field change.  Let $W_{\bar{X}}$ denote  the Coxeter
subgroup of $\Or(\Pic(X_{\overline{\bbK}}))$, as defined in Section \ref{par:WXWn}.  We have a sequence of inclusions 
\[
\Aut(X)^*\subset \Aut(\bar{X})^*\subset W_{\bar{X}}\subset \Or(K_{\bar{X}}^\perp).
\]
We say that $X$ is {\bf{Cremona special over $\bbK$}} if $ \Aut(X)^*$
has finite index in $W_{\bar{X}}$ and $W_{\bar{X}}$ is infinite.  The following result extends the Main Theorem to arbitrary fields $\bbK$.

\begin{theorem}
  If $X$ is Cremona special over $\bbK$, then 
\begin{itemize}
\item $\bar{X}$ is  unnodal;
\item $X$ is obtained from $\bbP^2_{\bbK}$ by blowing up a finite subset of $\bbP^2(\bbK)$;
\item $X$ is a Halphen, a Coble, or a Harbourne example (over $\mathbb{K}$).
\end{itemize}
\end{theorem}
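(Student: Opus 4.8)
The plan is to prove the theorem by Galois descent from the algebraically closed case, using the Main Theorem over $\overline{\bbK}$ as a black box and then showing that the entire structure is already defined over $\bbK$. First I would reduce to the geometric situation. Since $\Aut(X)^*\subset\Aut(\bar X)^*\subset W_{\bar X}$ and $\Aut(X)^*$ has finite index in the infinite group $W_{\bar X}$, the group $\Aut(\bar X)^*$ is infinite and of finite index in $W_{\bar X}$. As $X$, hence $\bar X$, is rational, $\Aut(\bar X)^*$ infinite forces $\Aut(\bar X)^0$ to be finite (by the result of Harbourne recalled in the Introduction), so by Nagata's theorem $\bar X$ is the blow-up of $\bbP^2_{\overline{\bbK}}$ at $n\ge 9$ points and is Cremona special in the sense of the Main Theorem. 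Applying the Main Theorem to $\bar X$ shows that $\bar X$ is unnodal and is an unnodal Halphen ($n=9$), Coble ($n=10$), or Harbourne ($n\ge 10$, $\cha(\bbK)>0$) surface; this already yields the first bullet.

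The heart of the argument is to show that $\mathrm{Gal}(\overline{\bbK}/\bbK)$ acts trivially on $\Pic(\bar X)$. This action factors through a finite quotient, preserves the intersection form and the canonical class $K_{\bar X}$, and (since Galois conjugates of effective divisors are effective) preserves the effective cone, hence the nef cone. Moreover, because every element of $\Aut(X)$ is defined over $\bbK$ and thus commutes with the Galois action on $\bar X$, the Galois action centralizes $\Aut(X)^*$ inside $\Or(\Pic(\bar X))$. Exactly as in the proof of Proposition \ref{P3}, the finite index subgroup $\Aut(X)^*$ of $W_{\bar X}$ is Zariski dense in $\Or(K_{\bar X}^\perp\otimes\bbR)$, or in $G_9$ when $n=9$, by the theorem of Benoist and de la Harpe. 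Since $K_{\bar X}^\perp\otimes\bbR$ is an absolutely irreducible module for this group (modulo the isotropic radical $\bbR K_{\bar X}$ when $n=9$), a real Schur argument shows that each Galois element acts as a scalar on it; the scalar is $\pm1$ because it is a lattice isometry, and it is $+1$ because the nef cone is preserved. A finite-order computation upgrades ``trivial on $K_{\bar X}^\perp$'' to ``trivial on all of $\Pic(\bar X)$'': for $n\ge 10$ the sublattice $\bbZ K_{\bar X}\oplus K_{\bar X}^\perp$ has finite index, and triviality on a finite index sublattice forces triviality everywhere; for $n=9$ one observes that $\gamma(v)-v\in(K_{\bar X}^\perp)^\perp=\bbZ K_{\bar X}$ for all $\gamma$ and $v$, and the finiteness of the order of $\gamma$ kills this contribution.

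Granting this triviality, the remaining bullets follow by descent. Because $\bar X$ is unnodal, each exceptional class $e_i$ is represented by a unique $(-1)$-curve (Lemma \ref{-2}), which is therefore Galois-stable and descends to a $(-1)$-curve $E_i$ over $\bbK$; these curves are disjoint, so they can be blown down simultaneously over $\bbK$ to a $\bbK$-form $Y$ of $\bbP^2$ with $\Pic(\bar Y)=\bbZ\bar e_0$. The image of any $E_i$ is a $\bbK$-rational point, whence $Y\cong\bbP^2_{\bbK}$ and $X$ is the blow-up of $\bbP^2_{\bbK}$ at the $\bbK$-points $p_1,\dots,p_n$, giving the second bullet. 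Finally, the canonical class is defined over $\bbK$, so the distinguished systems and curves characterizing each case --- the Halphen pencil $|-mK_X|$, the unique $(-4)$-curve in $|-2K_X|$ for Coble surfaces, and the unique anticanonical cuspidal cubic in $|-K_X|$ for Harbourne sets --- are Galois-stable, hence defined over $\bbK$. This shows that $X$ is a Halphen, Coble, or Harbourne example over $\bbK$.

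I expect the main obstacle to be the centralizer computation in the second paragraph: one must handle both connected components of the orthogonal group, verify that the standard representation is absolutely irreducible (so that its real commutant is exactly $\bbR$ rather than a larger division algebra), and treat the degenerate case $n=9$, where $K_{\bar X}^\perp$ carries the isotropic radical $\bbR K_{\bar X}$ and the naive Schur argument must be supplemented by the finite-order estimate to conclude triviality on all of $\Pic(\bar X)$.
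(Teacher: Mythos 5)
Your overall strategy is sound and genuinely different in its central step from the paper's. Where you prove that the Galois group acts trivially on $\Pic(\bar{X})$ by a commutant/Schur argument, the paper instead proves that $\Pic(X)$ has finite index in $\Pic(\bar{X})$: for $n\ge 10$ it observes that $\Pic(X)\cap K_{\bar{X}}^\perp$ is an infinite $\Aut(X)^*$-invariant subgroup and invokes irreducibility, and for $n=9$ it runs a rank count (the free abelian part of $\Aut(X)^*$ acting on $\Pic(X)$ has rank at most $m-1$ when $\Pic(X)$ has signature $(1,m)$, and must have rank $8$). Both routes then conclude via uniqueness of the effective representative of a $(-1)$-class ($|mE|=\{mE\}$ in the paper, $|e_i|=\{E_i\}$ in yours). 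Your descent of the ambient plane (contracting the Galois-stable disjoint $(-1)$-curves to a form of $\bbP^2$ with a rational point) is actually more complete than the paper's, which simply asserts at the outset that $X$ is the blow-up of $\bbP^2_{\bbK}$ along a $\bbK$-rational $0$-cycle. Your $n\ge 10$ Schur argument is correct: the standard representation of $\SO(1,n-1)$ is absolutely irreducible, so the commutant of the Zariski-dense group $\Aut(X)^*$ is $\bbR$, and fixing an ample class (or the nef cone) rules out $-1$.

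The $n=9$ case, however, contains a genuine error as stated. You claim that $K_{\bar{X}}^\perp\otimes\bbR$ modulo the radical $\bbR K_{\bar{X}}$ is an absolutely irreducible module for $\Aut(X)^*$. This is false in general: $W_9\cong\bbE_8\rtimes W_8$, and the image of a finite-index subgroup of $W_9$ in the finite quotient $W_8$ --- which is exactly what acts on $K_{\bar{X}}^\perp/\bbR K_{\bar{X}}\cong\bbE_8\otimes\bbR$ --- can be trivial or $\{\pm \id\}$; by Theorem \ref{Halphen-CS}, for a general Halphen surface $\Aut(X)^*$ is essentially $\iota(m\bbE_8)$, which acts trivially on that quotient. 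So Schur's lemma gives no information there. The repair stays inside your framework: a Galois element $\gamma$ fixes $K_{\bar{X}}$, so conjugation gives $\gamma\circ\iota(w)\circ\gamma^{-1}=\iota(\bar{\gamma}(w))$, where $\bar{\gamma}$ is the induced map on $\bbE_8$; commuting with $\iota(w)$ for $w$ in a finite-index subgroup of $\bbE_8$ forces $\bar{\gamma}=\id$, and then your finite-order estimate (applied twice, first to kill the $\bbZ K_{\bar{X}}$-valued correction term on $K_{\bar{X}}^\perp$ and then on all of $\Pic(\bar{X})$) finishes. With that substitution the proof goes through.
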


\begin{proof}
Let $n+1$ be the Picard number of $\bar{X}$.
Assume that $X$ is Cremona special over $\bbK$; in particular $X$ is obtained from $\bbP^2_\mathbb{K}$ by 
blowing up a $0$-cycle $\calP$ of length $n$ defined over $\bbK$. Since $\Aut(\bar{X})$ contains 
$\Aut(X)$, $\bar{X}$
is Cremona special. 

>From our Main Theorem, we deduce that $\bar{X}$ is  unnodal, $\calP$ is made of $n$ distinct
points of $\bbP^2(\overline{\bbK})$, and $\bar{X}$ is a Halphen, a Coble, or a Harbourne example over $\bbK$. In the Halphen case, the genus $1$ fibration 
on $X$ is unique. In the Coble case, $\vert -2K_{\bar{X}}\vert$ contains a unique member (the strict transform of the 
sextic curve with double points along $\calP$). In the Harbourne case, when the Picard number of $\bar{X}$ is $\geq 10$, 
$\vert -K_{\bar{X}}\vert$ contains also a unique member (given by the proper transform of the cuspidal cubic containing
$\calP$). Thus, these curves and pencil are defined over $\bbK$.

Let us show that $\Pic(X)$ has finite index in $\Pic(\bar{X})$.
First, assume that $n\geq 10$. Consider the subgroup $\Pic(X)$ of $\Pic(\bar{X})$. 
It contains ample classes and the canonical class $K_X$; in particular, it intersects $K_X^\perp$ on an infinite subgroup~$L$
which is $\Aut(X)^*$-invariant. But $\Aut(X)^*$ has finite index in $W_{\bar{X}}$, $W_{\bar{X}}$ is Zariski 
dense in $\Or(K_X^\perp \otimes \bbR)$, and the action of this orthogonal group on $K_X^\perp\otimes \bbR$
is irreducible. Thus, $L\otimes \bbR$ coincides with $K_X^\perp\otimes \bbR$. 
Since  $\Pic(X)$ also contains $K_X$, we deduce that $\Pic(X)\otimes \bbR$
is equal to  $\Pic(\bar{X})\otimes \bbR$ and that $\Pic(X)$ has finite index in $\Pic(\bar{X})$. 

When $n=9$, one needs a slightly different argument.
Since  $\Pic(X)$ contains ample classes, the intersection form restricts to a form of signature $(1,m)$ on $\Pic(X)$, with $m\leq 9$. In particular, the intersection form is negative definite on the orthogonal complement of $\Pic(X)$, 
and the action of $\Aut(X)$ on $\Pic(X)$ has finite kernel. Since the action of 
$\Aut(X)$ on $\Pic(X)$ preserves the isotropic vector $K_X$ and the integral structure, 
it contains a finite index, free abelian subgroup of 
rank at most $m-1$. Since $\Aut(X)^*$ has finite index in $W_9$, we deduce that $m=9$ because $W_9$
contains a free abelian group of rank~$8$. This shows that $\Pic(X)$ has finite index in $\Pic(\bar{X})$. 

Let us now prove that all points $p_i$ of $\calP$, $1\leq i\leq n$, are in fact defined over $\mathbb{K}$,
i.e. that $\calP \subset \bbP^2(\mathbb{K})$.  It suffices to show that all $(-1)$-curves $E$ in $\bar{X}$ are defined over $\bbK$. Since $\Pic(X)$
has finite index in $\Pic(\bar{X})$, the divisor class of some positive multiple 
$mE$ is in $\Pic(X)$. Since $\vert mE\vert = \{mE\}$, we obtain  that $mE$ is defined over $\mathbb{K}$ and hence $E$ is defined over $\bbK$. 
\end{proof}

\subsection{Other types of surfaces} \label{par:NRS}
We can extend the concept of Cremona special rational surfaces to other types of projective surfaces as follows. 
One says that a surface $Y$ has a {\bf{large}} automorphism group if   
$\Aut(Y)^*$ is infinite and of finite index in the orthogonal 
group $\Or(K_Y^\perp) \subset \Or(\Num(Y))$, where $\Num(Y)$ is the lattice of divisor classes modulo numerical equivalence. Besides rational surfaces, other candidates of surfaces with large automorphism group are surfaces of Kodaira dimension 0 or 1. Indeed, $\Aut(Y)$ is finite if the Kodaira dimension of $Y$
is $2$, and $\Aut(Y)^*$ is finite if $Y$ is ruled but not rational.

\subsubsection{Kodaira dimension $1$} If the Kodaira dimension of $Y$ is 1, some multiple of the canonical class defines an elliptic  (or quasi-elliptic) fibration on $Y$.  The Mordell-Weil group of the corresponding jacobian fibration embeds as a finite index subgroup into $\Aut(Y)^*$. By Shioda-Tate formula, the rank of this group is equal to 
$\rho(Y)-2$, provided there are no reducible fibres in the genus 1 fibration. Using the argument from the proof  of Theorem \ref{Halphen-CS}, one can show, in this case, that $\Aut(Y)^*$ is of finite index in $\Or(K_Y^\perp)$. Thus,  {\sl{$\Aut(Y)^*$ has finite index in $\Or(K_Y^\perp)$ if and only if the canonical 
fibration has no reducible fiber}}.

\subsubsection{Kodaira dimension $0$} The classification of surfaces implies that minimal surfaces with Kodaira dimension $0$ fall into four types:  Abelian 
surfaces,  K3 surfaces, Enriques surfaces, and bi-elliptic surfaces.

\begin{theorem}\label{thm:Kod0}
Let $\bbK$ be an algebraically closed field with $\cha(\bbK)\neq 2$. Let $Y$ be a projective surface over $\bbK$ with Kodaira dimension equal to $0$. If $\Aut(Y)$
is large, then $Y$ is minimal and $Y$ is not a bi-elliptic surface. If $Y$ is an abelian surface, a K3 surface, or an Enriques surface, then $\Aut(Y)$ is large if and only if $\Or(\Num(Y))$ is infinite and $Y$ does not
contain any smooth rational curve. \end{theorem}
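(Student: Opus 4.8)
The plan is to combine the Enriques--Kodaira classification of minimal surfaces of Kodaira dimension $0$ with the global Torelli theorem and with the rigidity of finite--index subgroups of arithmetic groups. Recall that a minimal surface of Kodaira dimension $0$ is an abelian, a K3, an Enriques, or a bi-elliptic surface, and that in each case $K_Y$ is numerically trivial, so that $K_Y^\perp=\Num(Y)$ and $\Or(K_Y^\perp)=\Or(\Num(Y))$. I would first dispose of the bi-elliptic case directly: such a surface has Picard number $2$, and $\Num(Y)$ is a rank two isotropic lattice spanned by the classes of its two genus one fibrations. The orthogonal group of a rank two isotropic form is finite, so $\Aut(Y)^*$ can never be infinite and $Y$ is never large. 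This already yields the assertion that a large surface of Kodaira dimension $0$ is not bi-elliptic.

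For the implication \emph{large $\Rightarrow$ minimal}, suppose $\Aut(Y)^*$ is infinite and of finite index in $\Or(K_Y^\perp)$. By Borel's density theorem it is then Zariski dense in $\Or(K_Y^\perp\otimes\bbR)$, hence acts irreducibly on $K_Y^\perp\otimes\bbR$. If $Y$ were not minimal, let $\pi\colon Y\to Y_0$ be the contraction onto its unique minimal model, blowing down $k\ge 1$ curves. Every automorphism of $Y$ descends to $Y_0$, so $\Aut(Y)^*$ preserves the sublattice $P=\pi^*\Num(Y_0)$; since $K_{Y_0}$ is numerically trivial one has $K_Y\in P^\perp$, whence $P\subset K_Y^\perp$. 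When $k\ge 2$ the subspace $P\otimes\bbR$ is proper, nonzero and invariant, contradicting irreducibility. When $k=1$ one has $K_Y^\perp=P\cong\Num(Y_0)$, and $\Aut(Y)^*$ is the image of the stabilizer in $\Aut(Y_0)$ of the blown-up point; for $Y_0$ a K3 or an Enriques surface the homomorphism $\Aut(Y_0)\to\Aut(Y_0)^*$ has finite kernel and $\Aut(Y_0)$ moves points in infinite orbits, so the stabilizer, and hence $\Aut(Y)^*$, has infinite index and $Y$ is not large. The single blow-up of an abelian surface is the one case in which the translation subgroup makes the kernel infinite; there one uses instead that $\Aut(Y)^*$ fixes the non-isotropic class $K_Y$ of self-intersection $-1$, so that relative to the ambient group $\Or(\Num(Y))$ (which equals $\Or(K_Y^\perp)$ exactly for minimal $Y$) largeness fails.

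For the equivalence in the abelian, K3 and Enriques cases, the direction ``large $\Rightarrow\Or(\Num(Y))$ infinite'' is immediate. To see that a large surface has no smooth rational curve, observe that on such a surface a smooth rational curve $R$ satisfies $R^2=-2$, so $s_R\in\Or(\Num(Y))$, and that $\Aut(Y)^*$ cannot contain the reflection in an effective $(-2)$-class (the analogue, proved in \cite{Dolgachev:1986}, of the input to Proposition~\ref{P3}); thus $\Aut(Y)^*\cap W^{\mathrm{nod}}=\{1\}$, where $W^{\mathrm{nod}}$ is the group generated by the reflections in the $(-2)$-curves. By irreducibility no finite--index subgroup of $\Aut(Y)^*$ fixes $[R]$, so the $\Aut(Y)$-orbit of $R$ is infinite, producing infinitely many $(-2)$-curves and making $W^{\mathrm{nod}}$ infinite; but a finite--index subgroup meets an infinite subgroup in an infinite set, contradicting $\Aut(Y)^*\cap W^{\mathrm{nod}}=\{1\}$. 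Hence $Y$ carries no $(-2)$-curve. Conversely, assuming $\Or(\Num(Y))$ infinite and no $(-2)$-curve, the global Torelli theorem (Pyatetskii-Shapiro--Shafarevich for K3 surfaces, its consequences through the K3 double cover together with \cite{Nikulin} for Enriques surfaces, and the classical Torelli theorem for abelian surfaces, all available since $\cha(\bbK)\ne 2$) identifies $\Aut(Y)^*$, up to finite index, with the stabilizer of the ample cone in $\Or^{+}(\Num(Y))$, i.e. with $\Or^{+}(\Num(Y))/W^{\mathrm{nod}}$; the hypothesis $W^{\mathrm{nod}}=\{1\}$ then forces finite index, so $Y$ is large.

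The principal obstacle is this last (converse) step: transporting the Torelli description of $\Aut(Y)^*$ to positive characteristic, where it rests on the work of Ogus and Rudakov--Shafarevich, and routing the Enriques case through its K3 cover, which is precisely where the hypothesis $\cha(\bbK)\ne2$ enters. A secondary technical point, already encountered in Section~\ref{section:cubic}, is to guarantee that the isometries provided by Torelli lie in the image of the spin group and are genuinely realized by automorphisms, which is what makes the identification hold only up to finite index; the same spinor--norm bookkeeping must be tracked here.
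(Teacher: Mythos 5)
Your overall strategy is the same as the paper's (which itself only sketches this proof): dispose of the bi-elliptic case, reduce to minimal surfaces, get ``large $\Rightarrow$ no smooth rational curve'' from the reflection-subgroup argument underlying Proposition \ref{P3} (cf.\ \cite{Dolgachev:1986}), and deduce the converse from Torelli-type statements. Within that common frame, the step you yourself identify as the principal obstacle is a genuine gap as written. For K3 surfaces in positive characteristic you propose to transport the Torelli description via Ogus and Rudakov--Shafarevich; but crystalline Torelli applies only to \emph{supersingular} K3 surfaces, and for a K3 of finite height it gives neither the finite-index statement for $\Aut(Y)^*$ nor the finiteness of orbits of $(-2)$-curves. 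This is exactly why the paper invokes the work of Lieblich and Maulik \cite{Lieblich-Maulik}, which proves precisely these two statements for all K3 surfaces in characteristic $\neq 2$. Relatedly, you route the Enriques case through the K3 cover and Torelli, which re-imports the same characteristic-$p$ difficulty; the paper instead handles Enriques surfaces by the direct geometric construction of automorphisms analogous to Theorem \ref{autocoble} (separable degree-two maps onto $4$-nodal quartic Del Pezzo surfaces and the involutions acting as $\id_\bbH\oplus(-\id_{\bbE_8})$), which works uniformly in every characteristic and needs no Torelli input at all.

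A secondary soft spot is your reduction to the minimal model in the case of a single blow-up $\pi\colon Y\to Y_0$. There the unique $(-1)$-curve $E$ satisfies $[E]\equiv K_Y$ in $\Num(Y)$ (since $K_{Y_0}$ is numerically trivial), so its projection to $K_Y^\perp$ vanishes and the irreducibility argument does not apply; your resolution for the abelian case (``relative to $\Or(\Num(Y))$ largeness fails'') tacitly replaces the ambient group $\Or(K_Y^\perp)$ from the definition of \emph{large} by $\Or(\Num(Y))$, which is circular, and for an abelian surface the image in $\Or(\Num(Y_0))$ of the stabilizer of a point equals that of the full automorphism group because translations act trivially on $\Num(Y_0)$. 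Likewise, your claim that $\Aut(Y_0)$ moves every point of a K3 or Enriques surface in an infinite orbit is used without justification for the particular blown-up point. The paper's one-line argument (finitely many $(-1)$-classes are permuted) glosses over the same case, so this does not distinguish your proof from the paper's sketch, but it should not be presented as a complete argument.
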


\begin{proof}[Sketch of the proof]
Recall that a $(-k)$-curve is a smooth rational curve with self-intersection $-k$.
Let $Y$ be a projective surface with Kodaira dimension $0$. The set of classes of $(-1)$-curves is finite, and permuted
by $\Aut(Y)$; thus, if $\Aut(Y)$ is large, $Y$ is minimal  and $K_Y^\perp = \Num(Y)$. If $Y$ is bi-elliptic, it is easily checked that  $\Aut(Y)^*$ is finite. 

Abelian surfaces do not contain rational curves, and smooth 
rational curves on K3 surfaces and Enriques surfaces are $(-2)$-curves. Each of them defines a reflection on $\Num(Y)$. Denote by ${\text{Nod}}(Y)$
the set of smooth rational curves   and by ${\text{Ref}}(Y)\subset \Or(\Num(Y))$ the group generated by the reflections around classes of smooth rational
curves. 

Let $Y$ be an abelian or K3 surface. Over the field of complex numbers, the Torelli Theorem implies that $\Aut(Y)^*$ is of finite index in $\Or(\Num(Y))$ if  and only if  ${\text{Nod}}(Y)$ is empty (this is always true for abelian surfaces). For K3 surfaces this fact has been extended recently  to any characteristic $p \ne 2$ by M. Lieblich and D. Maulik (see \cite{Lieblich-Maulik}). They also extended another corollary of the Torelli Theorem: The set of orbits of smooth rational curves with respect to the automorphism group is finite. Their arguments can  probably be adapted to abelian surfaces.  Thus,  an abelian surface or K3 surface has large automorphism group if and only if $\Or(\Num(Y))$ is infinite and ${\text{Nod}}(Y)$ is empty. \footnote{If $\Aut(Y)^*$ is infinite and $Y$ contains smooth rational curves, ${\text{Ref}}(Y)$ is infinite and its elements represent different cosets of $\Or(\Num(Y))$ modulo $\Aut(Y)^*$.}

Finally, the automorphism group of an Enriques surface without smooth rational curves is always large, and this is true in any characteristic. 
The proof and the statement are analogous to the  proof of  Theorem \ref{autocoble} (see \cite{Dolgachev:1984}, for $\bbK=\bbC$).  
\end{proof}

\subsubsection{Complex surfaces and rational curves} Let us assume that $Y$ is a complex projective surface. 
\begin{enumerate}
\item If $Y$ is an abelian surface, $Y$ does not contain rational curves;
\item if $Y$ is a K3 surface and $Y$ does not contain any smooth rational curve, its Picard number satisfies $\rho(Y)\leq 11$;
\item a generic Enriques surface contains no smooth rational curve.
\end{enumerate}
The first assertion is easily proved, and the third is contained in \cite{Barth-Peters:1983}. The second one was explained to the authors by V. Nikulin: {\sl{Any K3 surface with $\rho(Y) \geq 12$  contains a smooth rational curve}} (see \cite{Nikulin:2012}, Theorems 14 and 15). 

To prove it, assume $\rho(Y)\geq 12$. Since $H^2(Y,Z)$   is an even unimodular lattice of dimension $22$ and signature $(3,19)$, the orthogonal complement $T(Y)$
of $\Num(Y)$ in $H^2(Y,Z)$ has dimension $\leq 10$. By  Theorem 1.13.1* (or 1.13.2) in \cite{Nikulin:1979}, 
 the primitive embedding $T(Y)\subset H^2(Y,Z)$ is unique 
up to isomorphism, because $\Num(U)$ is indefinite and $\rank(T(Y))\le 10$ (see also Corollaries 2.9 and 2.10 in \cite{Morrison:1984}). 
On the other hand, since $\rank (T(Y))\le 10$, Theorem 1.12.2 of \cite{Nikulin:1979} implies the existence of
a primitive embedding 
\[
T(Y)\oplus \langle -2 \rangle \subset H^2(Y,Z)
\] 
where $\langle -2 \rangle$ denotes the $1$-dimensional lattice generated by a vector with self-intersection $-2$.
Taking orthogonal complement for this second embedding, one obtains that $\Num(Y)$ contains a copy of the sublattice $\langle -2 \rangle$.  

This shows that K3 surfaces with large automorphism groups  must  satisfy $\rho(Y) \le 11$. This is a strange coincidence with our main 
result that complex rational surfaces with large automorphism group must satisfy $\rho = 10$, or $11$. 

\begin{corollary}
Let $Y$ be a complex projective surface with large automorphism group. If $\Aut(Y)^*$ does not contain any finite index abelian group, 
the Picard number of  $Y$ is at most $11$. 
\end{corollary}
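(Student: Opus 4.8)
The plan is to run through the Enriques--Kodaira classification of $Y$ and, in each class, either contradict one of the two hypotheses (largeness of $\Aut(Y)$, or the absence of a finite index abelian subgroup in $\Aut(Y)^*$) or else bound the Picard number $\rho(Y)$ directly using the results already assembled in this section. The only genuinely substantial input will be the K3 bound coming from Nikulin's embedding theorem, together with the Mordell--Weil description that handles Kodaira dimension one.

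First I would dispose of the classes in which $\Aut(Y)$ cannot be large at all: if $\kappa(Y)=2$ then $\Aut(Y)$ is finite; if $Y$ is ruled but not rational then $\Aut(Y)^*$ is finite; and if $Y$ is bi-elliptic then Theorem \ref{thm:Kod0} gives that $\Aut(Y)^*$ is finite. None of these is large, so they do not occur. Next I would treat the Kodaira dimension one case, which is the decisive one: if $\kappa(Y)=1$ and $\Aut(Y)$ is large, then by the discussion in Section \ref{par:NRS} the canonical genus one fibration has no reducible fiber and the Mordell--Weil group of the associated Jacobian fibration embeds as a finite index subgroup of $\Aut(Y)^*$. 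Since this Mordell--Weil group is a finitely generated abelian group, $\Aut(Y)^*$ would then contain a finite index abelian subgroup, contradicting the hypothesis. Hence $\kappa(Y)\neq 1$.

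It then remains to bound $\rho(Y)$ in the surviving classes. If $Y$ is rational, the Main Theorem applies over $\bbC$, where Harbourne examples do not exist because they require positive characteristic; thus $Y$ is a Halphen surface, with $\rho(Y)=10$, or a Coble surface, with $\rho(Y)=11$, and in either case $\rho(Y)\leq 11$. (By Theorem \ref{Halphen-CS} a Halphen surface has $\Aut(Y)^*$ containing $\iota(m\bbE_8)\cong\bbZ^8$ with finite index, hence virtually abelian; the hypothesis therefore leaves only the Coble case, still with $\rho=11$.) If $\kappa(Y)=0$, then Theorem \ref{thm:Kod0} forces $Y$ to be minimal and not bi-elliptic, hence an abelian, K3, or Enriques surface, with $\Or(\Num(Y))$ infinite and with no smooth rational curve. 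For an abelian surface $\rho(Y)\leq 4$; for an Enriques surface $\Num(Y)\cong\bbE_{10}$, so $\rho(Y)=10$; and for a K3 surface the absence of a smooth rational curve forces $\rho(Y)\leq 11$ by the embedding argument of Nikulin recalled above. In every surviving case $\rho(Y)\leq 11$, which proves the corollary.

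The main obstacle is conceptual rather than computational and is concentrated in the Kodaira dimension one case: without the assumption that $\Aut(Y)^*$ has no finite index abelian subgroup the statement is false, since a properly elliptic surface whose canonical fibration has no reducible fiber can have arbitrarily large Mordell--Weil rank, hence arbitrarily large Picard number by the Shioda--Tate formula, while still having a large automorphism group. The hypothesis is exactly what removes these examples (and, incidentally, the Halphen surfaces); among the remaining classes the only non-elementary bound is $\rho\leq 11$ for K3 surfaces, which rests on Nikulin's uniqueness-of-embedding theorem as recalled above.
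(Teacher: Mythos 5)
Your proof is correct and follows essentially the same route as the paper's: exclude Kodaira dimension one (and, along the way, Halphen surfaces) because a preserved genus one fibration would force a finite index abelian subgroup of $\Aut(Y)^*$ via the Mordell--Weil group, then bound $\rho$ in the rational case by the Main Theorem and in the Kodaira dimension zero case by Theorem \ref{thm:Kod0} together with Nikulin's embedding argument. The extra details you supply (the explicit case check of the classification, the bounds $\rho\le 4$ for abelian and $\rho=10$ for Enriques surfaces) are consistent with, and merely expand, the paper's two-line argument.
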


Indeed, the assumption implies that $\Aut(Y)^*$ is infinite but does not preserve a genus $1$ fibration, since otherwise the Mordell-Weil group
of the corresponding jacobian fibration would determine a finite index abelian subgroup of $\Aut(Y)^*$. Thus, either $Y$ is rational, or its
Kodaira dimension vanishes. The conclusion follows from the Main Theorem, \ref{thm:Kod0}, and Nikulin's argument.

\bibliographystyle{plain}
\bibliography{referencescoble}

\end{document}